\title[On the Howe correspondence...]{On the Howe correspondence and Harish-Chandra series}
\author{Jesua Epequin Chavez}
\date{}
\DeclareMathOperator{\End}{End}
\DeclareMathOperator{\Hom}{Hom}
\DeclareMathOperator{\U}{U}
\DeclareMathOperator{\Or}{O}
\DeclareMathOperator{\GL}{GL}
\DeclareMathOperator{\Sp}{Sp}
\DeclareMathOperator{\Irr}{Irr}
\DeclareMathOperator{\Ind}{Ind}
\DeclareMathOperator{\SO}{SO}
\DeclareMathOperator{\sgn}{sgn}
\DeclareMathOperator{\St}{St}
\DeclareMathOperator{\diag}{diag}
\DeclareMathOperator{\rank}{rank}
\DeclareSymbolFont{symbols2}{LS1}{stixfrak} {m} {n}
\DeclareMathSymbol{\operp}{\mathbin}{symbols2}{"A8}
\tikzset{labr/.style={anchor=north, rotate=90, inner sep=1mm}}
\tikzset{labl/.style={anchor=south, rotate=90, inner sep=.5mm}}
\newtheorem{thm}{Theorem}
\newtheorem{defi}{Definition}
\newtheorem{lem}{Lemma}
\newtheorem{prop}{Proposition}
\newtheorem{cor}{Corollary}
\newtheorem*{thm*}{Theorem}
\begin{document}
\begin{abstract}
We study the effect of the Howe correspondence on Harish-Chandra series for type I dual pairs $(\U_m(\mathbb{F}_q),\U_n(\mathbb{F}_q))$ and $(\Sp_{2m}(\mathbb{F}_q),\Or^\pm_{2n}(\mathbb{F}_q))$, where $\mathbb{F}_q$ denotes the finite field with $q$ elements ($q$ odd). We use the Lusztig correspondence to reduce the study of the Howe correspondence between Harish-Chandra series to a correspondence between unipotent Harish-Chandra series. Finally, we provide two applications of these results.
\end{abstract}
\maketitle
\section*{Introduction}
Let $\mathbb{F}_q$ be a finite field with $q$ elements and odd characteristic. A pair of reductive subgroups of $\Sp_{2n}(q)$, where each one is the centralizer of the other, is called \emph{reductive dual pair}. We focus our attention on \emph{irreducible} dual pairs (cf. \cite{Kudla}). One such pair can be \emph{linear} $(\GL_m(q),\GL_{m'}(q))$, \emph{unitary} $(\U_m(q),\U_{m'}(q))$ or \emph{symplectic-orthogonal} $(\Sp_{2m}(q),\Or_{m'}(q))$, with $n=mm'$ in all cases. The last two are also called \emph{type I} dual pairs.

For a reductive dual pair $(G_m,G'_{m'})$, Roger Howe defined the a correspondence $\Theta_{m,m'} : \mathscr{R}(G_m)\rightarrow\mathscr{R}(G'_{m'})$ between the categories of complex representations. This correspondence is known as the \emph{Howe correspondance}, and arises from the restriction to $G_m\times G'_{m'}$ of the \emph{Weil representation} $\omega$ of the symplectic group $\Sp_{2n}(q)$ (cf. \cite{Howe}). 

Let $\mathbf{G}$ be a connected group defined over $\mathbb{F}_q$, and $\mathbf{G}^*$ its dual group. Denote by $G$ and $G^*$ their groups of rational points. In \cite{Lusztig3} Lusztig defined a partition of the set $\Irr(G)$ of irreducible representations of $G$ into \emph{Lusztig series} $\mathscr{E}(G,(s))$. These series are parametrized by rational conjugacy classes of semisimple elements $s$ of $G^*$. The elements of $\mathscr{E}(G,(1))$ are called \emph{unipotent representations} of $G$.

In general, the Howe correspondence is not compatible with unipotent representations. Therefore, we make use of a similar correspondance $\Theta^\flat_{m,m'}:\mathscr{R}(G_m)\rightarrow\mathscr{R}(G'_{m'})$ arising from a Weil representation $\omega^\flat$ introduced by G\'erardin in \cite{Gerardin}. This correspondence preserves unipotent representations (cf. Proposition 2.3 in \cite{AMR}).

Let $G_m$ belong to a type I dual pair. \emph{Harish-Chandra series} for this group are indexed by cuspidal pairs $(\GL_\textbf{t}\times G_{m-|\textbf{t}|},\boldsymbol{\sigma}\otimes\varphi)$, where $\mathbf{t}=(t_1,\ldots,t_r)$ is a partition of norm $|\mathbf{t}|\leq m$, $\GL_\textbf{t}=\GL_{t_1}\times\cdots\times\GL_{t_r}$, and $\boldsymbol{\sigma}$ (resp. $\varphi$) is an irreducible cuspidal representations of $\GL_\textbf{t}$ (resp. $G_{m-|\textbf{t}|}$). Let $\varphi'$ (resp. $l'$) denote the first occurrence (resp. first occurrence index) of $\varphi$ (cf. \cite{AM}). Our first main result generalizes Theorem 3.7 in \cite{AMR} (see Theorem \ref{HoweHarish-Chandra}). 

\begin{thm*}
The Howe correspondence $\Theta_{m,m'}^\flat$ sends $\Irr(G_m,\boldsymbol{\sigma}\otimes\varphi)$ to $\mathscr{R}(G'_{m'},\boldsymbol{\sigma}'\otimes\varphi')$ whenever $m'\geq l'$ and to zero otherwise. In the first case, $\boldsymbol{\sigma}'=\boldsymbol{\sigma}\otimes 1$ if $|\mathbf{t}'|\geq |\mathbf{t}|$, and $\boldsymbol{\sigma}=\boldsymbol{\sigma}'\otimes 1$ otherwise. 
\end{thm*}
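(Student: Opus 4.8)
The plan is to reduce the general statement to the unipotent case via the Lusztig correspondence, exactly as suggested by the abstract, and then invoke (a suitably unified form of) Theorem 3.7 in \cite{AMR}. First I would fix a Harish-Chandra series $\Irr(G_m,\boldsymbol{\sigma}\otimes\varphi)$ with cuspidal datum supported on $\GL_{\mathbf t}\times G_{m-|\mathbf t|}$. The representation $\boldsymbol{\sigma}$ of $\GL_{\mathbf t}$ determines a semisimple element whose $\GL$-part is some $s$, and $\varphi$ lies in a Lusztig series $\mathscr{E}(G_{m-|\mathbf t|},(t))$ for a semisimple $t$ in the dual group. Combining these gives a semisimple class $(s_0)$ in $G_m^\ast$, and the whole Harish-Chandra series sits inside the Lusztig series $\mathscr{E}(G_m,(s_0))$. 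The key structural input is that the Lusztig correspondence intertwines $\mathscr{E}(G_m,(s_0))$ with the unipotent series $\mathscr{E}(C_{G_m^\ast}(s_0),(1))$ of the centralizer, and — crucially — that it is compatible with Harish-Chandra induction (Bonnafé–Dat–Rouquier, or the relevant references the paper cites), so that a Harish-Chandra series in $G_m$ corresponds to a unipotent Harish-Chandra series in $C_{G_m^\ast}(s_0)$, which is again (a product of general linear factors with) a smaller classical group of the same type.

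Second, I would track what $\Theta^\flat_{m,m'}$ does at the level of Lusztig series. The correspondence $\Theta^\flat$ is built from G\'erardin's Weil representation $\omega^\flat$, and by Proposition 2.3 of \cite{AMR} it preserves unipotent representations; more generally one knows (and this is presumably established earlier in the paper, so I may assume it) that $\Theta^\flat_{m,m'}$ sends $\mathscr{E}(G_m,(s_0))$ into a specific Lusztig series $\mathscr{E}(G'_{m'},(s_0'))$ of the partner group, where $s_0'$ is obtained from $s_0$ by the explicit "dual pair" recipe on semisimple classes, and that under the two Lusztig correspondences $\Theta^\flat$ becomes (a product of elementary $\GL$-maps with) the unipotent $\flat$-Howe correspondence between the centralizers $C_{G_m^\ast}(s_0)$ and $C_{G_{m'}^{\prime\ast}}(s_0')$. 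Granting this, the problem collapses to the unipotent statement: the unipotent $\flat$-Howe correspondence sends the unipotent Harish-Chandra series attached to a unipotent cuspidal $\varphi_{\mathrm{un}}$ to the one attached to its first occurrence, with the $\GL$-cuspidal part going to $\boldsymbol{\sigma}\otimes 1$ or losing a trivial factor according to the sign of $|\mathbf t'|-|\mathbf t|$ — this is precisely Theorem 3.7 of \cite{AMR}, or its mild generalization to all (not necessarily split) Witt types, which the present paper's Theorem \ref{HoweHarish-Chandra} is advertised to provide.

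Third, I would verify the first-occurrence bookkeeping. The cuspidal pair for $G_m$ has a $G$-part $\varphi$; its first occurrence $\varphi'$ and index $l'$ are computed purely inside the chain of classical groups obtained from the $G$-parts of the centralizers, so the condition $m'\ge l'$ and the conclusion "$\Theta^\flat$ lands in $\mathscr{R}(G'_{m'},\boldsymbol{\sigma}'\otimes\varphi')$, zero otherwise" transfers verbatim from the unipotent case once one checks that passing to Lusztig series does not disturb first occurrence — i.e. that the first occurrence of $\varphi$ and of its unipotent counterpart $\varphi_{\mathrm{un}}$ occur at compatible indices, shifted by the (fixed) contribution of the $\GL$-part $s$. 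The determination of $\boldsymbol{\sigma}'$ is then just the $\GL$-factor computation: stabilizing/destabilizing by a trivial character depending on whether $|\mathbf t'|\ge|\mathbf t|$, which one reads off from the explicit action of $\Theta^\flat$ on the linear part of the dual pair.

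The main obstacle I expect is the second step: making precise, and proving, that $\Theta^\flat_{m,m'}$ is genuinely compatible with the Lusztig correspondence in the strong sense that it decomposes as a product of a $\GL$-type Howe map (on the $s$-parts) with the unipotent $\flat$-Howe map (on the centralizers), \emph{and} that this decomposition is compatible with Harish-Chandra induction on both sides. The subtlety is that the Lusztig correspondence is only canonical up to choices, Harish-Chandra series can split under it, and one must control how the Weil representation's restriction interacts with the Levi subgroups appearing in the cuspidal support; handling the orthogonal groups $\Or^\pm_{2n}$ (the two Witt types, and the disconnected group versus $\SO$) is where the signs and the possible ambiguity in $\boldsymbol{\sigma}'$ genuinely require care. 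Everything else — the $\GL$ computation, the first-occurrence transfer, and the final assembly — should be routine once that compatibility is in hand.
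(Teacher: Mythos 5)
Your plan runs in the opposite direction from the actual logical structure needed here, and the step you yourself flag as the ``main obstacle'' is a genuine gap, not a technicality. The statement is claimed for \emph{all} type I pairs, i.e.\ for $(\U_m,\U_n)$ \emph{and} $(\Sp_{2m},\Or^\pm_{2n})$. The strong compatibility you want to assume in your second step --- that under the two Lusztig correspondences $\Theta^\flat_{m,m'}$ factors as $\mathbf{R}_{G_\#(s)}$ on the non-trivial-eigenvalue part times the unipotent $\flat$-correspondence on the centralizer part --- is Th\'eor\`eme 2.6 of \cite{AMR}, which is available only for linear and unitary pairs; the paper explicitly restricts its Lusztig-correspondence analysis (Section \ref{HoweLusztigCorrespondence} onward) to unitary pairs and only remarks that an extension to symplectic-orthogonal pairs should follow from Pan's recent work. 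So your reduction cannot reach the symplectic-orthogonal half of the statement at all. Moreover, even for unitary pairs, transporting \emph{Harish-Chandra series} (not just Lusztig series) through the Lusztig correspondence, and identifying which cuspidal pair $(\mathbf{L}',\rho')$ the image series is attached to, is exactly the content of Proposition \ref{DecompositionCuspidalPairViaXi} and Theorem \ref{HC-UnipotentHC} --- and the paper \emph{proves those using the theorem you are trying to prove} (Theorem \ref{HoweHarish-Chandra} is invoked to pin down $L$, $L'$, $\rho$, $\rho'$). Following your route inside this framework is therefore circular, and your third step (``first-occurrence bookkeeping transfers verbatim'') quietly assumes the very matching of cuspidal data that has to be established.

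The paper's proof is entirely different and avoids the Lusztig correspondence. It proves Theorem \ref{HoweCuspidalSupport} on cuspidal supports directly: the mixed Schr\"odinger model gives a filtration of the Jacquet modules $\prescript{*}{}{R}_k(\omega^\flat_{m,m'})$ whose graded pieces are $\Ind\,\mathbf{R}_{\GL_i}\otimes\omega^\flat_{m-k,m'-i}$ (Theorem \ref{coinv-filtr}, Corollary \ref{coinv-submod}); then Frobenius reciprocity plus cuspidality kills all but the $i=0$ and $i=t_1$ terms, and an induction on the cuspidal support yields both cases $|\mathbf{t}'|\geq|\mathbf{t}|$ and $|\mathbf{t}'|<|\mathbf{t}|$, uniformly for unitary and symplectic-orthogonal pairs. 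The theorem in question is then an immediate restatement. If you want to salvage your approach, you would have to prove the Lusztig--Howe compatibility for Harish-Chandra series independently (including the orthogonal case and the control of how first occurrence behaves under the Lusztig correspondence), which is essentially the paper's \emph{second} main theorem and strictly harder than the statement at hand.
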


From now on we restrict our attention to unitary dual pairs. Let $\rho$ and $\rho'$ be the cuspidal representations in the previous theorem, so that $\Theta^\flat_{m,m'}$ sends the Harish-Chandra series $\Irr(G_m,\rho)$ of $G_m$ to the series $\Irr(G'_{m'},\rho')$ of $G'_{m'}$. The second goal of this paper is to express this correspondence between Harish-Chandra series in terms of a correspondence between unipotent Harish-Chandra series. The latter was studied by of Aubert, Michel and Rouquier in \cite{AMR}, they reduced it to a combinatorial problem. 

 In order to achieve this goal we make use of the \emph{Lusztig correspondence}. It is a bijection between the series $\mathscr{E}(G,(s))$ and the series $\mathscr{E}(C_{G^*}(s),(1))$, sending a representation $\rho$ of $G$ to a unipotent representation $\rho_u$ of the centralizer $C_{G^*}(s)$. For classical groups, this centralizer can be expressed as a product of smaller reductive groups. For instance, when $G_m$ is a unitary group 
$$
C_{G_m}(s) \simeq G_\#(s)\times G_{m-l}
$$
where $G_\#(s)$ is a product of linear or unitary groups whose rank is equal to $l$. This decomposition allows us to identify the series $\mathscr{E}(G_m,(s))$ with the direct product of $\mathscr{E}(G_\#(s),(1))$ and $\mathscr{E}(G_{m-l},(1))$. We denote by $\pi_\#\otimes\pi_{m-l}$ the representation correponding to $\pi$ under this bijection.



Denote by $\omega^\flat_{m,m',\rho}$ the restriction of $\omega^\flat_{m,m'}$ to $\mathscr{R}(G_m,\rho)\otimes\mathscr{R}(G'_{m'},\rho')$, by $\mathbf{R}_G$ the natural representation of $G\times G$ on $\mathscr{L}(G)$ (see Section \ref{PreliminariesRepresentationTheory}), and let $\mathbf{R}_{G,\rho}$ denote its projection onto $\mathscr{R}(G,\rho)\otimes\mathscr{R}(G,\rho)$.  Our second main result is the following (see Theorem \ref{HC-UnipotentHC}).

\begin{thm*}
The representation $\omega^\flat_{m,m',\rho}$ is identified with $\mathbf{R}_{G_\#(s),\rho_\#}\otimes \omega^\flat_{m-l,m'-l,\rho_{m-l}}$  via the bijection
\begin{align*}
\Irr(G_m\times G'_{m'},\rho\otimes\rho')\simeq \Irr(C_{G_m}(s)\times C_{G'_{m'}}(s'),\rho_u\otimes \rho'_u),
\end{align*}
where $s$ and $s'$ are rational semisimple elements of $L$ and $L'$ whose Lusztig series contain $\rho$ and $\rho'$ respectively.
\end{thm*}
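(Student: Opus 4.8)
The plan is to compare the two representations via their decomposition matrices over $\Irr(G_m,\rho)\times\Irr(G'_{m'},\rho')$, to transport both to the unipotent side through the Lusztig correspondence, and to match the resulting multiplicities by checking them first on uniform virtual characters and then removing the non-uniform ambiguity. Since the Lusztig correspondence $\iota_s\colon\mathscr{E}(G_m,(s))\xrightarrow{\sim}\mathscr{E}(C_{G_m}(s),(1))$ is characterized, up to the ambiguity coming from $\operatorname{Aut}(C_{G_m}(s))$, by the property that it carries $R_T^{G_m}(\hat{s}\,\theta)$ to $\pm R_T^{C_{G_m}(s)}(\theta)$ for every $s$-stable maximal torus $T$ and every unipotent character $\theta$ of $T$, it is enough to compute the Howe multiplicities $\langle\omega^\flat_{m,m'},\,R_T^{G_m}(\hat{s}\,\theta)\otimes R_{T'}^{G'_{m'}}(\hat{s'}\,\theta')\rangle$ for all such data, to show that they agree with the analogous multiplicities of $\mathbf{R}_{G_\#(s)}\otimes\omega^\flat_{m-l,m'-l}$ through the bijection of tori induced by $C_{G_m}(s)=G_\#(s)\times G_{m-l}$ (and likewise on the primed side), and then to project everything onto the series $\rho,\rho'$.

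The next ingredient is G\'erardin's character formula: the value of $\omega^\flat_{m,m'}$ at $(g,g')$ is, up to an explicit sign and power of $q$, governed by $\dim_{\mathbb{F}_{q^2}}\ker(g\otimes g'-1)$ on $V_m\otimes V_{m'}$, so that pairing with Deligne--Lusztig characters turns into the statement that the Howe correspondence commutes, up to sign and up to the first-occurrence bookkeeping of Theorem~\ref{HoweHarish-Chandra}, with Deligne--Lusztig induction. I would then split $V_m=V_\#\oplus V_0$, with $s$ acting without the eigenvalue $1$ on $V_\#$ and as the identity on $V_0$, so that $\U(V_0)=G_{m-l}$ and $C_{G_m}(s)$ acts through $G_\#(s)$ on $V_\#$, and do the same for $V_{m'}$. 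By the analysis underlying Theorem~\ref{HoweHarish-Chandra} the spectral data of $s$ away from the eigenvalue $1$ must match those of $s'$; in particular $l=l'$ and $G_\#(s)\simeq G_\#(s')$. Under the induced splitting of tori $T=T_\#\times T_0$, $T'=T'_\#\times T'_0$, the matching of $(T,\hat{s}\,\theta)$ with $(T',\hat{s'}\,\theta')$ decomposes into a matching of the $V_0$-data, which is exactly the one governing $\omega^\flat_{m-l,m'-l,\rho_{m-l}}$, together with a matching of the $V_\#$-data; and on the blocks involving $V_\#$ or $V'_\#$ the tensor $s\otimes s'$ has no eigenvalue $1$, so that this part of the dual pair behaves like a linear one and the correspondence of $(T_\#,\theta_\#)$ with $(T'_\#,\theta'_\#)$ becomes the ``identity'' via $G_\#(s)\simeq G_\#(s')$. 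On uniform functions this shows that $(\iota_s\otimes\iota_{s'})\,\omega^\flat_{m,m',\rho}$ is the tensor product of the uniform projection of the two-sided regular representation $\mathbf{R}_{G_\#(s)}$ with $\omega^\flat_{m-l,m'-l,\rho_{m-l}}$, projected onto the series $\rho_\#\otimes\rho_{m-l}$.

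It then remains to upgrade this from uniform functions to genuine representations. On the $\GL$-factors of $G_\#(s)$ every character is uniform, so there the identity is already complete. On the unitary factors of $G_\#(s)$ and on the leftover factor $G_{m-l}$ I would proceed by induction on the rank: both $\omega^\flat_{m,m',\rho}$ and the claimed target are compatible with Harish-Chandra restriction, which by Theorem~\ref{HoweHarish-Chandra} reduces their non-cuspidal part to groups of smaller rank, and the uniform-part identity, combined for the remaining cuspidal data with the unipotent computations of \cite{AMR} (together with Proposition~2.3 of \cite{AMR}, which guarantees that the target is indeed supported on unipotent representations), pins down the non-uniform multiplicities. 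Restricting to the Harish-Chandra series $(\rho,\rho')\leftrightarrow(\rho_u,\rho'_u)$ then gives the stated identification.

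I expect the main obstacle to be exactly this last step made precise, together with the sign bookkeeping: one must arrange that the signs appearing in ``Howe commutes with Deligne--Lusztig induction'' cancel against those in the normalization of $\iota_s$, and, more seriously, one must show that the $V_\#$-contribution is the \emph{full} two-sided regular representation $\mathbf{R}_{G_\#(s)}$ and not merely a virtual character with the same uniform projection. It is here that the compatibility of the Lusztig correspondence with Harish-Chandra series (and their Hecke-algebra structure in the sense of Howlett--Lehrer), and the reduction to the unipotent case of \cite{AMR}, carry the real weight of the proof.
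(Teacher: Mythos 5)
There is a gap, and it sits exactly where you place the ``real weight'' of the argument without supplying it. The bulk of your proposal (Gérardin's character formula, matching of Deligne--Lusztig multiplicities, the splitting $V_m=V_\#\oplus V_0$, the passage from uniform projections to genuine representations) is a sketch of a re-derivation of the identification of $\omega^\flat_{m,m',s}$ with $\mathbf{R}_{G_\#(s),1}\otimes\omega^\flat_{m-l,m'-l,1}$ at the level of Lusztig series; but that is precisely Th\'eor\`eme 2.6 of \cite{AMR} (Theorem \ref{ReductionUnipotentCase}), which can simply be quoted, and your plan for the hard ``uniform to genuine'' upgrade (induction on rank plus the unipotent computations of \cite{AMR}) is too vague to count as a proof of it anyway. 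What the theorem actually requires beyond that citation is the step you only name in your final sentence: the compatibility of the Lusztig correspondence with Harish-Chandra induction, i.e. $\mathfrak{L}^{G}_s\circ R_L^{G}=R_{C_L(s)}^{C_G(s)}\circ\mathfrak{L}^L_s$ on $\mathscr{E}(L,(s))$ (Proposition \ref{LusztigHarish-Chandra}). Without it you do not know that the bijection of Theorem \ref{ReductionUnipotentCase} carries $\Irr(G_m\times G'_{m'},\rho\otimes\rho')$ onto $\Irr(C_{G_m}(s)\times C_{G'_{m'}}(s'),\rho_u\otimes\rho'_u)$, so the concluding move ``restricting to the Harish-Chandra series then gives the stated identification'' has nothing to stand on; the multiplicity identity
$$
\langle \pi_u\otimes \pi'_u,\,R_{C_L(s)}^{C_G(s)}(\rho_u)\otimes R_{C_{L'}(s')}^{C_{G'}(s')}(\rho'_u)\rangle
=\langle \pi,\,R_L^{G_m}(\rho)\rangle\,\langle\pi',\,R_{L'}^{G'_{m'}}(\rho')\rangle
$$
is what makes the restriction of the bijection, and hence the matching of the two projections, legitimate.

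The irony is that your uniform-function technique is exactly the right tool for this missing step, applied to the simpler target: since $L$ and $C_L(s)$ are products of linear and unitary groups, every class function on them is uniform, so one writes $\rho$ as an integral combination of Deligne--Lusztig characters, uses transitivity of Lusztig induction, Lemma \ref{LeviCentralizer} (so that $C_\mathbf{L}(s)$ is a rational Levi of a rational parabolic of $C_\mathbf{G}(s)$), and the sign bookkeeping of Proposition \ref{RationalLeviFqRank} to get the commutation; the sign cancellation you worry about is handled there, not in a global character computation for $\omega^\flat$. Two smaller omissions: you identify $G_\#(s)\simeq G_\#(s')$ but never the cuspidal data $\rho_\#\simeq\rho'_\#$ (Proposition \ref{DecompositionCuspidalPairViaXi}), which is needed even to make sense of $\mathbf{R}_{G_\#(s),\rho_\#}$ as the target; and the determination of the pair $(\mathbf{L}',\rho')$ itself rests on Theorem \ref{HoweHarish-Chandra}, which you use only implicitly. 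In short: cite the unipotent reduction instead of re-proving it, and spend your effort proving the Lusztig--Harish-Chandra compatibility you currently assume.
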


This theorem reduces the study of the Howe correspondence to the study of the correspondence between unipotent Harish-Chandra series. Indeed, since $\rho$ is a cuspidal representation, $\rho_{m-l}$ is a cuspidal \emph{and} unipotent representation. Thanks to recent work by Pan (cf. \cite{Pan2}) it should be possible to extend the previous theorem to symplectic-orthogonal pairs. We finish the paper with two applications. 

We provide a generalisation of results in a previous paper. For $\pi$ in $\Irr(G_m)$ (not necessarily unipotent) we define an order on $\Theta^\flat_{m,m'}(\pi)$ which is compatible with the one introduced in \cite{Epequin}, and find minimal and maximal representations on this set (see Theorem \ref{ExtremalRepresentations}). 

Using a result by Howlett and Lehrer (cf. \cite{HL}) we identify $\omega^\flat_{m,m',\rho}$ to a representation $\Omega_{m,m',\rho}$ of $W_{\mathbf{G}_m}(\rho)\times W_{\mathbf{G}'_{m'}}(\rho')$ (see Section \ref{SectionExtremalRepresentations}). We use the previous theorem to express $\omega^\flat_{m,m',\rho}$ in terms of $\mathbf{R}_{W_{G_\#(s)}(\rho_\#)}$ and the representation $\Omega_{m-l,m'-l,\rho_{m-l}}$ (see Theorem \ref{GeneralizedWeylGroupCorrespondence}). As mentioned before, the latter was made explicit in \cite{AMR} (cf. Theorem 3.10 in that paper).

\section{Howe correspondance and cuspidal support}\label{HoweCorrespondanceCuspidalSupport}
Theorem 3.7 of \cite{AMR} states that the correspondence $\Theta^\flat$ is compatible with unipotent Harish-Chandra series . In this section, we generalize this theorem to arbitrary Harish-Chandra series. The proof follows that of Theorem 2.5 in \cite{Kudla}, and relies on the Schr\"odinger mixed model together with some basic results concerning induced representations of finite groups.
 
\subsection{Some preliminary results}\label{PreliminariesRepresentationTheory}
Through all the statements $G$ and $N$ denote arbitrary finite groups and $H$ denotes a subgroup of $G$. The proofs are straightforward computations.

Let $G$ act on a set $X$ and $(\varphi,V)$ be a representation of $G$. Take $x\in X$ and denote by $G\cdot x$ its orbit, by $\St(x)$ its stabilizer, and by $\mathscr{S}(Z, V)$ the vector space of functions defined on $Z \subset X$ with values in $V$.  
\begin{lem}\label{ind-from-stab}
The representation $\varphi$ and the action of $G$ on $X$ induce a  linear action on $\mathscr{S}(G\cdot x, V)$. Moreover, we have a $G$-isomorphism,
$$
\mathscr{S}(G\cdot x, V) \simeq \Ind_{\St(x)}^G \varphi.
$$
\end{lem}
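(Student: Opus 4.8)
The plan is to exhibit an explicit isomorphism and check it intertwines the two actions. First I would describe the action on $\mathscr{S}(G\cdot x, V)$: for $g \in G$ and $f \in \mathscr{S}(G\cdot x, V)$, set $(g\cdot f)(y) = \varphi(g)\bigl(f(g^{-1}y)\bigr)$ for $y \in G\cdot x$; one checks this is well-defined (the argument $g^{-1}y$ stays in the orbit) and defines a linear representation of $G$. Recall that $\Ind_{\St(x)}^G \varphi$ can be realized on the space of functions $\tilde f : G \to V$ satisfying $\tilde f(gh) = \varphi(h)^{-1}\tilde f(g)$ for all $h \in \St(x)$, with $G$ acting by left translation $(g\cdot \tilde f)(g') = \tilde f(g^{-1}g')$.

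Next I would write down the map between the two models. Given $f \in \mathscr{S}(G\cdot x, V)$, define $\tilde f : G \to V$ by $\tilde f(g) = \varphi(g)^{-1}\bigl(f(g\cdot x)\bigr)$. The key computation is to verify the equivariance condition: for $h \in \St(x)$ we have $\tilde f(gh) = \varphi(gh)^{-1}(f(gh\cdot x)) = \varphi(h)^{-1}\varphi(g)^{-1}(f(g\cdot x)) = \varphi(h)^{-1}\tilde f(g)$, using $h\cdot x = x$. So $\tilde f$ indeed lies in the induced module. Conversely, given $\tilde f$ in the induced module, one recovers $f$ on the orbit by choosing, for each $y \in G\cdot x$, some $g$ with $g\cdot x = y$ and setting $f(y) = \varphi(g)(\tilde f(g))$; the equivariance of $\tilde f$ makes this independent of the choice of $g$. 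These two assignments are mutually inverse linear maps.

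Finally I would check that the correspondence $f \mapsto \tilde f$ is $G$-equivariant: starting from $g_0 \cdot f$, its image sends $g$ to $\varphi(g)^{-1}\bigl((g_0\cdot f)(g\cdot x)\bigr) = \varphi(g)^{-1}\varphi(g_0)\bigl(f(g_0^{-1}g\cdot x)\bigr) = \varphi(g_0^{-1}g)^{-1}\bigl(f(g_0^{-1}g\cdot x)\bigr) = \tilde f(g_0^{-1}g) = (g_0\cdot \tilde f)(g)$, which is exactly the left-translation action on the induced module. This establishes the $G$-isomorphism.

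The only subtle point — and the one place where care is needed rather than routine bookkeeping — is the well-definedness of the inverse map: one must confirm that $f(y) = \varphi(g)(\tilde f(g))$ does not depend on the representative $g$ of the coset $g\,\St(x)$, which is precisely where the defining relation $\tilde f(gh) = \varphi(h)^{-1}\tilde f(g)$ of the induced module is used. Everything else is the straightforward computation the paper already advertises, so I would present the isomorphism, record the equivariance identity, and note the well-definedness check, leaving the remaining verifications to the reader.
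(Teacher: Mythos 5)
Your proof is correct, and it is exactly the routine verification the paper has in mind: the paper omits the argument entirely ("the proofs are straightforward computations"), and your explicit map $f \mapsto \tilde f$, $\tilde f(g) = \varphi(g)^{-1}\bigl(f(g\cdot x)\bigr)$, with the well-definedness check for the inverse, is the standard way to carry it out (noting, as you implicitly do, that $\Ind_{\St(x)}^G\varphi$ means inducing the restriction of $\varphi$ to $\St(x)$). Nothing further is needed.
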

\begin{lem}\label{semi-rep}
Let $G$ act on $N$ and $\varphi$ be a representation of $G\ltimes N$. Then,
$$
\varphi(g)\varphi(n)\varphi(g^{-1})=\varphi(g\cdot n).
$$
\end{lem}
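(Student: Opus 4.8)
The plan is to unwind the definition of the semidirect product and then apply the fact that a representation is, by definition, a group homomorphism. Concretely, I would fix the convention that $G\ltimes N$ has underlying set $G\times N$ with multiplication $(g_1,n_1)(g_2,n_2)=(g_1g_2,\,(g_2^{-1}\cdot n_1)\,n_2)$ (or whichever convention is used elsewhere in the paper), together with the canonical embeddings $\iota_G\colon g\mapsto(g,1)$ and $\iota_N\colon n\mapsto(1,n)$, through which we regard $g$ and $n$ as elements of $G\ltimes N$ in the statement. The entire content of the lemma is then the internal identity
\begin{equation*}
\iota_G(g)\,\iota_N(n)\,\iota_G(g)^{-1}=\iota_N(g\cdot n)\qquad\text{in }G\ltimes N .
\end{equation*}

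First I would verify this identity by a direct computation with pairs: one has $\iota_G(g)^{-1}=(g^{-1},1)$, so
\begin{equation*}
(g,1)(1,n)(g^{-1},1)=(g,n)(g^{-1},1)=(g g^{-1},\,(g\cdot n)\cdot 1)=(1,g\cdot n)=\iota_N(g\cdot n),
\end{equation*}
using the multiplication rule twice. (With the opposite convention the same two-line computation goes through after adjusting which factor the action falls on; the only thing to be careful about is keeping the convention consistent with the rest of the paper.)

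Then I would simply apply $\varphi$: since $(\varphi,V)$ is a representation of $G\ltimes N$, it is a group homomorphism $G\ltimes N\to\GL(V)$, so it respects products and inverses. Applying $\varphi$ to both sides of the displayed internal identity and writing $\varphi(g)$, $\varphi(n)$ for $\varphi(\iota_G(g))$, $\varphi(\iota_N(n))$ as in the statement gives
\begin{equation*}
\varphi(g)\varphi(n)\varphi(g)^{-1}=\varphi(g\cdot n),
\end{equation*}
which is the claim. There is essentially no obstacle here — the proof is a straightforward computation, as announced — the only point requiring any attention is that the identification of $g$ and $n$ with elements of $G\ltimes N$, and the sign/side conventions for the semidirect product action, be the ones fixed earlier so that this lemma dovetails with its later uses (notably in the Schrödinger mixed model computations of Section \ref{HoweCorrespondanceCuspidalSupport}).
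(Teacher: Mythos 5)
Your proof is correct and is exactly the routine verification the paper has in mind: the paper omits the argument, saying only that these preliminary lemmas follow by straightforward computation, and your check of the internal conjugation identity in $G\ltimes N$ followed by applying the homomorphism $\varphi$ is that computation. No issues.
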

In other words, giving a representation of a semidirect product $G\ltimes N$ amounts to give two representations, $\omega$ of $G$ and $\rho$ of $N$, such that 
$$
\omega(g)\rho(n)\omega(g^{-1}) = \rho(g\cdot n).
$$
\begin{lem}\label{semi-ind}
Let $\varphi$ be a representation of $H\ltimes N$. 
\begin{itemize}
\item[a)] The formula $\phi(f)(g,n)=\varphi(g(n))f(g)$ defines an equivalence of representations, 
$$
\phi:\Ind_H^G(\varphi\rvert_H)  \simeq (\Ind_{H\ltimes N}^{G\ltimes N}\varphi)|_G.
$$
\item[b)] Restriction induces a $G$-isomorphism,
$$
(\Ind_{H\ltimes N}^{G\ltimes N}\varphi)_N\simeq \Ind_H^G (\varphi)_N. 
$$
\end{itemize}  
\end{lem}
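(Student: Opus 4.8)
The plan is first to fix concrete models for the two induced representations and a convention for the semidirect product, and then to verify both assertions by direct computation; the paper itself flags these as ``straightforward'', and indeed the only delicate point is lining the conventions up so that the twist $\varphi(g(n))$ comes out exactly as written. Concretely, I would realize $\Ind_H^G W=\{f\colon G\to W\mid f(hg)=\varphi(h)f(g)\}$ with $G$ acting by right translation, realize $\Ind_{H\ltimes N}^{G\ltimes N}\varphi$ likewise as suitably equivariant functions on $G\ltimes N$, and use that $N$ carries a $G$-action extending the $H$-action (this is what makes $G\ltimes N$ meaningful); under it $\{1\}\times N$ is normal in $G\ltimes N$ with quotient $G$, so $N\subseteq H\ltimes N$ and $(H\ltimes N)\backslash(G\ltimes N)\cong H\backslash G$.

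For part a) I would argue in three steps. (i) Check that $\phi(f)$ lies in $\Ind_{H\ltimes N}^{G\ltimes N}\varphi$: expanding the left translate of $(g,n)$ by $(h,m)\in H\ltimes N$ via the semidirect-product rule and using $f(hg)=\varphi(h)f(g)$ together with the relation of Lemma~\ref{semi-rep} (applied inside $H\ltimes N$, to commute $\varphi(h)$ past $\varphi$ of an element of $N$) yields the required equivariance. (ii) Check that $\phi$ intertwines the $G$-actions: expanding $(g,n)(g_0,1)$ and simplifying $\varphi$ of the resulting element of $N$ reduces to the associativity of the $G$-action on $N$. (iii) Bijectivity: the assignment $F\mapsto(g\mapsto F(g,1))$ — which is literally restriction of $F$ along $G\hookrightarrow G\ltimes N$, $g\mapsto(g,1)$ — is a two-sided inverse, because $\varphi(g(1))=\varphi(1)=\Id$; equivalently $\phi$ is injective since each $\varphi(g(n))$ is invertible, and one concludes by the dimension count $[G\ltimes N:H\ltimes N]=[G:H]$.

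Part b) I would deduce from part a) by applying the functor of $N$-invariants, equivalently $N$-coinvariants (the two agree canonically since we work over $\mathbb{C}$ with $N$ finite), which turns the right-hand side of a) into $(\Ind_{H\ltimes N}^{G\ltimes N}\varphi)_N$. Transporting the $N$-action through $\phi$ one finds that $n_0\in N$ acts on $f\in\Ind_H^G(\varphi\rvert_H)$ by $(n_0\star f)(g)=\varphi(g(n_0))\,f(g)$; since $n\mapsto g(n)$ is a bijection of $N$ for each fixed $g$, averaging over $N$ shows that the $N$-averaging idempotent acts on $f$ pointwise by the $N$-averaging idempotent of $\varphi$, whence its image is $\{f\colon G\to(\varphi)_N\mid f(hg)=\varphi(h)f(g)\}=\Ind_H^G(\varphi)_N$ — here one uses that $H$ normalizes $N$, so $(\varphi)_N$ is $\varphi\rvert_H$-stable — and the resulting isomorphism is again restriction of functions, as asserted. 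A slicker variant writes $\Ind_{H\ltimes N}^{G\ltimes N}\varphi=\mathbb{C}[G\ltimes N]\otimes_{\mathbb{C}[H\ltimes N]}V$ and observes $\mathbb{C}\otimes_{\mathbb{C}[N]}\mathbb{C}[G\ltimes N]\cong\mathbb{C}[G]$ as a $(\mathbb{C}[G],\mathbb{C}[H\ltimes N])$-bimodule, the right action factoring through $\mathbb{C}[H]$, which collapses b) to transitivity of induction. I do not anticipate any genuine obstacle: everything is elementary finite-group representation theory, and the only real risk is sign/convention drift in the semidirect-product bookkeeping.
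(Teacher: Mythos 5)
Your proof is correct, and it is exactly the kind of direct verification the paper has in mind: the paper gives no written proof, stating only that these preliminary lemmas follow by straightforward computation, and your computation (equivariance via Lemma~\ref{semi-rep}, intertwining, inverse by restriction along $g\mapsto(g,1)$, then passing to $N$-coinvariants) supplies it correctly.
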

\begin{lem}\label{char-tens-ind}
Let $\chi$ be a representation of $G$, and $\varphi$ a representation of $H$. We have a $G$-isomorphism
$$
\chi\otimes\Ind_H^G \varphi\simeq \Ind_H^G (\chi\rvert_H\otimes\varphi)
$$
\end{lem}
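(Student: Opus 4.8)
The plan is to work with the explicit function model of induction: realize $\Ind_H^G\varphi$ as the space of functions $f\colon G\to V$ satisfying $f(hg)=\varphi(h)f(g)$ for all $h\in H$, $g\in G$, with $G$ acting by right translation, $(g'\cdot f)(g)=f(gg')$. Likewise, if $(\chi,W)$ is the given representation of $G$, then $\Ind_H^G(\chi|_H\otimes\varphi)$ is the space of functions $F\colon G\to W\otimes V$ with $F(hg)=(\chi(h)\otimes\varphi(h))F(g)$.

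The heart of the argument is to write down the candidate intertwiner and verify it works. I would define
$$
\Phi\colon W\otimes\Ind_H^G\varphi\longrightarrow \Ind_H^G(\chi|_H\otimes\varphi),\qquad \Phi(w\otimes f)(g)=\chi(g)w\otimes f(g),
$$
extended linearly. The first check is that $\Phi(w\otimes f)$ really lies in the target space: for $h\in H$ one computes $\Phi(w\otimes f)(hg)=\chi(hg)w\otimes f(hg)=(\chi(h)\otimes\varphi(h))(\chi(g)w\otimes f(g))$, using the transformation rule for $f$. The second check is $G$-equivariance: for $g'\in G$, both $\Phi(g'\cdot(w\otimes f))$ and $g'\cdot\Phi(w\otimes f)$ evaluate at $g$ to $\chi(gg')w\otimes f(gg')$, since $\chi(gg')=\chi(g)\chi(g')$ and $(g'\cdot f)(g)=f(gg')$. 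Both are routine one-line computations, in the spirit of the other lemmas in this subsection.

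Finally I would conclude bijectivity. Both sides have dimension $\dim\chi\cdot[G:H]\cdot\dim\varphi$, so it suffices to prove injectivity. If $w_1,\dots,w_k$ are linearly independent in $W$ and $\Phi(\sum_i w_i\otimes f_i)=0$, then $\sum_i\chi(g)w_i\otimes f_i(g)=0$ for every $g\in G$; applying $\chi(g)^{-1}\otimes\mathrm{id}$ gives $\sum_i w_i\otimes f_i(g)=0$, whence $f_i(g)=0$ for all $i$ and all $g$, i.e.\ each $f_i=0$. Hence $\Phi$ is a $G$-isomorphism. The only point requiring care — and the closest thing to an obstacle — is bookkeeping of the left/right conventions for induction and the placement of the twist $\chi(g)$ (rather than $\chi(g)^{-1}$) in the definition of $\Phi$; once the conventions are fixed as above, nothing else is needed.
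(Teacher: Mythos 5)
Your proof is correct: the intertwiner $\Phi(w\otimes f)(g)=\chi(g)w\otimes f(g)$ is the standard one, and your checks of equivariance, the transformation rule, and injectivity plus dimension count are all valid. The paper gives no explicit argument for this lemma (it declares the proofs in this subsection to be straightforward computations), and your write-up is exactly the routine computation intended.
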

Denote by $\mathbf{R}_G$, the natural representation of the group $G\times G$ on the space $\mathscr{S}(G)$, of functions defined on $G$ with values in $\mathbb{C}$. This representation is isomorphic to the one obtained by inducing the trivial representation of $G$ to $G\times G$ (diagonal inclusion). It decomposes as 
$$
\mathbf{R}_G = \sum_{\pi\in\Irr(G)} \pi\otimes \hat{\pi},
$$
where $\hat{\pi}$ denotes the contragredient representation of $\pi$. Denote by $\iota$ the involution of $\Irr(G)$ sending $\pi$ to $\hat{\pi}$.

The following result concerns the $\mathbb{F}_q$-rank of certain rational Levi subgroups (see Definition 8.3 in \cite{Digne-Michel}). In its statement $\epsilon_\mathbf{G}$ denotes $(-1)^{\mathbb{F}_q-\rank}$. We provide the proof for lack of reference.
\begin{prop}\label{RationalLeviFqRank}
If $\mathbf{M}$ is a rational Levi contained in a rational parabolic subgroup of $\mathbf{G}$, then $\epsilon_\mathbf{M}$ is equal to $\epsilon_\mathbf{G}$. 
\end{prop}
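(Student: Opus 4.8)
The plan is to deduce the statement from the (a priori stronger) equality of $\mathbb{F}_q$-ranks $\rank_{\mathbb{F}_q}(\mathbf{M}) = \rank_{\mathbb{F}_q}(\mathbf{G})$, obtained by exhibiting one maximal torus that is simultaneously maximally split inside $\mathbf{M}$ and inside $\mathbf{G}$. I will invoke two standard facts about connected reductive groups over $\mathbb{F}_q$. First, any such group $\mathbf{H}$ is quasi-split: if $\mathbf{B}_1$ is any Borel subgroup of $\mathbf{H}$, then $F(\mathbf{B}_1)$ is again a Borel, so $F(\mathbf{B}_1) = g^{-1}\mathbf{B}_1 g$ for some $g \in \mathbf{H}$, and writing $g = h^{-1}F(h)$ by Lang's theorem one checks that ${}^{h}\mathbf{B}_1$ is $F$-stable. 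Second, in a quasi-split group the $\mathbb{F}_q$-rank coincides with the split rank of the maximal torus contained in an $F$-stable Borel subgroup (Borel--Tits theory), and the split rank of a torus is an intrinsic invariant of that torus, independent of any ambient group; thus $\epsilon_\mathbf{H} = (-1)^{\rank_{\mathbb{F}_q}(\mathbf{H})}$ is computed from such a torus.

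Write $\mathbf{P} = \mathbf{M} \ltimes \mathbf{U}$ for the (rational) Levi decomposition, $\mathbf{U}$ being the $F$-stable unipotent radical of the rational parabolic $\mathbf{P}$. Since $\mathbf{M}$ is connected reductive over $\mathbb{F}_q$ it is quasi-split, so we may choose an $F$-stable Borel subgroup $\mathbf{B}_\mathbf{M}$ of $\mathbf{M}$ and, inside it, an $F$-stable maximal torus $\mathbf{S}$ (the maximal tori of the $F$-stable connected solvable group $\mathbf{B}_\mathbf{M}$ form a single conjugacy class, so Lang's theorem again produces an $F$-stable one). Then $\mathbf{B}_\mathbf{M}\mathbf{U}$ is a closed connected solvable subgroup of $\mathbf{G}$, and $\mathbf{G}/(\mathbf{B}_\mathbf{M}\mathbf{U})$ is complete since both $\mathbf{G}/\mathbf{P}$ and $\mathbf{P}/(\mathbf{B}_\mathbf{M}\mathbf{U}) \simeq \mathbf{M}/\mathbf{B}_\mathbf{M}$ are; hence $\mathbf{B}_\mathbf{M}\mathbf{U}$ is parabolic, and being connected solvable it is a Borel subgroup of $\mathbf{G}$. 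It is visibly $F$-stable and contains $\mathbf{S}$. Therefore $\mathbf{S}$ lies in an $F$-stable Borel of $\mathbf{M}$ and in an $F$-stable Borel of $\mathbf{G}$, whence $\rank_{\mathbb{F}_q}(\mathbf{M}) = \rank_{\mathbb{F}_q}(\mathbf{S}) = \rank_{\mathbb{F}_q}(\mathbf{G})$; in particular these integers have the same parity, which is exactly $\epsilon_\mathbf{M} = \epsilon_\mathbf{G}$.

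I do not expect a genuine difficulty here: once the ambient structure theory is in place the argument is short and formal. The one point that needs care is purely a matter of bookkeeping, namely making sure that the notion of $\mathbb{F}_q$-rank in Definition 8.3 of \cite{Digne-Michel} really is the split rank of a maximally split maximal torus, so that the parity comparison above is legitimate. If quoting that identification directly is awkward, one can instead argue on cocharacter lattices: with $\mathbf{S}$ as above, the $\mathbb{F}_q$-rank in question is the dimension of the $F$-fixed subspace of $Y(\mathbf{S}) \otimes \mathbb{Q}$, and this single number simultaneously computes the $\mathbb{F}_q$-ranks of $\mathbf{M}$ and of $\mathbf{G}$ by the construction just carried out.
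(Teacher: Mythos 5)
Your proof is correct and follows essentially the same route as the paper: both arguments exhibit a single rational maximal torus lying in a rational Borel subgroup of $\mathbf{M}$ and in a rational Borel subgroup of $\mathbf{G}$ contained in $\mathbf{P}$ (related by $\mathbf{B}=\mathbf{B}_\mathbf{M}\mathbf{U}$, resp. $\mathbf{B}_\mathbf{M}=\mathbf{B}\cap\mathbf{M}$), so that the two $\mathbb{F}_q$-ranks are computed by the same torus. The only difference is the order of construction (you start from an $F$-stable Borel of $\mathbf{M}$ and extend by $\mathbf{U}$, while the paper picks a rational Borel of $\mathbf{G}$ inside $\mathbf{P}$ and intersects with $\mathbf{M}$), and your version spells out the quasi-splitness and Lang-theorem steps that the paper leaves implicit.
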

\begin{proof}
By definition, we need to prove the equality between the $\mathbb{F}_q$-rank of $\mathbf{M}$ and $\mathbf{G}$. Let $\mathbf{P}$ be the rational parabolic containing $\mathbf{M}$, and consider a rational maximal torus $\mathbf{T}$ of $\mathbf{M}$. We can choose a rational Borel subgroup $\mathbf{B}$ of $\mathbf{G}$ contained in $\mathbf{P}$ and containing $\mathbf{T}$. Since it is also contained in the rational Borel subgroup $\mathbf{B}\cap\mathbf{M}$ of $\mathbf{M}$, the  $\mathbb{F}_q$-rank of $\mathbf{M}$ and $\mathbf{G}$ are equal.
\end{proof}

\subsection{Mixed Schr\"odinger model}
Denote by $W_n$ a symplectic space of dimension $2n$ and fix a linear representation $\psi$ of $\mathbb{F}_q$. Denote by $\rho_n$ the \emph{Heisenberg representation} of $H(W_n)$, the Heisenberg group; and by $\omega_n$ the \emph{Weil representation} (cf. \cite[\S 1.2]{Epequin}) of $\Sp(W_n)$, the symplectic group. They verify the intertwining relation
$$
\omega_n(x)\rho_n(h)\omega_n(x^{-1}) = \rho_n (x \cdot h).
$$
This, according to Lemma \ref{semi-rep}, is the same as giving a representation of the semidirect product $\Sp(W_n)\ltimes H(W_n)$ whose restrictions to $\Sp(W_n)$ and $H(W_n)$ are $\omega_n$ and $\rho_n$ respectively.

If $W_n = V_1 \operp V_2$ is an orthogonal sum of symplectic vector spaces, then there is a short exact sequence
$$
1\rightarrow \mathbb{F}_q \xrightarrow{i} H(V_1)\times H(V_2) \xrightarrow{j} H(W_n) \rightarrow 1,
$$
with arrows defined by $i(t)=(t,-t)$ and $j((v_1,t_1),(v_2,t_2))=(v_1+v_2,t_1+t_2)$. Let $(\rho_1,S_1)$ and $(\rho_2,S_2)$ be models of the Heisenberg representations of $H(V_1)$ and $H(V_2)$ respectively, then $(\rho_1\otimes\rho_2,S_1\otimes S_2)$ is a model for the Heisenberg representation of $H(W_n)$. Moreover, the metaplectic representation $\omega_n$ of $\Sp(W_n)$ coincides with $\omega_1\otimes\omega_2$ on its subgroup $\Sp(V_1)\times\Sp(V_2)$.

 Let $\{e_1,\ldots,e_n,e'_n,\ldots,e'_1\}$ be a symplectic base of $W_n$. Let $k\leq n$ and $X_k$ (resp. $Y_k$) be a totally isotropic subspace spanned by the $k$ first (resp. last) vectors in this base. The non-degenerate pairing $X_k\times Y_k\rightarrow \mathbb{C}$ induced by the symplectic form, allows to identify $Y_k$ with the dual space $\check{X}_k$ of $X_k$. Therefore, there is an \emph{Witt decomposition} :
$$
W_n\simeq (X_k\oplus \check{X}_k)\operp W_{n-k}.
$$ 
The Heisenberg representation of $H(X_k\oplus \check{X}_k)$ can be realized in the space $\mathscr{S}(\check{X}_k)$. Let $(\rho_{n-k},S_{n-k})$ be a model for the Heisenberg representation of the group $H(W_{n-k})$. From the previous paragraph, the tensor product $\mathscr{S}(\check{X}_k)\otimes S_{n-k}$ provides a model for the Heisenberg representation of $H(W_n)$, called \emph{mixed Schr\"odinger model}. It can be identified to $\mathscr{S}(\check{X}_k,S_{n-k})$. 
\begin{lem}\label{mix-Heisenberg}\cite[Exemple I.4]{VMW}
The action of the Heisenberg representation $\rho_n$ in $\mathscr{S}(\check{X}_k,S_{n-k})$ is given by,
$$
\rho_n(w,t)f(\check{y}) = \psi(\langle\check{y},x\rangle+\langle\check{x},x\rangle/2+t)\rho_{n-k}(w_{n-k})f(\check{x}+\check{y}),
$$
where $w=x+w_{n-k}+\check{x}$, $x\in X_k$, $w_{n-k}\in W_{n-k}$ and $\check{x},\check{y}\in \check{X}_k$.
\end{lem}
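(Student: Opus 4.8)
The plan is to derive the formula by combining two facts already in place in the text: the \emph{standard} Schrödinger model of the Heisenberg group of a symplectic space split as a direct sum of two dual Lagrangians, and the tensor product construction of Heisenberg representations attached to an orthogonal sum $W_n = V_1 \operp V_2$, recorded just above via the short exact sequence $1 \to \mathbb{F}_q \to H(V_1)\times H(V_2) \to H(W_n) \to 1$. I would apply the latter to the Witt decomposition $W_n \simeq (X_k \oplus \check X_k)\operp W_{n-k}$, so that $V_1 = X_k \oplus \check X_k$ and $V_2 = W_{n-k}$.

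First I would pin down the model on $\mathscr{S}(\check X_k)$. For $V_1 = X_k \oplus \check X_k$, with $\langle\cdot,\cdot\rangle$ the pairing between $X_k$ and $\check X_k$ induced by the symplectic form, one checks that the formula
$$
\rho^{V_1}(x + \check x, t)\, f(\check y) = \psi\big(\langle\check y, x\rangle + \tfrac12\langle\check x, x\rangle + t\big)\, f(\check x + \check y),
$$
for $x \in X_k$, $\check x,\check y \in \check X_k$, $t\in\mathbb{F}_q$, defines a representation of $H(V_1)$ with central character $\psi$; by Stone--von Neumann it is therefore \emph{the} Heisenberg representation, realized on $\mathscr{S}(\check X_k)$. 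The only point requiring care in this verification is that the quadratic term $\tfrac12\langle\check x, x\rangle$ (which makes sense since $q$ is odd) is exactly what is needed to absorb the symplectic cocycle defining the multiplication on $H(V_1)$.

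Next, by the tensor product construction recalled above, the representation $\rho^{V_1}\otimes\rho_{n-k}$ on $\mathscr{S}(\check X_k)\otimes S_{n-k} \simeq \mathscr{S}(\check X_k, S_{n-k})$ factors through $j : H(V_1)\times H(V_2)\to H(W_n)$ and realizes $\rho_n$. Given $(w,t)\in H(W_n)$ with $w = x + w_{n-k} + \check x$, I would lift it along $j$ to the pair $\big((x+\check x,0),(w_{n-k},t)\big)$, so that $\rho_n(w,t) = \rho^{V_1}(x+\check x,0)\otimes\rho_{n-k}(w_{n-k},t)$; evaluating this operator on $f$ and substituting the formula for $\rho^{V_1}$ gives the claimed identity. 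One finally checks that the answer is independent of the chosen lift: two lifts differ by an element $i(u)=(u,-u)$ of $\ker j$, and $\rho^{V_1}(u)\otimes\rho_{n-k}(-u) = \psi(u)\psi(-u)\Id = \Id$ since $\psi$ is a character.

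The main obstacle is purely one of conventions: one must fix the symplectic form on $W_n$ (hence the multiplication law in the Heisenberg groups), the pairing $\langle\cdot,\cdot\rangle$ on $X_k\times\check X_k$, and the identification $Y_k \simeq \check X_k$ coming from the symplectic base, and then verify that with these normalisations the central term of the Schrödinger formula really is $\langle\check y,x\rangle + \tfrac12\langle\check x,x\rangle + t$ and not some variant differing by signs or a factor of $2$. Once the normalisations are made consistent, everything else is a routine computation; no input beyond Stone--von Neumann (used implicitly to identify $\rho^{V_1}\otimes\rho_{n-k}$ with $\rho_n$) is required.
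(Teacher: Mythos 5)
Your argument is correct and is essentially the standard derivation of the mixed model: the paper itself gives no proof, citing only \cite[Exemple I.4]{VMW}, and what you write (the Schr\"odinger model of $H(X_k\oplus\check X_k)$ on $\mathscr{S}(\check X_k)$, tensored with a model $S_{n-k}$ for $H(W_{n-k})$ via the exact sequence $1\to\mathbb{F}_q\to H(V_1)\times H(V_2)\to H(W_n)\to 1$, with the well-definedness check on the kernel $i(\mathbb{F}_q)$) is precisely the construction underlying that reference and the paragraph preceding the lemma. The only points you leave implicit --- the cocycle computation showing your $\rho^{V_1}$ formula is a homomorphism, and irreducibility (forced by $\dim\mathscr{S}(\check X_k)=q^{k}$ together with the central character $\psi$, so Stone--von Neumann applies) --- are routine and correctly flagged as convention checks.
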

Explicit formulas for the action of the Weil representation $\omega_n$ on the vector space $\mathscr{S}(\check{X}_k,S_{n-k})$ are also known. Let $P_k$ be the stabilizer of $X_k$ in $\Sp(W_n)$. It is a maximal parabolic group with a Levi decomposition $P_k=M_k\ltimes N_k$. Its standard Levi subgroup $M_k$ consist of matrices $m(a,u) = \diag(a,u,{}^\mathrm{t}a^{-1})$ for $a\in\GL_k$ and $u\in\Sp(W_{n-k})$. The unipotent radical $N_k$ is the group of matrices
$$
n(c,d) = \left(\begin{matrix}
1 & c & d-c {}^\mathrm{t} c/2\\
   & 1 & -{}^\mathrm{t}c\\
   &    & 1
\end{matrix}\right),
$$
where $d$ is a symmetric matrix. 

Consider the representation $\chi$ of $P_k\ltimes H(X_k\oplus W_{n-k})$, trivial on $N_k$, defined by
$$
\chi(m) = \det(a)^{(q-1)/2}\omega_{n-k}(u). 
$$
on elements $m=m(a,u)$ of $M_k$, and by
$$
\chi(x+w_0,t)=\rho_{n-k}(w_0,t)
$$
on elements of the Heisenberg group. There is a  unique representation of $\Sp(W_n)\ltimes H(W_n)$ whose restriction to $P_k\ltimes H(W_n)$ agrees with the induced representation of $\chi$ to this group, \cite[Theorem 2.4]{Gerardin}. The space $\mathscr{S}(\check{X}_k,S_{n-k})$ can be identified with the vector space of this induced representation, on which $P_k$ acts by right translations. 
\begin{lem}\label{mix-meta}
The action of $P_k$ on $\mathscr{S}(\check{X}_k,S_{n-k})$ is given by
$$
\omega_n(m)f(\check{x})  = \det(a)^{(q-1)/2}\omega_{n-k}(u)f(\check{a}\check{x}) 
$$
for elements $m=m(a,u)$ of Levi subgroup $M_k$, and by
$$
\omega_n(n) f(\check{x})  = \psi(\langle d\check{x},\check{x}\rangle/2)\rho_{n-k}(\check{c}\check{x})f(\check{x}),
$$
for elements $n=n(c,d)$ of the unipotent radical $N_k$.
\end{lem}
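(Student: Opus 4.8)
The plan is to read off the two formulas directly from the description of $\mathscr{S}(\check X_k, S_{n-k})$ as the space of the representation induced from $\chi$, with $P_k$ acting by right translation. Write $\mathbf{H} = P_k\ltimes H(X_k\oplus W_{n-k})$ for the inducing subgroup; a vector is then a function $F$ on $P_k\ltimes H(W_n)$ with values in $S_{n-k}$ satisfying $F(\ell g) = \chi(\ell)F(g)$ for all $\ell\in\mathbf{H}$, with $P_k$ acting by $(g_0\cdot F)(g) = F(gg_0)$. Since $H(W_n)/H(X_k\oplus W_{n-k})$ is represented by the elements $(\check x, 0)$ with $\check x\in\check X_k$, the assignment $F\mapsto\big(\check x\mapsto f(\check x):=F(\check x, 0)\big)$ realizes the identification with $\mathscr{S}(\check X_k, S_{n-k})$, and I would start by making this explicit and checking it against Lemma \ref{mix-Heisenberg}, which also fixes the normalization of the $\psi$-factors.

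The key point is that $P_k$ is contained in $\mathbf{H}$, so for $m\in P_k$ one can move $m$ across $(\check x, 0)$ by conjugation inside $P_k\ltimes H(W_n)$ and extract it on the left using $\chi$-equivariance:
\[
(m\cdot F)(\check x, 0) = F\big((\check x, 0)\,m\big) = F\big(m\cdot(m^{-1}(\check x, 0)\,m)\big) = \chi(m)\,F\big(m^{-1}(\check x, 0)\,m\big),
\]
where $m^{-1}(\check x, 0)\,m = (m^{-1}\cdot\check x, 0)$ by Lemma \ref{semi-rep}. For a Levi element $m = m(a,u)$ the subspace $\check X_k$ is stable, so $(m^{-1}\cdot\check x, 0)$ again lies in the slice, $F$ of it equals $f(m^{-1}\cdot\check x)$, and inserting $\chi(m(a,u)) = \det(a)^{(q-1)/2}\omega_{n-k}(u)$ — with $\check a\check x$ denoting the transform of $\check x$ under $m(a,u)^{-1}$ — yields the first formula. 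For a unipotent element $n = n(c,d)$, reading off the matrix gives $n^{-1}\cdot\check x = \check x + \alpha + \beta$ with $\alpha\in X_k$ and $\beta\in W_{n-k}$ explicit linear functions of $\check x$; one then rewrites $(\check x + \alpha + \beta, 0) = (\alpha+\beta, t_1)(\check x, 0)$ inside $H(W_n)$, where the Heisenberg multiplication law together with the orthogonality $W_{n-k}\perp(X_k\oplus\check X_k)$ forces $t_1 = -\tfrac12\langle\alpha,\check x\rangle$, and applies $F(\ell\cdot(\check x, 0)) = \chi(\ell)f(\check x)$ for $\ell = (\alpha+\beta, t_1)\in H(X_k\oplus W_{n-k})$. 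Since $\chi$ is trivial on $N_k$ and $\chi(\alpha+\beta, t_1) = \rho_{n-k}(\beta, t_1) = \psi(t_1)\rho_{n-k}(\beta)$, this gives $\omega_n(n)f(\check x) = \psi(t_1)\rho_{n-k}(\beta)f(\check x)$, and it remains only to express $t_1$ and $\beta$ in terms of $c$ and $d$.

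The main obstacle — indeed essentially the only non-routine point — is this last bookkeeping: one must compute the quadratic phase emerging from the Heisenberg cocycle and check that, in the duality $Y_k\cong\check X_k$ induced by the symplectic form, the contribution of the term $c\,{}^{\mathrm t}c/2$ in the $(1,3)$-block of $n(c,d)$ cancels by antisymmetry, leaving precisely $t_1 = \langle d\check x,\check x\rangle/2$ and $\beta = \check c\check x$, with every sign and factor of $1/2$ consistent with the conventions of Lemma \ref{mix-Heisenberg} and of $\chi$. As an independent check I would test the two resulting operators against the intertwining relation $\omega_n(g)\rho_n(h)\omega_n(g)^{-1} = \rho_n(g\cdot h)$ using the explicit Heisenberg action of Lemma \ref{mix-Heisenberg}: since the Heisenberg representation with central character $\psi$ is irreducible, each $\omega_n(g)$ is determined by this relation up to a scalar, and the residual scalar ambiguity on $P_k$ is resolved by the normalization built into $\chi$ together with $\omega_n|_{\Sp(W_{n-k})} = \omega_{n-k}$.
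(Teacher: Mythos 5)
Your derivation is correct and is exactly the argument the paper leaves implicit: the lemma is stated without proof as the standard mixed-model formula following from the realization of the extended representation as $\Ind$ of $\chi$ with $P_k$ acting by right translation (Gérardin's Theorem 2.4, cf.\ MVW), and your computation — moving $m$ resp.\ $n$ across the slice $(\check x,0)$, using $\chi$-equivariance, and noting that the $c\,{}^{\mathrm t}c/2$ term dies by antisymmetry of the symplectic form on $W_{n-k}$, leaving $t_1=\langle d\check x,\check x\rangle/2$ and $\beta=\check c\check x$ — is precisely how those formulas are obtained.
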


\subsection{Weil representation coinvariants}
From now on we will fix two Witt towers, $\mathscr{T}$ and $\mathscr{T}'$, such that $(G_m,G'_{m'})$ is a type I dual pair for any $G_m\in\mathscr{T}$ and $G'_{m'}\in\mathscr{T}'$. 

Let $D$ be a field equal to $\mathbb{F}_q$ when the dual pair is symplectic-orthogonal, and equal to $\mathbb{F}_{q^2}$ when the pair is unitary. Denote by $N$ the norm of $D$ over $\mathbb{F}_q$. Let $W_m$  (resp. $W'_{m'}$) be the $D$-vector space of Witt index $m$ (resp. $m'$) on which $G_m$ (resp. $G'_{m'}$) acts; denote by $n$ (resp. $n'$) its dimension. 

Let $X_k$ denote the totally isotropic subspace of $W_m$, spanned by the $k$ first vectors of a hermitian base, $k\leq m$. Let $P_k$ be the stabilizer of $X_k$ in $G_m$. Denote by $N_k$ its unipotent radical, $\GL_k=\GL(X_k)$ and $M_k=\GL_k\times G_{m-k}$ the standard Levi subgroup of $P_k$. Let $Q_j$ be the stabilizer of $X_j$ in $\GL_k$, $j \leq k$. It is a parabolic group whose elements are matrices $\left(\begin{smallmatrix}a_j & \ast \\  & a_{k-j} \end{smallmatrix}\right)$. Finally, denote by $X'_{k'}$, $\GL'_{k'}$, $P'_{k'}$, $N'_{k'}$, $M'_{k'}$ and $Q'_{j'}$ the analogous for $G'_{m'}$.

\begin{thm}\label{coinv-filtr} Let $\prescript{*}{}{R}_k$ and $\prescript{*}{}{R'}_{k'}$  be the parabolic restriction functor from $G_m\times G'_{m'}$ to $M_k\times G'_{m'}$ and $G_m\times M'_{k'}$ respectively.

\emph{a)} There exists a $M_k\times G'_{m'}$-invariant filtration $0=\tau_0\subset \tau_1\subset \cdots \subset \tau_{r+1}=\prescript{*}{}{R}_k(\omega_{m,m'}^\flat)$,  where $r=\min\{k,m'\}$. Its successive quotients $\tau_{i+1}/\tau_i$ verify
 $$
 \tau_{i+1}/\tau_i \simeq \Ind_{Q_{k-i}G_{m-k}\times P'_i}^{M_k\times G'_{m'}} \mathbf{R}_{\GL_{i}}\otimes\omega_{m-k,m'-i}^\flat.
 $$
 
\emph{b)} Likewise, there is a $G_m\times M'_{k'}$-invariant filtration $0=\tau'_0\subset \tau'_1\subset \cdots \subset \tau'_{r'+1}=\prescript{*}{}{R'}_{k'}(\omega_{m,m'}^\flat)$,  where $r'=\min\{k',m\}$. Its successive quotients $\tau'_{i+1}/\tau'_i$ verify
 $$
 \tau'_{i+1}/\tau'_i \simeq \Ind_{P_i\times Q'_{k'-i}G'_{m'-k'}}^{G_m\times M'_{k'}}\mathbf{R}_{\GL_{i}}\otimes\omega_{m-i,m'-k'}^\flat.
 $$
\end{thm}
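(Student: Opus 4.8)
The plan is to realize $\omega_{m,m'}^\flat$ in the mixed Schr\"odinger model attached to the Witt decomposition $W_m = X_k\oplus\check X_k\operp W_{m-k}$, to compute its $N_k$-coinvariants (which is precisely $\prescript{*}{}{R}_k$), and to extract the filtration from the orbit stratification of the resulting variety. Write $\mathbb{W}$ for the $\mathbb{F}_q$-symplectic space $W_m\otimes_D W'_{m'}$ carrying the Weil representation of the dual pair. Then $\mathbb{X}_k := X_k\otimes_D W'_{m'}$ is totally isotropic in $\mathbb{W}$, its dual may be taken to be $\check{\mathbb{X}}_k := \check X_k\otimes_D W'_{m'}$, and $\mathbb{W} = (\mathbb{X}_k\oplus\check{\mathbb{X}}_k)\operp(W_{m-k}\otimes_D W'_{m'})$. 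The parabolic $P_k\times G'_{m'}$ embeds into the maximal parabolic of $\Sp(\mathbb{W})$ stabilizing $\mathbb{X}_k$, with $M_k=\GL(X_k)\times G_{m-k}$ and $G'_{m'}$ landing in the Levi factor $\GL(\mathbb{X}_k)\times\Sp(W_{m-k}\otimes_D W'_{m'})$ and $N_k$ in the unipotent radical. Applying Lemma \ref{mix-meta}, $\omega_{m,m'}^\flat$ is realized on $\mathscr{S}(\check X_k\otimes_D W'_{m'}, S_{m-k,m'})$, where $S_{m-k,m'}$ is a model of $\omega_{m-k,m'}^\flat$: here $\GL(X_k)$ translates the $\check X_k\otimes_D W'_{m'}$-argument (twisted by the character built into $\omega^\flat$), $G_{m-k}$ and $G'_{m'}$ act through their natural actions on $S_{m-k,m'}$ (with $G'_{m'}$ also acting on the argument through its action on the $W'_{m'}$-factor), and $N_k$ acts through a central ``$d$''-type part by the characters $f\mapsto\big(\check x\mapsto\psi(\langle d\check x,\check x\rangle/2)f(\check x)\big)$ and a ``$c$''-type part by $f\mapsto\big(\check x\mapsto\rho_{m-k,m'}(\check c\check x)f(\check x)\big)$.

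Next I would compute $\prescript{*}{}{R}_k(\omega_{m,m'}^\flat)=(\omega_{m,m'}^\flat)_{N_k}$ in two stages. Averaging over the central part of $N_k$ kills everything outside the subspace of functions supported on the cone $\mathscr{C}\subset\check X_k\otimes_D W'_{m'}=\Hom_D(X_k,W'_{m'})$ consisting of those maps whose image in $W'_{m'}$ is totally isotropic; indeed $\langle d\check x,\check x\rangle$, viewed as a linear functional of the hermitian form $d$, vanishes identically exactly when $\check x$ pulls back the form of $W'_{m'}$ to zero. Then averaging over the remaining part of $N_k$: for $\check x$ of rank $i$ the elements $\check c\check x$ sweep out the totally isotropic subspace $W_{m-k}\otimes\operatorname{im}(\check x)$ of $W_{m-k}\otimes_D W'_{m'}$, and the $\rho_{m-k,m'}$-coinvariants of $S_{m-k,m'}$ along translations by this subspace are, by Lemma \ref{mix-Heisenberg} applied to $W_{m-k}\otimes_D W'_{m'}$ and the isotropic subspace $\operatorname{im}(\check x)\subset W'_{m'}$, identified with a model $S_{m-k,m'-i}$ of $\omega_{m-k,m'-i}^\flat$ as a representation of $G_{m-k}\times G'_{m'-i}$. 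Thus $\prescript{*}{}{R}_k(\omega_{m,m'}^\flat)$ is the space of sections over $\mathscr{C}$ of an $(M_k\times G'_{m'})$-equivariant bundle whose fibre over a rank-$i$ point is a model of $\omega_{m-k,m'-i}^\flat$.

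Now $M_k\times G'_{m'}$ acts on $\check X_k\otimes_D W'_{m'}$ through $\GL(X_k)\times G'_{m'}$, preserving $\mathscr{C}$ and its stratification $\mathscr{C}_0\subset\mathscr{C}_1\subset\cdots\subset\mathscr{C}_r$ by the closed subsets $\mathscr{C}_i=\{\check x:\rk\check x\le i\}$, with $r=\min\{k,m'\}$ since an isotropic image has dimension at most $\min\{k,m'\}$; by Witt's extension theorem each $\mathscr{C}_i\setminus\mathscr{C}_{i-1}$ is a single orbit. Taking $\tau_{i+1}$ to be the subspace of sections supported on $\mathscr{C}_i$ gives an $(M_k\times G'_{m'})$-stable filtration whose quotient $\tau_{i+1}/\tau_i$ is the space of sections over the orbit $\mathscr{C}_i\setminus\mathscr{C}_{i-1}$. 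By (the equivariant-bundle form of) Lemma \ref{ind-from-stab}, $\tau_{i+1}/\tau_i\simeq\Ind_{\St(\check x_i)}^{M_k\times G'_{m'}}$ of the fibre at a base point $\check x_i$ chosen with kernel $X_{k-i}$ and image $X'_i$; a direct computation identifies $\St(\check x_i)$ inside $Q_{k-i}G_{m-k}\times P'_i$ as the subgroup on which the $\GL_i$-Levi block of $Q_{k-i}$ and the $\GL_i$-Levi block of $P'_i$ are forced to agree via the isomorphism $X_k/X_{k-i}\xrightarrow{\sim}X'_i$ induced by $\check x_i$, with $G_{m-k}$, $\GL_{k-i}$ and the two unipotent radicals unconstrained; and on $\St(\check x_i)$ the fibre is the trivial representation on $\GL_{k-i}$ and on both unipotent radicals, tensored with $\omega_{m-k,m'-i}^\flat$ on $G_{m-k}\times G'_{m'-i}$ (here one checks that the $\flat$-twist of Lemma \ref{mix-meta} restricts trivially to the relevant $\GL_i$ --- this is exactly the point of working with $\omega^\flat$). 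Transitivity of induction, the identification $\Ind_{\Delta\GL_i}^{\GL_i\times\GL_i}1=\bigoplus_{\pi}\pi\otimes\hat\pi=\mathbf{R}_{\GL_i}$, and Lemmas \ref{semi-ind} and \ref{char-tens-ind} (to move the inductions past the tensor factors) then give $\tau_{i+1}/\tau_i\simeq\Ind_{Q_{k-i}G_{m-k}\times P'_i}^{M_k\times G'_{m'}}\mathbf{R}_{\GL_i}\otimes\omega_{m-k,m'-i}^\flat$, which is part a). Part b) follows by the same argument with the roles of $(G_m,X_k,P_k)$ and $(G'_{m'},X'_{k'},P'_{k'})$ exchanged, i.e. using the mixed model attached to $X'_{k'}\otimes_D W_m\subset\mathbb{W}$.

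The main obstacle is the precise identification of the stabilizer action on the fibre: tracking the $\flat$-twist through the mixed-model formulas of Lemmas \ref{mix-Heisenberg} and \ref{mix-meta}, computing $\St(\check x_i)$ correctly, and recognizing the ``diagonal gluing'' of the two $\GL_i$-Levi blocks that, after inducing to $Q_{k-i}G_{m-k}\times P'_i$, produces the factor $\mathbf{R}_{\GL_i}$. The orbit combinatorics itself is harmless, being governed entirely by Witt's theorem and the rank stratification of $\Hom_D(X_k,W'_{m'})$.
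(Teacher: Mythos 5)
Your proposal is correct and follows essentially the same route as the paper: the mixed Schr\"odinger model for $W_m\otimes W'_{m'}$, coinvariants computed first for the $d$-part (support on maps with totally isotropic image) and then for the $c$-part (reducing the fibre to a model of $\omega^\flat_{m-k,m'-i}$ via a second mixed model), the rank stratification with a single orbit per stratum, the stabilizer with diagonally glued $\GL_i$-blocks, and induction in stages producing $\mathbf{R}_{\GL_i}$. The only difference is organizational --- you take all $N_k$-coinvariants fibrewise before stratifying, whereas the paper filters after the $N_{k,1}$-step and then computes $N_{k,2}$-coinvariants of the subquotients --- which is harmless since coinvariants are exact here.
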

The proof of the previous theorem is long, so it is presented in the next section. An easy consequence of it is the following.
\begin{cor}\label{coinv-submod}
\begin{itemize}
\item[a)] The parabolic restriction $\prescript{*}{}{R}_k(\omega_{m,m'}^\flat)$ verifies :
$$
\prescript{*}{}{R}_k(\omega_{m,m'}^\flat)\simeq\bigoplus_{i=0}^{\min\{k,m'\}} \Ind_{Q_{k-i}G_{m-k}\times P'_i}^{M_k\times G'_{m'}} \mathbf{R}_{\GL_{i}}\otimes\omega_{m-k,m'-i}^\flat.
$$
\item[b)] Likewise, the parabolic restriction $\prescript{*}{}{R}'_{k'}(\omega_{m,m'}^\flat)$ verifies
$$
\prescript{*}{}{R}'_{k'}(\omega_{m,m'}^\flat)\simeq \bigoplus_{i=0}^{\min\{k',m\}} \Ind_{P_i\times Q'_{k'-i}G'_{m'-k'}}^{G_m\times M'_{k'}}\mathbf{R}_{\GL_{i}}\otimes\omega_{m-i,m'-k'}^\flat.
$$
\end{itemize}
\end{cor}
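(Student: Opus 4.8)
The plan is to deduce the corollary directly from Theorem \ref{coinv-filtr} by observing that the $M_k \times G'_{m'}$-invariant filtration constructed there actually splits. The key structural fact is that each successive quotient $\tau_{i+1}/\tau_i$ is an induced representation from a parabolic-type subgroup $Q_{k-i}G_{m-k} \times P'_i$, and — crucially — the subgroups appearing for different values of $i$ involve Harish-Chandra induction from Levi subgroups of \emph{different} $\GL$-ranks on the two factors. More precisely, the first factor $Q_{k-i}G_{m-k}$ cuts $\GL_k$ along the flag $X_{k-i}$, while the $G'_{m'}$-side parabolic $P'_i$ has $\GL$-part of rank exactly $i$; as $i$ runs over $\{0,\dots,r\}$ these data are pairwise non-conjugate, so no two distinct quotients share a common irreducible constituent.

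First I would make this last point precise: by Lemma \ref{char-tens-ind} and Lemma \ref{semi-ind}, each $\tau_{i+1}/\tau_i$ is a genuine Harish-Chandra induction $\Ind$ from $Q_{k-i}G_{m-k} \times P'_i$, so by transitivity of parabolic induction every irreducible constituent of $\tau_{i+1}/\tau_i$ lies in a Harish-Chandra series whose cuspidal support has, on the $G'_{m'}$-factor, a $\GL$-part whose total rank is congruent to a fixed quantity determined by $i$ (and on the $\GL_k$-factor, a flag refining $X_{k-i}$). Then I would invoke the disjointness of Harish-Chandra series: since the cuspidal supports attached to constituents of $\tau_{i+1}/\tau_i$ and $\tau_{j+1}/\tau_j$ with $i \neq j$ differ (the $\GL'$-ranks $i$ and $j$ being distinct), $\Hom(\tau_{i+1}/\tau_i, \tau_{j+1}/\tau_j) = 0$ in both directions. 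An extension between representations with no common constituent splits, so by induction on the length of the filtration $\prescript{*}{}{R}_k(\omega_{m,m'}^\flat) \simeq \bigoplus_{i=0}^{r} \tau_{i+1}/\tau_i$, which is exactly the claimed formula in part a). Part b) follows by the identical argument applied to the filtration $\tau'_\bullet$ of Theorem \ref{coinv-filtr}b).

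The main obstacle I anticipate is making the disjointness argument airtight: one must verify that the subgroups $Q_{k-i}G_{m-k} \times P'_i \subset M_k \times G'_{m'}$ are, up to conjugacy, standard parabolics whose Levi factors genuinely have distinct types as $i$ varies, so that Harish-Chandra series really do separate the quotients. Concretely this means tracking the $\GL$-rank on the $G'_{m'}$-side — it equals $i$ for the $i$-th quotient, counting the part of $X'_i$ not cancelled against a cuspidal $G'_{m'-i}$-piece — which does the job since these ranks are all distinct. An alternative, if one wishes to avoid invoking Harish-Chandra disjointness, is to exhibit an explicit $M_k \times G'_{m'}$-equivariant splitting of each inclusion $\tau_i \hookrightarrow \tau_{i+1}$ coming from the orbit decomposition in the Schrödinger model used to prove Theorem \ref{coinv-filtr}; but the series-disjointness route is cleaner and I would present that.
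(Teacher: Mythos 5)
Your reduction of the corollary to the splitting of the filtration in Theorem \ref{coinv-filtr} is the right idea, but the mechanism you invoke is both unnecessary and, as stated, false. Over $\mathbb{C}$ every representation of a finite group is completely reducible, so \emph{any} finite invariant filtration of $\prescript{*}{}{R}_k(\omega_{m,m'}^\flat)$ automatically splits as the direct sum of its successive quotients; this semisimplicity is the entire content of the step the paper treats as an immediate consequence of Theorem \ref{coinv-filtr}. No disjointness of constituents is needed, and in fact none holds: the claim that $\tau_{i+1}/\tau_i$ and $\tau_{j+1}/\tau_j$ share no irreducible constituent for $i\neq j$ is wrong. The representation induced at stage $i$ is not confined to a single Harish-Chandra series, because the factor $\omega_{m-k,m'-i}^\flat$ contains many non-cuspidal constituents, so after inducing from $Q_{k-i}G_{m-k}\times P'_i$ the cuspidal supports that occur are not pinned down by $i$; in particular the $\GL$-rank appearing on the $G'_{m'}$-side of the support is not forced to equal $i$. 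Concretely, on the $\GL_k$-factor the $i$-th quotient contains constituents of $\Ind_{Q_{k-i}}^{\GL_k}(1\otimes\sigma)$ for every $\sigma\in\Irr(\GL_i)$, and choosing $\sigma$ with trivial cuspidal support produces supports that occur for all $i$ simultaneously; the analogous overlap on the $G'_{m'}$-side is precisely why, in the proof of Theorem \ref{HoweCuspidalSupport}, the terms $i=0$ and $i=t_1$ can both contribute to the same $\pi\otimes\pi'$ when $\sigma_1=1$. Non-conjugacy of the inducing subgroups does not prevent induced representations from sharing constituents, so the disjointness route, taken literally, breaks down.

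The correct and intended argument is simply this: since the category of complex representations of the finite group $M_k\times G'_{m'}$ (resp.\ $G_m\times M'_{k'}$) is semisimple, the filtration of Theorem \ref{coinv-filtr}(a) (resp.\ (b)) splits, giving $\prescript{*}{}{R}_k(\omega_{m,m'}^\flat)\simeq\bigoplus_{i}\tau_{i+1}/\tau_i$ and the stated formula, and likewise for part (b). Your fallback suggestion of exhibiting an explicit equivariant splitting from the Schr\"odinger model is therefore also superfluous, though harmless.
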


\subsection{Proof of Theorem \ref{coinv-filtr}}
We start by calculating the coinvariants relative to the group $N_{k,1}$ consisting of matrices $n(0,d)$. Then, we compute the coinvariants of the group $N_{k,2}$ consisting of matrices $n(c,0)$. Due to the short exact sequence
$$
1\rightarrow N_{k,1}\rightarrow N_k \rightarrow N_{k,2} \rightarrow 1,
$$
performing these two computations is equivalent to calculating the coinvariants of $N_k$. In each calculation, we use a mixed Schr\"odinger model because it allows us to express the Jacquet functor in terms of restriction of functions.  

\textit{Coinvariants relative to $N_{k,1}$} : Consider the decomposition $W_m  \simeq X_k \oplus W_{m-k} \oplus \check{X}_k$, where $\check{X}_k$ denotes the dual space of $X_k$. Tensoring by $W'_{m'}$ produces
$$
W_m \otimes W'_{m'} \simeq ( X_k \otimes W'_{m'} \oplus  \check{X}_k \otimes W'_{m'} ) \operp W_{m-k} \otimes W'_{m'} 
$$   

The mixed Sch\"odinger model corresponding to this orthogonal sum makes $\Sp(W_m\otimes W'_{m'})$ act on $\mathscr{S}(\check{X}_k\otimes W'_{m'},S_{m-k,m'})$ where $S_{m-k,m'}$ is a model of the metaplectic representation of $H(W_{m-k} \otimes W'_{m'})$. Lemma \ref{mix-meta} implies the action of the parabolic subgroup $P_k \times G'_{m'}$ of $G_m \times G'_{m'}$ on $\mathscr{S}(\check{X}_k\otimes W'_{m'},S_{m-k,m'})$ is given by
$$
\omega_{m,m'}^\flat(m,g')f(\check{x}) = \omega^\flat_{m-k,m'}(u,g')f(g'\check{x}a^{-1})
$$
if $m = m(a,u)$ belongs to $M_k$, and $g'$ belongs to $G'_{m'}$. Also, 
$$
\omega^\flat_{m,m'}(n)f(\check{x}) = \psi(\langle d \check{x},\check{x}\rangle/2)\rho_{m-k,m'}(\check{c}\check{x})f(\check{x}),
$$
if $n=n(c,d)$ belongs to $N_k$, $f$ belongs to $\mathscr{S}(\check{X}_k\otimes W'_{m'},S_{m-k,m'})$, $\check{x}$ belongs to $\check{X}_k \otimes W'_{m'} \simeq \Hom(X_k,W'_{m'})$, and $\langle\cdot,\cdot\rangle$ is the natural paring between $\check{X}_k \otimes W'_{m'}$ and its dual, .
 
Therefore, the action of $n(0,d) \in N_{k,1}$ is given by
$$
\omega_{m,m'}^\flat(n(0,d))f(\check{x}) = \psi(\langle d \check{x},\check{x}\rangle/2)f(\check{x}),
$$
so the $f\in \mathscr{S}(\check{X}_k\otimes W'_{m'},S_{m-k,m'})$ fixed by $N_{k,1}$ are those whose support is contained in $\mathscr{S}(Z,S_{m-k,m'})$ where $Z$ is the subspace consisting of those $x \in \Hom(X_k,W'_{m'})$ whose image is totally isotropic. This implies the following.
\begin{prop}
The restriction to $Z$ defines a $P_k\times G'_{m'}$-isomorphism between the space of coinvariants relative to $N_{k,1}$ and $\mathscr{S}(Z,S_{m-k,m'})$. 
\end{prop}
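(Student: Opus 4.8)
Here is the plan.

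The plan is to exploit the fact that, in the mixed Schr\"odinger model, every $n(0,d)\in N_{k,1}$ acts on $\mathscr{S}(\check{X}_k\otimes W'_{m'},S_{m-k,m'})$ by a pointwise multiplication operator, so that taking coinvariants amounts to throwing away certain fibres of $\check{X}_k\otimes W'_{m'}\simeq\Hom(X_k,W'_{m'})$. First I would write $\mathscr{S}(\check{X}_k\otimes W'_{m'},S_{m-k,m'})=\bigoplus_{\check{x}}S_{m-k,m'}$, identifying $f$ with the family $(f(\check{x}))_{\check{x}}$. By the formula displayed just before the statement (the $c=0$ case of Lemma \ref{mix-meta} in the tensored-up model), $N_{k,1}$ is abelian and acts on the summand indexed by $\check{x}$ through the linear character $\theta_{\check{x}}\colon n(0,d)\mapsto\psi(\langle d\check{x},\check{x}\rangle/2)$. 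Over $\mathbb{C}$ the augmentation submodule is then $\bigoplus_{\check{x}:\,\theta_{\check{x}}\neq 1}S_{m-k,m'}$ (on a summand with $\theta_{\check{x}}$ nontrivial the elements $v-n(0,d)v=(1-\theta_{\check{x}}(n(0,d)))v$ span everything, and on a summand with $\theta_{\check{x}}$ trivial they vanish), so the restriction map $f\mapsto f|_{Z_0}$ is a surjection onto $\mathscr{S}(Z_0,S_{m-k,m'})$ with exactly this kernel, where $Z_0=\{\check{x}:\theta_{\check{x}}\equiv 1\}$; it therefore descends to an isomorphism from the coinvariants onto $\mathscr{S}(Z_0,S_{m-k,m'})$.

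It then remains to identify $Z_0$ with $Z$ and to check equivariance. Since $\psi$ is a fixed nontrivial additive character (replaced by $\psi\circ\Tr_{\mathbb{F}_{q^2}/\mathbb{F}_q}$ in the unitary case) and $d\mapsto\langle d\check{x},\check{x}\rangle$ is $\mathbb{F}_q$-linear on the space of symmetric (resp. hermitian) matrices parametrising $N_{k,1}$, the character $\theta_{\check{x}}$ is trivial exactly when $\langle d\check{x},\check{x}\rangle=0$ for all such $d$. Unwinding the identifications, $\langle d\check{x},\check{x}\rangle$ is the pairing of $d$ with the Gram form of $\check{x}$, i.e.\ with the pullback along $\check{x}\colon X_k\to W'_{m'}$ of the form carried by $W'_{m'}$; this Gram form has the same symmetry type as $d$, the pairing between the two spaces is perfect, and hence $\theta_{\check{x}}\equiv 1$ iff the Gram form vanishes iff the image $\check{x}(X_k)$ is a totally isotropic subspace of $W'_{m'}$, so $Z_0=Z$. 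I expect this linear-algebra identification to be the main obstacle: one must pin down the pairing $\langle\,\cdot\,,\,\cdot\,\rangle$, keep careful track of the factor $1/2$ and of the transpose/congruence conventions coming from the matrix description of $N_k$, and run the argument uniformly for the symplectic and unitary pairs.

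For equivariance, note that $N_{k,1}$ is normalised by $M_k$ (on which it transforms by congruence of $d$) and is central in $N_k$, hence normal in $P_k$, and it commutes with $G'_{m'}$, so $P_k\times G'_{m'}$ acts on the coinvariants. Moreover $Z$ is stable under the action of $P_k\times G'_{m'}$ on $\Hom(X_k,W'_{m'})$: an element $m(a,u)$ of $M_k$ together with $g'\in G'_{m'}$ sends $\check{x}$ to $g'\check{x}a^{-1}$, which still has totally isotropic image since $g'$ preserves the form on $W'_{m'}$, while $N_k$ acts by pointwise multiplication operators and does not move supports. Since the operators giving the $P_k\times G'_{m'}$-action on $\mathscr{S}(Z,S_{m-k,m'})$ are simply the restrictions to $Z$ of the operators on $\mathscr{S}(\check{X}_k\otimes W'_{m'},S_{m-k,m'})$, the isomorphism $f\mapsto f|_Z$ is automatically $P_k\times G'_{m'}$-equivariant, which finishes the proof.
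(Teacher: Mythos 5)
Your proof is correct and follows essentially the same route as the paper: in the mixed Schr\"odinger model $N_{k,1}$ acts by the pointwise characters $\psi(\langle d\check{x},\check{x}\rangle/2)$, so the coinvariants are computed by restriction to the locus where these characters are all trivial, which is exactly $Z$. You merely spell out details the paper leaves implicit, namely the augmentation-submodule computation, the perfect-pairing argument identifying that locus with the set of $\check{x}$ with totally isotropic image, and the $P_k\times G'_{m'}$-equivariance of the restriction map.
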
 
The subspace $Z$ is invariant under the action of $P_k\times G'_{m'}$. The orbits of this action are $Z_i = \{\check{x}\in Z\rvert \rank{\check{x}}=i\}$ for $i=0,\ldots,\min\{k,m'\}$. We denote $\min\{k,m'\}$ by $r$ for short.
 
This orbit decomposition $Z=\cup Z_i$ provides the following finite $P_k\times G'_{m'}$-invariant filtration
$$
\{0\}=\mathscr{S}_0 \subset \mathscr{S}_1 \subset \cdots \subset \mathscr{S}_{r+1}= \mathscr{S}(Z,S_{m-k,m'}), 
$$ 
where $\mathscr{S}_i= \{f\in \mathscr{S}(Z,S_{m-k,m'})\rvert  f(\check{x})=0 \mbox{ for } \check{x}\mbox{ with }\rank(\check{x})\geq i\}$. The subquotients of this filtration are $\mathscr{S}_{i+1}/\mathscr{S}_i \simeq \mathscr{S}(Z_i,S_{m-k,m'})$, for  $i=0,\ldots,r$. 
 
For each $i$ take $z_i\in Z_i$ and consider the representation $(\varphi_i,S_{m-k,m'})$ of $P_k\times G'_{m'}$ defined by 
\begin{align*}
\varphi_i(mn,g') = \omega^\flat_{m-k,m'}(u,g')\rho_{m-k,m'}(\check{c} z_i) 
\end{align*}
for $g'\in G'_{m'}$, $n=n(c,d) \in N_k$ and $m=m(a,u)\in M_k$. The action of $M_k\times G'_{m'}$ via this representation and the natural action of $M_k\times G'_{m'}$ on $Z_i$ induce a representation of $M_k\times G'_{m'}$ on $\mathscr{S}(Z_i,S_{m-k,m'})$. This representation is equal to the restriction of $\mathscr{S}_{i+1}/\mathscr{S}_i$ to $M_k\times G'_{m'}$. Hence, Lemma \ref{ind-from-stab} provides a $M_k\times G'_{m'}$-isomorphism,
$$
\mathscr{S}_{i+1}/\mathscr{S}_i \simeq \Ind_{\St(z_i)  G_{m-k}}^{M_k\times G'_{m'}}\varphi_i.
$$
where $\St(z_i)$ is the stabilizer of $z_i$ in $\GL_k\times G'_{m'}$.

The group $P_k\times G'_{m'}$ can be written as a semidirect product $M_k\times G'_{m'} \ltimes N_k$. The stabilizer $H_i$ of $z_i$ inside of $P_k\times G'_{m'}$ can be also expressed as a semidirect product $H_i=\St(z_i) G_{m-k}\ltimes N_k$, so that Lemma \ref{semi-ind} provides an isomorphism of $M_k\times G'_{m'}$-modules
$$
\phi : \mathscr{S}_{i+1}/\mathscr{S}_i \simeq \Ind_{H_i}^{P_k\times G'_{m'}}\varphi_i.
$$
That lemma also gives an explicit formula for $\phi$ which in this case becomes
$$
\phi(f)(mn,g') = \varphi_i(mn,g')f(g'z_ia^{-1}),
$$
where $m=m(a,u)\in M_k$, $g'\in G'_{m'}$ and $n\in N_k$. A direct calculation shows that $\phi$ is \`a fortiori a $N_k$-morphism and hence an isomorphism of $P_k\times G'_{m'}$-modules.

Finally, Lemma \ref{semi-ind} gives us a $M_k\times G'_{m'}$-isomorphism 
\begin{align}\label{suc-quot}
(\mathscr{S}_{i+1}/\mathscr{S}_i)_{N_k} \simeq \Ind_{\St(z_i)  G_{m-k}}^{M_k\times G'_{m'}}(\varphi_i)_{N_k}.
\end{align} 
To continue the proof we need to compute the coinvariants of $\varphi_i$ relative to $N_k$. Due to the fact that $N_{k,1}$ already acts trivially, this computation comes down to that of the coinvariants relative to $N_{k,2}$.  
 
\textit{Coinvariants relative to $N_{k,2}$} : Let $X_i'$ be the image of $z_i$. Consider the decomposition $W'_{m'}\simeq X'_i\oplus W'_{m'-i}\oplus \check{X}'_i$. Tensoring by $W_{m-k}$ we obtain
\begin{align}\label{mix-mod-2}
W_{m-k} \otimes W'_{m'} \simeq ( X'_i \otimes W_{m-k} \oplus  \check{X}'_i \otimes W_{m-k} ) \operp W_{m-k} \otimes W'_{m'-i}. 
\end{align}

The related mixed model yields a realisation of $\varphi_i$ in $S_{m-k,m'}\simeq\mathscr{S}(X'_i\otimes W_{m-k},S_{m-k,m'-i})$, where $S_{m-k,m'-i}$ is a model of the metaplectic representation of $H(W_{m-k}\otimes W'_{m'-i})$. The explicit action of $N_k$ on this space is obtained thanks to Lemma \ref{mix-Heisenberg} :
\begin{align*}
\varphi_i(n)f(\check{x}) &= \rho_{m-k,m'}(\check{c}z_i)f(\check{x}) = \psi(\langle \check{c}z_i,\check{x}\rangle)f(\check{x}),
\end{align*}
where $n=n(c,d)\in N_k$ and $\check{x}\in X'_i\otimes W_{m-k}$.
 
We conclude that the $f \in \mathscr{S}(X'_i\otimes W_{m-k},S_{m-k,m'-i})$ invariant by $N_k$ are those trivial on $X'_i\otimes W_{m-k}\setminus\{0\}$. This provides the following
 
\begin{prop}
The restriction to zero provides an isomorphism of $M_k\times G'_{m'}$-modules between $\mathscr{S}(0,S_{m-k,m'-i})\simeq S_{m-k,m'-i}$ and the space $(S_{m-k,m'})_{N_k}$ of coinvariants of $\varphi_i$ relative to $N_k$.
\end{prop}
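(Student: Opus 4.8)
The plan is to work entirely inside the realisation of $\varphi_i$ on $\mathscr{S}(X'_i\otimes W_{m-k},S_{m-k,m'-i})$ obtained just above, on which $N_k$ acts by multiplication by the characters $\check{x}\mapsto\psi(\langle\check{c}z_i,\check{x}\rangle)$, where $n=n(c,d)\in N_k$. Thus the coinvariant space $(S_{m-k,m'})_{N_k}$ is by definition the quotient of this function space by the augmentation subspace $A:=\mathrm{span}\{\varphi_i(n)f-f\mid n\in N_k,\ f\in\mathscr{S}(X'_i\otimes W_{m-k},S_{m-k,m'-i})\}$, and proving the proposition amounts to showing that $A$ coincides with the subspace $\mathscr{S}_{\neq0}$ of functions vanishing at the origin, which is exactly the kernel of evaluation at $0$.

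First I would check the inclusion $A\subseteq\mathscr{S}_{\neq0}$: every character $\psi(\langle\check{c}z_i,\cdot\rangle)$ takes the value $1$ at $\check{x}=0$, so each generator $\varphi_i(n)f-f$ vanishes there. For the reverse inclusion I would invoke the fact obtained in the preceding paragraph, namely that $\check{x}=0$ is the only point at which all the characters $\psi(\langle\check{c}z_i,\check{x}\rangle)$ are trivial; this is where the condition $\rank z_i=i$ (equivalently, surjectivity of $c\mapsto\check{c}z_i$ onto $\check{X}'_i\otimes W_{m-k}$) enters, and it is the only non-formal ingredient of the argument. Granting it, fix $\check{x}\neq0$ and choose $n=n(c,d)\in N_k$ with $\lambda:=\psi(\langle\check{c}z_i,\check{x}\rangle)\neq1$; then for every $s\in S_{m-k,m'-i}$ the function $s\,\delta_{\check{x}}$ supported at $\check{x}$ with value $s$ equals $(\lambda-1)^{-1}(\varphi_i(n)-\Id)$ applied to it, hence lies in $A$. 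Since such functions span $\mathscr{S}_{\neq0}$, we conclude $A=\mathscr{S}_{\neq0}$.

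It then follows that evaluation at $0$, which identifies $\mathscr{S}(X'_i\otimes W_{m-k},S_{m-k,m'-i})/\mathscr{S}_{\neq0}$ with $\mathscr{S}(0,S_{m-k,m'-i})\simeq S_{m-k,m'-i}$, descends to an isomorphism $(S_{m-k,m'})_{N_k}\xrightarrow{\ \sim\ }S_{m-k,m'-i}$. For the $M_k\times G'_{m'}$-equivariance I would note that $N_k$ is normal in $P_k\times G'_{m'}$ — it is normal in $P_k$, and $G'_{m'}$ centralises $G_m\supseteq P_k$ inside $\Sp(W_m\otimes W'_{m'})$ — so the augmentation subspace $A$ is stable under $M_k\times G'_{m'}=(P_k\times G'_{m'})/N_k$, the quotient carries the residual action, and by construction the quotient map is precisely evaluation at $0$, whence it intertwines the two actions. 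The only potential obstacle, the common-zero-locus computation, is thus already disposed of in the text; what remains is the routine principle that the coinvariants of a module on which a group acts by a family of multiplication characters equal the functions supported on the common zero set of those characters, together with the normality of $N_k$ needed for equivariance.
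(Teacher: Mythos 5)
Your proposal is correct and follows essentially the same route as the paper: the paper likewise deduces the proposition directly from the mixed-model formula showing that $N_k$ acts through the characters $\check{x}\mapsto\psi(\langle\check{c}z_i,\check{x}\rangle)$ together with the observation that their common fixed locus is $\{0\}$, so that only functions supported at the origin survive. Your contribution is merely to spell out the routine steps the paper leaves implicit (identifying the augmentation subspace with the functions vanishing at $0$ via the delta-function trick, and using normality of $N_k$ in $P_k\times G'_{m'}$ for the $M_k\times G'_{m'}$-equivariance), which is consistent with the paper's argument.
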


It is easy to see that if $(a,g')$ belongs $\St(z_i)$ then $g'$ stabilizes $X'_i$. Hence, $\St(z_i)G_{m-k}$ is contained in $M_k\times P'_i$, where $P'_i$ is the stabilizer of $X'_i\subset W'_{m'}$. The mixed model formulas for the orthogonal sum decomposition (\ref{mix-mod-2}) allow us to compute the action of $G_{m-k}\times P'_i$ by $\omega^\flat_{m-k,m'}$ on $\mathscr{S}(W_{m-k}\otimes X'_i,S_{m-k,m'-i})$. From this we deduce the action of $\St(z_i)  G_{m-k} \subset M_k\times P'_i$ by the representation $\varphi_i$ on $S_{m-k,m'-i}$ is
\begin{align}\label{varphi-act}
\varphi_i(m,m'n')=\omega^\flat_{m-k,m'-i}(u,u').
\end{align}
for $m=m(a,u)\in M_k$, $m'=m'(a',u')\in M'_i$ and $n'\in N'_i$. We remark that the action of the unipotent radical $N'_i$ is trivial.

To finish our computation we need to make explicit the elements of the stabilizer $\St(z_i)$ of $z_i$ in $\GL_k\times G'_{m'}$. If $(a,g')\in \St(z_i)$, then $a$ preserves the kernel $X_{k-i}$ of $z_i$. Hence, $\St(z_i)$ is contained in the parabolic $Q_{k-i}\times P'_i$ of $\GL_k\times G'_{m'}$, where $Q_{k-i}$ is the stabilizer of $X_{k-i}$ in $X_k$. Moreover, for the given basis $z_i$ equals $\left(\begin{smallmatrix}0_{k-i,i}&1_{i,i}\\0&0\end{smallmatrix}\right)$, so $(a,g')$ belongs to $\St(z_i)$, if and only if 
$$
a = \left(\begin{matrix} a_{k-i} & \ast\\ & a_i \end{matrix}\right) \mbox{, }g' = \left(\begin{matrix} a_i & \ast & \ast\\ & u' & \ast \\  & & \check{a}_i^{-1}\end{matrix}\right). 
$$
Considering $a\in Q_{k-i}$ and $g'=m'n'\in P'_i$ (where $m'=m'(a_i,u')$) as in the previous line, the action (\ref{varphi-act}) becomes
$$
\varphi_i(m,m'n')=\omega^\flat_{m-k,m'-i}(u,u'),
$$ 
denoting $(\mathscr{S}_i)_{N_k}$ by $\tau_i$, the isomorphism (\ref{suc-quot}) turns into
$$
\tau_{i+1}/\tau_i \simeq \Ind_{\St(z_i)  G_{m-k}}^{M_k\times G'_{m'}} 1 \otimes\omega^\flat_{m-k,m'-i}.
$$
Using transitivity, we need to compute the induction from $\St(z_i)  G_{m-k}$ to $Q_{k-i}G_{m-k}\times P'_i$. This comes down to induce the trivial representation from $\GL_i$ to $\GL_i\times\GL_i$ (the inclusion being diagonal). Part (a) follows then from Lemma \ref{char-tens-ind}. Part (b) has an analogous proof.  

\subsection{The main theorem}
Let $G_m$ be a symplectic, orthogonal or unitary group in Witt tower $\mathscr{T}$, let $T$ be its torus of diagonal matrices, and $B$ the Borel of upper triangular matrices. The set of standard Levi subgroups of $G_m$ (with respect to $T$ and $B$) can be parametrized by sequences $\mathbf{t}=(t_1,\ldots,t_r)$, such that $|\mathbf{t}|= \sum_{i=1}^r t_i$ is not greater than $m$. The corresponding Levi subgroup $L_{\mathbf{t}}$ is equal to $\GL_{t_1}\times \cdots \times \GL_{t_r} \times G_{m-|\mathbf{t}|}$. For this Levi, we denote parabolic induction by  $R_{\mathbf{t}}$ and parabolic restriction by $\prescript{*}{}{R_{\mathbf{t}}}$.
 
Let $\pi$ be an irreducible representation of $G_m$. There exists a set $\mathbf{t}(\pi)=(t_1,\ldots,t_r)$ of positive integers with $|\mathbf{t}(\pi)|\leq m$, and cuspidal irreducible representations $\sigma_i$ of $\GL_{t_i}$ and $\varphi$ of $G_{m-|\mathbf{t}(\pi)|}$ such that 
$$
[\pi] = [\sigma_1,\ldots,\sigma_r,\varphi]
$$ 
is the cuspidal support of $\pi$. 

Following Section 2 in \cite{AM}, when $\pi$ is a cuspidal representation of $G_m$, we denote by $\theta^\flat(\pi)$ (resp. $m'(\pi)$) its first occurrence (resp. first occurrence index).
 
\begin{thm}\label{HoweCuspidalSupport}
Let $\pi$ be an irreducible representation of $G_m$ with cuspidal support $[\sigma_1,\ldots,\sigma_r,\varphi]$. Let $\pi'$ belong to $\Theta_{m,m'}^\flat(\pi)$, and denote $\mathbf{t}(\pi)$ and $\mathbf{t}(\pi')$ by $\mathbf{t}$ and $\mathbf{t}'$ respectively.
 
\emph{a)} If $|\mathbf{t}'|\geq |\mathbf{t}|$ then
$$
[\pi']=[\sigma_1,\ldots,\sigma_r,1,\ldots,1,\theta^\flat(\varphi)].
$$
 
\emph{b)} When $|\mathbf{t}'| < |\mathbf{t}|$ there exists a sequence $i_1\leq\cdots\leq i_d$, with $d=|\mathbf{t}| - |\mathbf{t}'|$, such that $\sigma_{i_k}=1$ and
$$
[\pi']=[\sigma_1,\ldots, \widehat{\sigma_{i_1}},\ldots,\widehat{\sigma_{i_d}},\ldots,\sigma_r,\theta^\flat(\varphi)].
$$
\end{thm}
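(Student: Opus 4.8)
The plan is to follow the proof of Theorem~2.5 in \cite{Kudla}: reduce the statement to one about parabolic restrictions of the Weil representation — completely controlled by Corollary~\ref{coinv-submod} — and run an induction on the size $|\mathbf{t}|$ of the $\GL$-part of the cuspidal support of $\pi$. Two elementary facts are used throughout: $\pi'$ occurs in $\Theta^\flat_{m,m'}(\pi)$ exactly when $\pi\otimes\pi'$ is a constituent of $\omega^\flat_{m,m'}$, so that the isotypic component $\pi\otimes\Theta^\flat_{m,m'}(\pi)$ is a subrepresentation of $\omega^\flat_{m,m'}$; and, over a finite field, Harish-Chandra induction and restriction being adjoint, the cuspidal support of an irreducible representation is detected by Frobenius reciprocity from its parabolic restrictions. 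Hence if $\sigma_1\otimes\pi_0$ is a constituent of $\prescript{*}{}{R}_{t_1}(\pi)$, where $t_1$ is the block size of $\sigma_1$ and $[\pi_0]=[\sigma_2,\dots,\sigma_r,\varphi]$, then $\sigma_1\otimes\pi_0\otimes\pi'$ is a constituent of $\prescript{*}{}{R}_{t_1}(\omega^\flat_{m,m'})$; expanding the latter by Corollary~\ref{coinv-submod}(a) one asks which summand, indexed by $0\le i\le\min\{t_1,m'\}$, it sits in, the $\GL_{t_1}$-part of the $i$-th summand being parabolically induced from the trivial character of the $\GL_{t_1-i}$-block of $Q_{t_1-i}$ together with an irreducible constituent of $\mathbf{R}_{\GL_i}$.

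The base case $|\mathbf{t}|=0$, i.e. $\pi=\varphi$ cuspidal, I would treat by induction on $m'$: the cases $m'<l'$ (where $\Theta^\flat_{m,m'}(\varphi)=0$) and $m'=l'$ (where $\Theta^\flat_{m,l'}(\varphi)=\theta^\flat(\varphi)$ is cuspidal) are the defining properties of the first occurrence recalled from \cite{AM}. For $m'>l'$, apply Corollary~\ref{coinv-submod}(b) with $k'=1$: its $i=1$ summand is, on the $G_m$-factor, a genuine parabolic induction from $P_1\subsetneq G_m$, hence has vanishing $\pi$-isotypic part by cuspidality of $\pi$; only the $i=0$ summand $\mathbf{1}_{\GL'_1}\otimes\omega^\flat_{m,m'-1}$ survives (the $\GL$-Levi acting trivially, $\omega^\flat$ being the $\flat$-normalized model). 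This yields $\prescript{*}{}{R}'_1(\Theta^\flat_{m,m'}(\varphi))\simeq\mathbf{1}_{\GL'_1}\otimes\Theta^\flat_{m,m'-1}(\varphi)$, so by Frobenius reciprocity the cuspidal support of any constituent of $\Theta^\flat_{m,m'}(\varphi)$ is that of a constituent of $\Theta^\flat_{m,m'-1}(\varphi)$ with one trivial $\GL_1$-character adjoined, closing the induction with $[\pi']=[1,\dots,1,\theta^\flat(\varphi)]$.

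For the inductive step, fix $\sigma_1$ of block size $t_1$. Cuspidality of $\sigma_1$ forces the $\GL_{t_1}$-part of the relevant summand to equal $\sigma_1$ only when $i=t_1$, or when $t_1=1$, $\sigma_1=\mathbf{1}$ and $i=0$ — for $0<i<t_1$ that part is a proper parabolic induction inside $\GL_{t_1}$ (no cuspidal constituent), and for $i=0$, $t_1>1$ it is $\mathbf{1}_{\GL_{t_1}}$, not cuspidal. When $i=t_1$, the block is \emph{transferred}: $\pi'$ is a constituent of the Harish-Chandra induction of $\hat{\sigma}_1\otimes\pi''$ from $\GL'_{t_1}\times G'_{m'-t_1}$ for some $\pi''\in\Theta^\flat_{m-t_1,m'-t_1}(\pi_0)$, so $[\pi']$ is $[\pi'']$ with $\hat{\sigma}_1$ adjoined and $|\mathbf{t}'|-|\mathbf{t}|=|\mathbf{t}(\pi'')|-|\mathbf{t}(\pi_0)|$. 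When $i=0$ (so $\sigma_1=\mathbf{1}$), the block is \emph{absorbed on the other tower}: $\pi'\in\Theta^\flat_{m-1,m'}(\pi_0)$, so $[\pi']$ is the cuspidal support of such a lift, $|\mathbf{t}|$ decreases by one, and $|\mathbf{t}'|$ is unchanged. Feeding the induction hypothesis for $\pi_0$ into each case and reattaching the peeled-off block yields the two alternatives of the theorem, the integer $d$ in part~(b) counting the trivial $\GL_1$-blocks absorbed rather than transferred over the induction; the discrepancy between $\hat{\sigma}_1$ and $\sigma_1$ is harmless, as both label the same Harish-Chandra series of the classical group.

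The step I expect to be the main obstacle is this last bookkeeping: one must show that a non-trivial cuspidal block is always transferred, never absorbed; that the trivial $\GL_1$-blocks absorbed on the two towers are correctly paired; and that the net combinatorial effect is exactly ``adjoin trivial characters and pass to $\theta^\flat(\varphi)$'' in case~(a) versus ``delete trivial characters and pass to $\theta^\flat(\varphi)$'' in case~(b) — equivalently, that the two Witt towers interact at the level of $\GL$-blocks only through trivial characters. Making this rigorous calls for tracking the Howlett--Lehrer \cite{HL} (relative Weyl group) structure of the cuspidal supports through the successive parabolic restrictions and pinning down, at each stage, which summand of Corollary~\ref{coinv-submod} can carry a prescribed $\GL$-cuspidal support — the same combinatorial core that is resolved, in the unipotent case, in \cite{AMR}.
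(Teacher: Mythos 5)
Your overall strategy is the paper's: combine Corollary \ref{coinv-submod} with Frobenius reciprocity, note that cuspidality of the peeled block $\sigma_1$ kills every summand except $i=0$ (which forces $\sigma_1=1$, $t_1=1$ and $\pi'\in\Theta^\flat_{m-1,m'}(\varphi_1)$, the ``absorbed'' case) and $i=t_1$ (which forces $\sigma'_1=\sigma_1$ up to contragredient and $\varphi'_1\in\Theta^\flat_{m-t_1,m'-t_1}(\varphi_1)$, the ``transferred'' case), and then induct on the number of blocks. Your closing worry is largely unfounded: the fact that a non-trivial cuspidal block can only be transferred is already contained in your own case analysis (absorption requires $\sigma_1=1$ and $t_1=1$), and no Howlett--Lehrer tracking is needed --- the paper finishes part (a) by feeding the inductive hypothesis into each of the two cases, and part (b) by observing that the absorption case must occur at least $d=|\mathbf{t}|-|\mathbf{t}'|$ times. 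Your remark that the discrepancy between $\hat{\sigma}_1$ and $\sigma_1$ is harmless for cuspidal supports of classical groups is correct (the paper passes over this point silently).

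The genuine gap is in your base case ($\pi=\varphi$ cuspidal, $m'>l'$). Peeling with $k'=1$ only constrains those constituents $\pi'$ of $\Theta^\flat_{m,m'}(\varphi)$ with $\prescript{*}{}{R}'_1(\pi')\neq 0$, i.e.\ whose cuspidal support contains a $\GL_1$-block; a hypothetical constituent that is cuspidal, or whose support has only $\GL$-blocks of size at least $2$, is invisible to $\prescript{*}{}{R}'_1$, so your induction on $m'$ says nothing about it and the conclusion $[\pi']=[1,\ldots,1,\theta^\flat(\varphi)]$ is not established for it. The paper avoids this by restricting along the full $\GL$-part of the unknown support of $\pi'$: writing $[\pi']=[\sigma'_1,\ldots,\sigma'_{r'},\varphi']$ and choosing $\sigma'$ on $\GL_{|\mathbf{t}'|}$ with cuspidal support $[\sigma'_1,\ldots,\sigma'_{r'}]$ and $\pi'\subseteq R'_{|\mathbf{t}'|}(\sigma'\otimes\varphi')$, it applies Corollary \ref{coinv-submod}(b) with $k'=|\mathbf{t}'|$; cuspidality of $\pi$ leaves only the $i=0$ term, which forces $\sigma'=1$ on $\GL_{|\mathbf{t}'|}$ (so every block of $[\pi']$ is a trivial $\GL_1$-character) and $\varphi'\in\Theta^\flat_{m,m'-|\mathbf{t}'|}(\pi)$ cuspidal, whence $\varphi'=\theta^\flat(\pi)$ and $m'-|\mathbf{t}'|=l'$ by the first-occurrence results of \cite{AM}. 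Note that this last input --- a cuspidal lift of a cuspidal representation occurs only at the first occurrence --- is needed in your version as well and is not one of the two ``defining properties'' you list. Replacing your $k'=1$ peeling by this one-shot restriction (or peeling with $k'$ equal to the size of an actual block of $[\pi']$) closes the gap, after which your inductive step coincides with the paper's proof.
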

\begin{proof}
First, take $\pi$ cuspidal and assume that $m'>m'(\pi)$. Let $[\pi']=[\sigma'_1,\ldots,\sigma'_{r'},\varphi']$ and let $\sigma'$ be an irreducible representation of $\GL_{|\mathbf{t}'|}$ with cuspidal support $[\sigma']=[\sigma'_1,\ldots,\sigma'_{r'}]$ such that $\pi'$ is an irreducible component of $R'_{|\mathbf{t}'|}(\sigma'\otimes\varphi')$.

Since
$$
\langle\omega^\flat_{m,m'},\pi\otimes\pi'\rangle \leq \langle\omega^\flat_{m,m'},\pi\otimes R'_{|\mathbf{t}'|}(\sigma'\otimes\varphi')\rangle,
$$ 
Frobenius reciprocity and Corollary \ref{coinv-submod} imply that
$$
\langle\omega^\flat_{m,m'},\pi\otimes\pi'\rangle \leq \sum_{i=0}^{\min\{m,|\mathbf{t}'|\}}\langle \tau'_{i+1}/\tau'_i,\pi\otimes\sigma'\otimes\varphi'\rangle.
$$
Due to the fact that $\pi$ is cuspidal, only the term corresponding to $i=0$ can contribute, and that term gives  
$$
\langle \omega^\flat_{m,m'-|\mathbf{t}'|},\pi\otimes\sigma'\otimes\varphi'\rangle=\langle 1,\sigma'\rangle\langle \omega^\flat_{m,m'-|\mathbf{t}'|},\pi\otimes\varphi'\rangle,
$$ 
so in this case we have $\sigma'=1$, and $\varphi'$ is the first occurrence $\theta^\flat(\pi)$ of $\pi$. Therefore
$$
[\pi']=[1,\ldots,1,\theta^\flat(\pi)].
$$
This proves part (a) in the case when $r=0$, i.e. when $\pi$ is cuspidal.

Now, suppose $\pi$ is non-cuspidal with support $[\pi]=[\sigma_1,\ldots,\sigma_r,\varphi]$. There is an irreducible representation $\varphi_1$ of $G_{m-t_1}$ with cuspidal support $[\varphi_1]=[\sigma_2,\ldots,\sigma_r,\varphi]$ such that $\pi$ is an irreducible component of $R_{t_1}(\sigma_1\otimes\varphi_1)$. Once again, Frobenius reciprocity and Corollary \ref{coinv-submod} imply that
$$
\langle\omega^\flat_{m,m'},\pi\otimes\pi'\rangle \leq  \sum_{i=0}^{\min\{t_1,m'\}}\langle \tau_{i+1}/\tau_i,\sigma_1\otimes\varphi_1\otimes\pi'\rangle.
$$ 
Since $\sigma_1$ is cuspidal only the terms corresponding to $i=0$ and $i=t_1$ contribute to the sum.

\emph{I}. The term corresponding to $i=0$ is 
$$
\langle 1\otimes\omega^\flat_{m-t_1,m'},\sigma_1\otimes\varphi_1\otimes \pi'\rangle=\langle 1,\sigma_1\rangle\langle\omega^\flat_{m-t_1,m'},\varphi_1\otimes \pi'\rangle.
$$
This is non-trivial if and only if $\sigma_1=1$ and $\pi'$ belongs to $\Theta^\flat_{m-t_1,m'}(\varphi_1)$. The former equality implies that the trivial character of $\GL_{t_1}$ is cuspidal so that $t_1=1$.
 
\emph{II}. The term corresponding to $i=t_1$ yields 
$$
\langle\mathbf{R}_{\GL_{t_1}}\otimes\omega^\flat_{m-t_1,m'-t_1},\sigma_1\otimes\varphi_1\otimes \prescript{*}{}{R'}_{t_1}(\pi')\rangle.
$$
This is non-zero only if there is a simple $M'_{t_1}$-submodule $\sigma'_1\otimes\varphi'_1$ of $\prescript{*}{}{R'}_{t_1}(\pi')$ such that 
\begin{align*}
\langle\mathbf{R}_{\GL_{t_1}}& \otimes\omega^\flat_{m-t_1,m'-t_1},
 \sigma_1\otimes\varphi_1\otimes\sigma'_1\otimes\varphi'_1\rangle \neq 0.
\end{align*}
This in turn equals  $\langle \sigma_1,\sigma'_1\rangle\langle\omega^\flat_{m-t_1,m'-t_1},\varphi_1\otimes\varphi'_1\rangle$. Hence, in this case we obtain $\sigma'_1=\sigma_1$ and $\varphi'_1\in\Theta^\flat_{m-t_1,m'-t_1}(\varphi_1)$.

Suppose that $|\mathbf{t}'| \geq |\mathbf{t}|$. If the first case above holds, since $|\mathbf{t}'| \geq |\mathbf{t}\setminus \mathrm{t}_1|$, we can apply an inductive argument to $\pi'\in\Theta^\flat_{m-t_1,m'}(\varphi_1)$ and $\varphi_1$ in order to obtain
$$
[\pi']=[\sigma_2,\ldots,\sigma_r,1,\ldots,1,\theta^\flat(\varphi)].
$$ 
Performing a permutation yields (a), because in this case $\sigma_1=1$.   If the second case above holds then $t'_1=t_1$ and $\sigma'_1=\sigma_1$. Since $|\mathbf{t}'\setminus \mathrm{t}_1| \geq |\mathbf{t}\setminus \mathrm{t}_1|$, the inductive hypothesis applied to $\varphi'_1$ and $\varphi_1$ yields (a).

Now suppose that $|\mathbf{t}'| < |\mathbf{t}|$.  In this situation the first case above must occur at least $d=|\mathbf{t}| - |\mathbf{t}'|$ times. This would yield a subsequence $\sigma_{i_i},\ldots,\sigma_{i_d}$ is as in part (b) of the Theorem \ref{HoweCuspidalSupport}. 
\end{proof}  

This theorem can be restated in terms of Harish-Chandra series. Let $\boldsymbol{\sigma}$ denote the cuspidal representation $\sigma_1\otimes\ldots\otimes\sigma_r$ of $\GL_{\mathbf{t}}=\GL_{t_1}\times\ldots\GL_{t_r}$, and let $l'$ be the first occurrence index of $\varphi$.

\begin{thm}\label{HoweHarish-Chandra}
The Howe correspondence $\Theta_{m,m'}^\flat$ sends $\Irr(G_m,\boldsymbol{\sigma}\otimes\varphi)$ to $\mathscr{R}(G'_{m'},\boldsymbol{\sigma}'\otimes\theta^\flat(\varphi))$ whenever $m'\geq l'$ and to zero otherwise. In the first case, $\boldsymbol{\sigma}'=\boldsymbol{\sigma}\otimes 1$ if $|\mathbf{t}'|\geq |\mathbf{t}|$, and $\boldsymbol{\sigma}=\boldsymbol{\sigma}'\otimes 1$ otherwise. 
\end{thm}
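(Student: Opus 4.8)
The plan is to derive Theorem \ref{HoweHarish-Chandra} as a direct reformulation of Theorem \ref{HoweCuspidalSupport}, translating statements about cuspidal supports into statements about Harish-Chandra series. Recall that $\Irr(G_m,\boldsymbol{\sigma}\otimes\varphi)$ is, by definition, the set of irreducible constituents of $R_{\mathbf{t}}(\boldsymbol{\sigma}\otimes\varphi)$, equivalently the set of $\pi\in\Irr(G_m)$ whose cuspidal support $[\pi]$ equals $[\sigma_1,\ldots,\sigma_r,\varphi]$ up to the usual equivalence (permutation of the $\GL$-factors and association). So the strategy is: take $\pi\in\Irr(G_m,\boldsymbol{\sigma}\otimes\varphi)$, take any $\pi'$ occurring in $\Theta^\flat_{m,m'}(\pi)$, and read off from Theorem \ref{HoweCuspidalSupport} that the cuspidal support of $\pi'$ is $[\sigma_1,\ldots,\sigma_r,1,\ldots,1,\theta^\flat(\varphi)]$ in case (a) and $[\sigma_1,\ldots,\widehat{\sigma_{i_1}},\ldots,\widehat{\sigma_{i_d}},\ldots,\sigma_r,\theta^\flat(\varphi)]$ in case (b); in both cases the $G'$-part of the cuspidal support is $\theta^\flat(\varphi)$, and the $\GL$-part is exactly the cuspidal support of $\boldsymbol{\sigma}'$ as described (all removed or all added factors are trivial characters of $\GL_1$). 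This immediately places $\pi'$ in $\mathscr{R}(G'_{m'},\boldsymbol{\sigma}'\otimes\theta^\flat(\varphi))$.

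First I would handle the vanishing statement. By Theorem \ref{HoweCuspidalSupport} (or rather the argument in its proof applied to the cuspidal piece $\varphi$), $\Theta^\flat_{m,m'}(\pi)$ is nonzero only if $m'$ is at least the first occurrence index of $\varphi$, i.e. $m'\geq l'$; more precisely, one reduces along the $\GL$-factors of the cuspidal support via the filtration of Corollary \ref{coinv-submod} until only $\varphi$ remains, and a nonzero image forces $\omega^\flat_{m-|\mathbf{t}|,m'-?}$ restricted to $\varphi\otimes(\cdot)$ to be nonzero, which by definition of $l'$ needs $m'\geq l'$. Conversely, when $m'\geq l'$ one exhibits a nonzero constituent. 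I would state this cleanly and point to the relevant lines in the proof of Theorem \ref{HoweCuspidalSupport}.

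Next I would verify the identification of the $\GL$-part $\boldsymbol{\sigma}'$. In case (a), $|\mathbf{t}'|\geq|\mathbf{t}|$: Theorem \ref{HoweCuspidalSupport}(a) gives $[\pi']=[\sigma_1,\ldots,\sigma_r,1,\ldots,1,\theta^\flat(\varphi)]$ with $|\mathbf{t}'|-|\mathbf{t}|$ copies of the trivial character of $\GL_1$, so the cuspidal support of $\GL_{\mathbf{t}'}$-part is $\boldsymbol{\sigma}\otimes 1\otimes\cdots\otimes 1$, i.e. $\boldsymbol{\sigma}'=\boldsymbol{\sigma}\otimes 1$ (absorbing all the extra trivial factors into a single ``$\otimes 1$'' on the appropriate $\GL$-block, as in the statement). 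In case (b), $|\mathbf{t}'|<|\mathbf{t}|$: Theorem \ref{HoweCuspidalSupport}(b) says the support of $\pi'$ is obtained from that of $\pi$ by deleting $d=|\mathbf{t}|-|\mathbf{t}'|$ factors, each equal to the trivial character of $\GL_1$; hence $\boldsymbol{\sigma}=\boldsymbol{\sigma}'\otimes 1$ with $d$ trivial factors. One should remark that the deleted $\sigma_{i_k}=1$ forces $t_{i_k}=1$ (the trivial character of $\GL_t$ is cuspidal only for $t=1$), which is exactly what was observed in step I of the proof of Theorem \ref{HoweCuspidalSupport}, so the notation $\boldsymbol{\sigma}=\boldsymbol{\sigma}'\otimes 1$ is unambiguous up to reordering the $\GL$-blocks.

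The only mild subtlety — and the step I expect to require the most care — is bookkeeping with the equivalence relation on cuspidal pairs: Harish-Chandra series depend on the cuspidal datum only up to $G'_{m'}$-conjugacy, which for these classical groups amounts to permuting the linear factors and replacing $\sigma_i$ by its conjugate-dual when the corresponding block is ``symmetric''. I would note that Theorem \ref{HoweCuspidalSupport} produces a representative of the support of $\pi'$ that already has the trivial factors grouped together, so no genuine issue arises; one just needs to observe that $\theta^\flat(\varphi)$ is indeed cuspidal (it is, being a first occurrence of a cuspidal representation, by \cite{AM}) so that $(\GL_{\mathbf{t}'}\times G'_{m'-|\mathbf{t}'|},\boldsymbol{\sigma}'\otimes\theta^\flat(\varphi))$ is a legitimate cuspidal pair, and hence $\mathscr{R}(G'_{m'},\boldsymbol{\sigma}'\otimes\theta^\flat(\varphi))$ is the Harish-Chandra series it names. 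Assembling these observations gives the theorem.
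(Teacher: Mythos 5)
Your proposal is correct and follows essentially the same route as the paper: the theorem is deduced directly from Theorem \ref{HoweCuspidalSupport} by reading off the cuspidal support of $\pi'$ (with $G'$-part $\theta^\flat(\varphi)$, forcing $m'-l'=|\mathbf{t}'|\geq 0$, hence $m'\geq l'$) and identifying the $\GL$-part $\boldsymbol{\sigma}'$ from cases (a) and (b). Your extra remarks on the conjugacy bookkeeping for cuspidal pairs and on $t_{i_k}=1$ are consistent with, and only slightly more explicit than, the paper's argument.
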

\begin{proof}
Suppose there is a representation $\pi$ in $\Irr(G_m,\boldsymbol{\sigma}\otimes\varphi)$ such that $\Theta^\flat_{m,m'}(\pi)$ is non-zero, and let $\pi'$ belong to it. According to Theorem \ref{HoweCuspidalSupport}, the representation $\pi'$ belongs to a Harish-Chandra series $\Irr(G'_{m'},\boldsymbol{\sigma}'\otimes\varphi')$, where $\varphi'=\theta^\flat(\varphi)$. In this case $m'-l'=|\mathbf{t}'|\geq 0$, whence $m'\geq l'$. The rest of the statement follows also from Theorem \ref{HoweCuspidalSupport}.
\end{proof}
If we ask for the representation $\boldsymbol{\sigma}\otimes\varphi$ of $\GL_{\mathbf{t}}\times G_l$ to be also unipotent, then $\mathbf{t}$ becomes $\mathbf{t}=(1^{m-l})$, the representation $\boldsymbol{\sigma}$ becomes the trivial representation of the torus $T_{m-l}$ of diagonal matrices, and $\boldsymbol{\sigma}'$ becomes the trivial representation of the torus $T_{m'-l'}$. Therefore, as a particular case of Theorem \ref{HoweHarish-Chandra} we get Theorem 3.7 in \cite{AMR}.

\begin{cor}\label{HoweUnipotentHC}
The Howe correspondence $\Theta^\flat_{m,m'}$ sends $\Irr(G_m,1\otimes\varphi)$ to $\mathscr{R}(G'_{m'},1\otimes\theta^\flat(\varphi))$ whenever $m'\geq l'$, and to zero otherwise. Moreover, the representation $\varphi$ is unipotent if and only if the same holds for $\theta^\flat(\varphi)$. 
\end{cor}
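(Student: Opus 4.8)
The plan is to deduce Corollary \ref{HoweUnipotentHC} directly from Theorem \ref{HoweHarish-Chandra} by specialising the cuspidal datum on the $\GL$ part to the trivial case. Concretely, I would start by observing that the Harish-Chandra series $\Irr(G_m,1\otimes\varphi)$ in the statement is exactly the series $\Irr(G_m,\boldsymbol{\sigma}\otimes\varphi)$ with $\boldsymbol{\sigma}$ the trivial representation of a torus of diagonal matrices, i.e. with $\mathbf{t}=(1,\ldots,1)$. Since the only cuspidal representation of $\GL_1$ is the trivial one, the tensor factor $\boldsymbol{\sigma}'$ produced by Theorem \ref{HoweHarish-Chandra} --- whether it equals $\boldsymbol{\sigma}\otimes 1$ or is characterised by $\boldsymbol{\sigma}=\boldsymbol{\sigma}'\otimes 1$ --- is again a tensor power of the trivial character of $\GL_1$, hence the trivial representation of a torus; this is what ``$\boldsymbol{\sigma}'=1$'' means in the corollary. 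Thus the first sentence of the corollary is an immediate instance of Theorem \ref{HoweHarish-Chandra}, together with the identity $\theta^\flat(\varphi)=\varphi'$ from that theorem's notation and the equality $l'=m'(\varphi)$ recorded just before it.

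Next I would handle the ``moreover'' clause, i.e. that $\varphi$ is unipotent if and only if $\theta^\flat(\varphi)$ is. This is the only part not already contained in the previous theorems, and it is where I expect the actual content to lie. I would invoke the fact that the correspondence $\Theta^\flat$ is built from G\'erardin's twisted Weil representation $\omega^\flat$, which by Proposition 2.3 of \cite{AMR} preserves unipotent representations. Since $\varphi$ is cuspidal, it lies in a unique Lusztig series $\mathscr{E}(G_l,(s))$; when $s=1$ the representation is unipotent, and first occurrence $\theta^\flat(\varphi)$ of a unipotent cuspidal representation is again unipotent by the cited proposition in \cite{AMR}. For the converse, one runs the same argument for the dual pair $(G'_{m'},G_m)$ in the other direction: by the symmetry of the Howe correspondence, $\varphi$ occurs in $\Theta^\flat_{m',m}(\theta^\flat(\varphi))$, so if $\theta^\flat(\varphi)$ is unipotent then so is $\varphi$ for the same reason. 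Alternatively I would point out that first occurrence is an injective operation on cuspidal representations, so the unipotent-to-unipotent map it induces is a bijection onto the unipotent cuspidals of the target tower.

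The main obstacle, such as it is, is purely bookkeeping: one must make sure that the specialisation $\mathbf{t}=(1^{m-l})$ really does force $\boldsymbol{\sigma}$ and $\boldsymbol{\sigma}'$ to be \emph{trivial} characters of tori and not merely characters supported on $\GL_1$ factors --- but this is automatic since a cuspidal irreducible representation of $\GL_1(\mathbb{F}_q)$ (or $\GL_1(\mathbb{F}_{q^2})$) that appears in the cuspidal support of a \emph{unipotent} representation of $G_m$ must be the trivial character, as unipotent cuspidal support consists only of unipotent cuspidals and the only unipotent character of $\GL_1$ is trivial. Hence both branches ``$|\mathbf{t}'|\ge|\mathbf{t}|$'' and ``$|\mathbf{t}'|<|\mathbf{t}|$'' of Theorem \ref{HoweHarish-Chandra} collapse to $\boldsymbol{\sigma}'=1$, and the corollary follows. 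I would then note, as the excerpt already does, that this recovers Theorem 3.7 of \cite{AMR}.
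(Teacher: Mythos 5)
Your argument is correct and follows essentially the paper's own route: the corollary is obtained exactly as you describe, by specialising Theorem \ref{HoweHarish-Chandra} to $\boldsymbol{\sigma}=1$ with $\mathbf{t}=(1^{m-l})$, the unipotence clause coming from the compatibility of $\Theta^\flat$ with Lusztig series (Proposition 2.3 of \cite{AMR}), whether phrased via symmetry as you do or read off directly from $s'=(s,1)$ (resp.\ $s=(s',1)$). One harmless slip: the trivial character is \emph{not} the only cuspidal representation of $\GL_1$ (every character of $\GL_1$ is cuspidal), but your argument never needs this, since the hypothesis $\boldsymbol{\sigma}=1$ already forces $\boldsymbol{\sigma}'=1$ in both branches ($\boldsymbol{\sigma}'=\boldsymbol{\sigma}\otimes 1$ or $\boldsymbol{\sigma}=\boldsymbol{\sigma}'\otimes 1$) of the theorem, as you also observe.
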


\section{Howe correspondence and Lusztig correspondence}\label{HoweLusztigCorrespondence}
The purpose of this section is to see the effect of the Lusztig correspondence on the Howe correspondence for unitary dual pairs, and realize the connection this has to results in the previous section. 

\subsection{Centralizers of rational semisimple elements}\label{CentralizersSection}
Let $\mathbf{G}$ be a group defined over $\mathbb{F}_q$, and  $C_\textbf{G}(x)$ be the centralizer of a rational element $x$ in $\textbf{G}$. Denote by $G$ and $C_G(x)$ their groups of rational elements. 

Assume that $\mathbf{G}$ is also connected. In Proposition 5.1 of \cite{Lusztig}, Lusztig found a bijection
\begin{align}\label{LusztigBijection}
 \mathfrak{L}^{G_m}_s : \mathscr{E}(G,(s)) \simeq \mathscr{E}(C_{G^*}(s),(1)).
\end{align}
where $s$ is a rational semisimple element of $\textbf{G}^*$. In Proposition 1.7 of \cite{AMR}, Aubert, Michel and Rouquier extended this bijection to even orthogonal groups. By linearity we get an isometry between the categories $\mathscr{R}(G,(s))$ and $\mathscr{R}(C_{G^*}(s),(1))$ spanned by the Lusztig series $\mathscr{E}(G,(s))$ and $\mathscr{E}(C_{G^*}(s),(1))$ respectively. 

Following Section 1.B in \cite{AMR}, let $\mathbf{G}$ be a classical group of rank $n$ with Frobenius endomorphism $F$, and $\textbf{T}_n\simeq\overline{\mathbb{F}}_q\times\ldots\times\overline{\mathbb{F}}_q$ be its maximal torus of diagonal matrices. By definition, a rational semisimple element is conjugate to an element $(\lambda_1,\ldots,\lambda_n)$ of $\textbf{T}_n$. Let $\nu_\lambda(s)$ the number of times $\lambda$ appears in this list. There is a decomposition
$$
C_\textbf{G}(s)=\prod\mathbf{G}_{[\lambda]}(s),
$$
where $[\lambda]$ is the orbit of $\lambda$ by the action of the Frobenius $F$, intersected with $\{\lambda_1,\ldots,\lambda_n\}$. Each $\mathbf{G}_{[\lambda]}(s)$ is a reductive quasi-simple group of rank $|[\lambda]|\nu_\lambda(s)$. Moreover, if $\lambda\neq \pm 1$, then $\mathbf{G}_{[\lambda]}(s)$ is a unitary or general linear group (possibly over some finite extension of $\mathbb{F}_q$). Additionally :

\noindent (1) If $\mathbf{G}=\boldsymbol{\GL}_n$ is unitary, then $\mathbf{G}_{[\pm 1]}(s)$ is a unitary group.

\noindent (2) If $\mathbf{G}=\boldsymbol{\SO}_{2n+1}$, then $\mathbf{G}_{[-1]}(s)=\boldsymbol{\Or}_{2\nu_{-1}(s)}$, and $\mathbf{G}_{[1]}(s)=\boldsymbol{\SO}_{2\nu_1(s)+1}$.

\noindent (3) If $\mathbf{G}=\boldsymbol{\Or}_{2n}$, then $\mathbf{G}_{[\pm 1]}(s)=\boldsymbol{\Or}_{2\nu_{\pm 1}(s)}$.

In all cases we see that $\mathbf{G}_{[1]}(s)$ is a group of the same kind as $\mathbf{G}$, but of smaller rank. 

\begin{lem}\label{LeviCentralizer}
Let $\mathbf{L}$ be a Levi complement of the parabolic subgroup $\mathbf{P}$ of $\mathbf{G}$, both groups being rational. If $s$ is a semisimple element of $L$, then $\mathbf{L}_{[\lambda]}(s)$ is a Levi subgroup of the parabolic $\mathbf{P}_{[\lambda]}(s)$ of $\mathbf{G}_{[\lambda]}(s)$. Moreover these groups are also rational.
\end{lem}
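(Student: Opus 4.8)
The plan is to reduce everything to the explicit combinatorial description of centralizers given just above the statement, and to exploit the fact that being a Levi of a parabolic is detected by root-datum combinatorics. First I would fix a rational maximal torus $\mathbf{T}$ of $\mathbf{L}$ contained in a rational maximal torus of $\mathbf{G}$, and a rational Borel $\mathbf{B}$ of $\mathbf{G}$ inside $\mathbf{P}$ containing $\mathbf{T}$, exactly as in the proof of Proposition \ref{RationalLeviFqRank}. Since $s\in L$ is semisimple it may be conjugated into $\mathbf{T}$, so $s\in\mathbf{T}$; then $\mathbf{G}_{[\lambda]}(s)$ and $\mathbf{L}_{[\lambda]}(s)$ are the quasi-simple factors of $C_{\mathbf{G}}(s)$ and $C_{\mathbf{L}}(s)$ indexed by the eigenvalue orbit $[\lambda]$. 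Because $C_{\mathbf{L}}(s)=\mathbf{L}\cap C_{\mathbf{G}}(s)$, it suffices to understand how the standard decomposition of $C_{\mathbf{G}}(s)$ into a product over eigenvalue orbits interacts with passing to the Levi $\mathbf{L}$.

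Next I would use the block structure of classical groups. A standard rational Levi $\mathbf{L}$ of a classical group $\mathbf{G}$ is of the form $\boldsymbol{\GL}_{t_1}\times\cdots\times\boldsymbol{\GL}_{t_r}\times\mathbf{G}_{m-|\mathbf{t}|}$ (the notation already set up in the excerpt), and the eigenvalue multiset of $s\in\mathbf{T}\subset\mathbf{L}$ is the disjoint union of its multiplicities in each $\GL$-block and in the smaller classical block. Taking centralizers commutes with this direct product decomposition: $C_{\mathbf{L}}(s)=\prod_i C_{\boldsymbol{\GL}_{t_i}}(s)\times C_{\mathbf{G}_{m-|\mathbf{t}|}}(s)$, and each factor again decomposes over eigenvalue orbits. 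Collecting, for a fixed orbit $[\lambda]$ one gets that $\mathbf{L}_{[\lambda]}(s)$ is a product of $\GL$-type factors (one from each $\GL$-block where $\lambda$ occurs, paired up appropriately for the $[\lambda]$ versus $[\lambda^{-1}]$ bookkeeping) times the corresponding $[\lambda]$-factor of $C_{\mathbf{G}_{m-|\mathbf{t}|}}(s)$, whereas $\mathbf{G}_{[\lambda]}(s)$ is the single quasi-simple group (linear/unitary for $\lambda\neq\pm1$, classical of the same type as $\mathbf{G}$ for $\lambda=\pm1$) built from the total multiplicity. A short verification then shows $\mathbf{L}_{[\lambda]}(s)$ embeds in $\mathbf{G}_{[\lambda]}(s)$ as the standard Levi determined by the block sizes, and that the parabolic $\mathbf{P}_{[\lambda]}(s):=\mathbf{P}\cap\mathbf{G}_{[\lambda]}(s)$ (equivalently, the parabolic generated by $\mathbf{L}_{[\lambda]}(s)$ and the relevant part of $C_{\mathbf{B}}(s)$) has $\mathbf{L}_{[\lambda]}(s)$ as Levi complement — this is because $\mathbf{B}\cap\mathbf{G}_{[\lambda]}(s)$ is a Borel of $\mathbf{G}_{[\lambda]}(s)$ contained in it, by the same argument as in Proposition \ref{RationalLeviFqRank}, so $\mathbf{L}_{[\lambda]}(s)$ contains a maximal torus and is generated by it together with a subset of the simple root subgroups, hence is a standard Levi.

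Finally, rationality: $\mathbf{T}$, $\mathbf{B}$, $\mathbf{P}$, $\mathbf{L}$ are all $F$-stable by hypothesis, and $s$ is rational, so $C_{\mathbf{G}}(s)$, $C_{\mathbf{L}}(s)$ and the eigenvalue-orbit decomposition are $F$-stable by construction (the orbit $[\lambda]$ is an $F$-orbit of eigenvalues, precisely so that each factor $\mathbf{G}_{[\lambda]}(s)$ is $F$-stable). Hence $\mathbf{L}_{[\lambda]}(s)$ and $\mathbf{P}_{[\lambda]}(s)$ are rational, and $\mathbf{P}_{[\lambda]}(s)$ is a rational parabolic of $\mathbf{G}_{[\lambda]}(s)$ with rational Levi complement $\mathbf{L}_{[\lambda]}(s)$. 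I expect the main obstacle to be purely bookkeeping: keeping straight, for the orbits $[\lambda]$ with $\lambda\neq\pm1$, how a $\GL$-block of $\mathbf{L}$ over $\mathbb{F}_q$ contributes a $\GL$-block over a possibly larger field $\mathbb{F}_{q^{|[\lambda]|}}$ to both $\mathbf{L}_{[\lambda]}(s)$ and $\mathbf{G}_{[\lambda]}(s)$, and that the inclusion of these matches the standard-Levi inclusion at the level of the twisted root data — together with the separate (but entirely analogous) treatment of the $\lambda=\pm1$ factors in cases (1)–(3) of the list preceding the lemma. None of this is deep; it is a matter of writing the embeddings of root data carefully and invoking that a subgroup containing a maximal torus and generated by it and simple root subgroups is a standard Levi.
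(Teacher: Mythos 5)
The paper states this lemma without giving a proof, so there is nothing to compare against line by line; judged on its own, your argument is essentially the standard one and its outline is sound: decompose the natural module into $s$-eigenspaces grouped into Frobenius orbits, observe that $C_{\mathbf{L}}(s)=\mathbf{L}\cap C_{\mathbf{G}}(s)$ decomposes compatibly with the block structure of $\mathbf{L}$, identify $\mathbf{L}_{[\lambda]}(s)$ inside $\mathbf{G}_{[\lambda]}(s)$ as the Levi of the parabolic $\mathbf{P}_{[\lambda]}(s)=\mathbf{P}\cap\mathbf{G}_{[\lambda]}(s)$ by root-datum (or isotropic-flag) combinatorics, and get rationality for free because the eigenvalue-orbit decomposition, $\mathbf{P}$ and $\mathbf{L}$ are all $F$-stable.

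One step needs repair, though it is easily patched. You cannot in general conjugate the given rational $s$ by a \emph{rational} element into a fixed rational maximal torus $\mathbf{T}$ that lies in a rational Borel $\mathbf{B}\subset\mathbf{P}$; geometric conjugation is always possible, but it destroys the $F$-stability of the transported configuration, which is exactly what the last clause of the lemma is about, so ``$s\in\mathbf{T}$, with $\mathbf{T}\subset\mathbf{B}$ rational'' cannot be assumed as stated. The fix is to decouple the two halves of your argument: for the Levi-of-parabolic claim work over $\overline{\mathbb{F}}_q$ with any maximal torus of $C_{\mathbf{L}}(s)$ containing $s$ (such a torus exists since $s$ is central in $C_{\mathbf{L}}(s)$) and any Borel of $\mathbf{G}$ inside $\mathbf{P}$ containing it — equivalently, invoke the standard fact that for $s$ semisimple in the Levi $\mathbf{L}$ of $\mathbf{P}$, the intersection $\mathbf{P}\cap C_{\mathbf{G}}(s)$ is a parabolic of $C_{\mathbf{G}}(s)$ with Levi $\mathbf{L}\cap C_{\mathbf{G}}(s)$ (Digne--Michel), and then project to the $[\lambda]$-factor; for rationality, argue exactly as in your last paragraph, noting that $\mathbf{G}_{[\lambda]}(s)$, $\mathbf{L}_{[\lambda]}(s)$ and $\mathbf{P}_{[\lambda]}(s)=\mathbf{P}\cap\mathbf{G}_{[\lambda]}(s)$ are defined intrinsically from the $F$-stable data $(\mathbf{G},\mathbf{P},\mathbf{L},s)$ and the $F$-orbit $[\lambda]$, so they are $F$-stable without any choice of $\mathbf{T}$ or $\mathbf{B}$. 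With that adjustment the bookkeeping over the $\GL$-blocks (including the field extensions $\mathbb{F}_{q^{|[\lambda]|}}$ and the $\lambda=\pm1$ cases) goes through as you describe.
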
  

\subsection{Weil representation and Lusztig correspondence}

Suppose that $\mathbf{G}$ is an unitary group, and identify $\mathbf{G}^*$ with $\mathbf{G}$. Let $s$ be a rational semisimple element, $\mathbf{G}_\#(s)$ denote the product of $\mathbf{G}_{[\lambda]}(s)$ for $\lambda\neq 1$, and let $l$ be its rank. Let $G_\#(s)$ and $G_{[1]}(s)$ denote the group of rational elements of  $\mathbf{G}_\#(s)$ and $\mathbf{G}_{[1]}(s)$ respectively. If $\mathbf{G}$ is the group $\mathbf{G}_m$ of Witt index $m$, then the group $G_{[1]}(s)$ can be identified with the unitary group $G_{m-l}$. The decomposition of the centralizer of $s$ (found in the previous section) together with the Lusztig correspondence yield a one-to-one correspondence 
\begin{align}\label{BijectionPan}
\Xi^G_s : \mathscr{E}(G_m,(s))\simeq \mathscr{E}(G_\#(s),(1))\times \mathscr{E}(G_{m-l},(1)).
\end{align}
We denote by $\pi_\#$ and $\pi_{m-l}$ the (unipotent) representations of $G_\#(s)$ and $G_{m-l}$ such that $\pi$ corresponds to $\pi_\#\otimes \pi_{m-l}$ by $\Xi^G_s$. This bijection is easily extended to rational Levi subgroups $\mathbf{L}$ of $\mathbf{G}_m$.

Let $(G_m,G'_{m'})$ be an unitary dual pair. According to Proposition 2.3 in \cite{AMR}, if $s$ is a semisimple element in $G_m$, then there is a semisimple element $s'$ in $G'_{m'}$, such that the Howe correspondence sends $\mathscr{E}(G_m,(s))$ to $\mathscr{R}(G'_{m'},(s'))$. Moreover, in this case $s'=(s,1)$ if $m\leq m'$, and $s=(s',1)$ otherwise. 

\begin{prop}\label{PanGroups}
The groups $(G_m)_\#(s)$ and $(G'_{m'})_\#(s')$ are isomorphic. Hence, if $l$ denotes their common rank then $((G_m)_{[1]}(s),(G'_{m'})_{[1]}(s'))$ can be identified with the unitary dual pair $(G_{m-l},G'_{m'-l})$.
\end{prop}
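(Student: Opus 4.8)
The plan is to reduce the statement to tracking the eigenvalue multiset of the semisimple element. By the result of Aubert--Michel--Rouquier quoted above (Proposition 2.3 of \cite{AMR}), after possibly interchanging the two members of the pair we may assume $m\le m'$, so that $s'=(s,1)$. Viewing $s$ and $s'$ as diagonal elements of the diagonal maximal tori of $\mathbf{G}_m$ and $\mathbf{G}'_{m'}$, this means the eigenvalue multiset of $s'$ is obtained from that of $s$ by adjoining extra copies of the eigenvalue $1$. Since $F(1)=1$, these extra entries form their own Frobenius orbit $\{1\}$; hence for every $\lambda\ne 1$ occurring in $s$, the multiplicity $\nu_\lambda$ and the orbit $[\lambda]$ (recall $[\lambda]$ is the $F$-orbit of $\lambda$ intersected with the set of eigenvalues that occur) are the same whether computed from $s$ or from $s'$.

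Next I would match, for each $\lambda\ne 1$, the factor $\mathbf{G}_{[\lambda]}(s)$ of $C_{\mathbf{G}_m}(s)$ with the factor $\mathbf{G}'_{[\lambda]}(s')$ of $C_{\mathbf{G}'_{m'}}(s')$. By the structure recalled in Section \ref{CentralizersSection}, the reductive group $\mathbf{G}_{[\lambda]}(s)$ together with its Frobenius is determined by $([\lambda],\nu_\lambda(s))$ and by the rule — the same for every unitary ambient group — specifying when the factor is linear and when it is unitary, and over which finite extension of $\mathbb{F}_q$. For $\lambda\ne\pm 1$ all of this data agrees for $s$ and $s'$ by the first paragraph, so $\mathbf{G}_{[\lambda]}(s)\simeq\mathbf{G}'_{[\lambda]}(s')$. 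For $\lambda=-1$ one invokes item (1) of Section \ref{CentralizersSection}, which applies because \emph{both} $\mathbf{G}_m$ and $\mathbf{G}'_{m'}$ are unitary: it gives that $\mathbf{G}_{[-1]}(s)$ and $\mathbf{G}'_{[-1]}(s')$ are unitary groups, and they have the same rank $\nu_{-1}(s)=\nu_{-1}(s')$ (since $|[-1]|=1$), hence are isomorphic. Taking the product over all $\lambda\ne 1$ gives $\mathbf{G}_\#(s)\simeq\mathbf{G}'_\#(s')$, and passing to rational points gives $(G_m)_\#(s)\simeq(G'_{m'})_\#(s')$.

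For the second assertion, let $l$ be the common rank of $\mathbf{G}_\#(s)$ and $\mathbf{G}'_\#(s')$. The identification recorded in Section \ref{CentralizersSection} — which for $\mathbf{G}_m$ identifies $(G_m)_{[1]}(s)$ with the unitary group $G_{m-l}$ in the Witt tower of $G_m$, the subtracted integer being the rank of $\mathbf{G}_\#(s)$ — applies verbatim to $\mathbf{G}'_{m'}$ with this \emph{same} $l$, giving $(G'_{m'})_{[1]}(s')\simeq G'_{m'-l}$ in the Witt tower of $G'_{m'}$. Since $G_{m-l}\in\mathscr{T}$ and $G'_{m'-l}\in\mathscr{T}'$, the pair $(G_{m-l},G'_{m'-l})$ is a unitary dual pair by our standing choice of the Witt towers $\mathscr{T}$ and $\mathscr{T}'$, which is exactly the claim.

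The argument is essentially bookkeeping; the one step carrying genuine content is the case $\lambda=-1$, where one needs that taking the centralizer keeps the $(-1)$-part a \emph{unitary} group, rather than, say, an orthogonal group whose type or discriminant could a priori differ between $G_m$ and $G'_{m'}$. This is precisely where the hypothesis that the dual pair is unitary enters, and it is the reason the symplectic--orthogonal analogue is more delicate. The only other point needing care is that the integer $l$ appearing in the two identifications $(G_m)_{[1]}(s)=G_{m-l}$ and $(G'_{m'})_{[1]}(s')=G'_{m'-l}$ really is the same — which is exactly what the first assertion provides.
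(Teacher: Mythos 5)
Your proposal is correct and follows the same route as the paper's (much terser) proof: both rest on the relation $s'=(s,1)$ (resp.\ $s=(s',1)$) from Proposition 2.3 of \cite{AMR}, so that the eigenvalues different from $1$ of $s$ and $s'$ agree with multiplicities, and then on the explicit description of centralizers in Section \ref{CentralizersSection} to match the factors $\mathbf{G}_{[\lambda]}$ for $\lambda\neq 1$ and to identify the $[1]$-parts with $G_{m-l}$ and $G'_{m'-l}$. Your write-up merely makes explicit the case analysis (including $\lambda=-1$ via item (1)) that the paper leaves implicit.
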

\begin{proof}
The isomorphism between  $(G_m)_\#(s)$ and $(G'_{m'})_\#(s')$ comes from the explicit decomposition of centralizers given in the Section \ref{CentralizersSection}, and the fact that $s$ and $s'$ have the same eigenvalues different from 1 (with same multiplicities), this comes from the last sentence in the previous paragraph. The second assertion follows from the discussion above.
\end{proof}
Let $s_m$ and $s_{m'}$ be semisimple elements of $G_m$ and $G'_{m'}$ whose Lusztig series are related by the Howe correspondence. From previous remarks we deduce that there is some $l\leq\min(m,m')$, and some $s\in \mathbf{T}_l$ with eigenvalues different from $1$, such that $s_m=(s,1)$, and $s_{m'}=(s,1)$. Let $\omega^\flat_{m,m',s}$ denote the projection of the Weil representation $\omega^\flat_{m,m'}$ onto $\mathscr{R}(G_m,(s_m))\otimes\mathscr{R}(G'_{m'},(s_{m'}))$, and $\mathbf{T}_{l,0}$ the subset of $\mathbf{T}_l$ whose elements have all their eigenvalues different from $1$. Proposition 2.4 in \cite{AMR} asserts that
$$
\omega^\flat_{m,m'}=\bigoplus_{l=0}^{\min(m,m')}\bigoplus_{s\in\mathbf{T}_{l,0}}\omega^\flat_{m,m',s}.
$$

The Weil representation $\omega^\flat_{m,m',s}$ can be described in terms of a correspondence between unipotent characters, defined either by $\mathbf{R}_{G_\#(s),1}$, or by the unipotent projection of the Weil representation of the smaller unitary dual pair $(G_{m-l},G'_{m'-l})$.

\begin{thm}\label{ReductionUnipotentCase}\cite[Th\'eor\`eme 2.6]{AMR}
Let $s$ belong to $\mathbf{T}_{l,0}$. For a linear or unitary pair $(G_m,G'_{m'})$, the representation $\omega^\flat_{m,m',s}$ is the image by the Lusztig correspondance
$$
\mathscr{E}(G_m\times G'_{m'},(s_m)\times(s_{m'})) \simeq \mathscr{E}(C_{G_m}(s_m)\times C_{G_{m'}}(s_{m'}),1),
$$
of the representation
$$
\mathbf{R}_{G_\#(s),1}\otimes \omega^\flat_{m-l,m'-l,1}.
$$
\end{thm}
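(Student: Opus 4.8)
This is Th\'eor\`eme~2.6 of \cite{AMR}, so the task is to reproduce that argument. Since $\mathfrak{L}^{G_m}_{s_m}\times\mathfrak{L}^{G'_{m'}}_{s_{m'}}$ is an isometry from $\mathscr{R}(G_m\times G'_{m'},(s_m)\times(s_{m'}))$ onto $\mathscr{R}(C_{G_m}(s_m)\times C_{G'_{m'}}(s_{m'}),1)$, and since $C_{G_m}(s_m)=G_\#(s)\times G_{m-l}$ and $C_{G'_{m'}}(s_{m'})=G_\#(s)\times G'_{m'-l}$ by Proposition~\ref{PanGroups}, the statement reduces to a multiplicity identity. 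Writing $\rho_\#\otimes\rho_{m-l}$ and $\rho'_\#\otimes\rho'_{m'-l}$ for the images under $\mathfrak{L}$ of $\pi\in\mathscr{E}(G_m,(s_m))$ and $\pi'\in\mathscr{E}(G'_{m'},(s_{m'}))$, and using $\langle\mathbf{R}_{G_\#(s),1},\rho_\#\otimes\rho'_\#\rangle=\langle\widehat{\rho_\#},\rho'_\#\rangle$, one must show
\[
\langle\omega^\flat_{m,m'},\pi\otimes\pi'\rangle=\langle\widehat{\rho_\#},\rho'_\#\rangle\cdot\langle\omega^\flat_{m-l,m'-l},\rho_{m-l}\otimes\rho'_{m'-l}\rangle
\]
for all such $\pi,\pi'$. (The decomposition $\omega^\flat_{m,m'}=\bigoplus_{l,s}\omega^\flat_{m,m',s}$ of \cite[Prop.~2.4]{AMR} is what makes $\omega^\flat_{m,m',s}$ meaningful and forces $s_m=(s,1)$, $s_{m'}=(s,1)$ in the first place.)

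The left-hand multiplicity is computed from the explicit character of the Weil representation (Howe, G\'erardin). Two facts drive the computation: that $\omega^\flat_{m,m'}$ is a \emph{uniform} function on $G_m\times G'_{m'}$, so that $\langle\omega^\flat_{m,m'},\pi\otimes\pi'\rangle$ depends only on the Deligne--Lusztig expansions of $\pi$ and $\pi'$; and that the Lusztig correspondence, restricted to a series, is compatible with Deligne--Lusztig induction, matching $R_{\mathbf{T}}^{\mathbf{G}_m}(\theta)$ (for $\theta$ in the geometric class of $s_m$) with $\pm R_{\mathbf{T}^*}^{C_{\mathbf{G}_m}(s_m)}(\theta_{\mathrm{u}})$, and similarly for $G'_{m'}$. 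Plugging these expansions in, one uses the geometric splitting $W_m=W^{(1)}\operp W^{(\neq 1)}$ of the hermitian space into the generalized $1$-eigenspace of $s_m$ and the sum of the others (so that $\U(W^{(1)})\cong G_{m-l}$ and $G_\#(s)=C_{\U(W^{(\neq 1)})}(s_m)$), together with the analogous splitting $W'_{m'}=W'^{(1)}\operp W'^{(\neq 1)}$ for $s_{m'}$, where $\dim_D W^{(\neq 1)}=\dim_D W'^{(\neq 1)}=l$ because $s_m,s_{m'}$ share their non-trivial eigenvalues with multiplicities. Then $W_m\otimes W'_{m'}$ is an orthogonal sum of the four symplectic spaces $W^{(a)}\otimes W'^{(b)}$, the Weil representation is the outer tensor of the corresponding Weil representations (mixed Schr\"odinger model, Lemma~\ref{mix-meta}), and on a pair of tori the Weil character factors over the four blocks since the fixed-point dimension is additive over them. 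The $(1,1)$-block reproduces $\omega^\flat_{m-l,m'-l}$; in the three remaining blocks an eigenvalue $\lambda\neq 1$ of $s_m$ can only be cancelled against its conjugate-inverse in $s_{m'}$, so after transport through $\mathfrak{L}$ those blocks contribute, orbit by orbit over the Frobenius orbits $[\lambda]$ with $\lambda\neq 1$ where $\mathbf{G}_{[\lambda]}(s)$ is a general linear or unitary group $H$ sitting essentially diagonally, the regular representation $\mathbf{R}_H=\bigoplus_\tau\tau\otimes\widehat\tau$; the product over $[\lambda]$ is $\mathbf{R}_{G_\#(s),1}$, which yields the claimed identity.

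The hard part is the bookkeeping around these blocks, which is why \cite{AMR} isolate several preliminary results. One must check that the restricted hermitian forms make each $W^{(a)}\otimes W'^{(b)}$ a genuine symplectic $\mathbb{F}_q$-space carrying the expected pair of commuting classical groups; that the determinant twist $\det(\cdot)^{(q-1)/2}$ in G\'erardin's model (Lemma~\ref{mix-meta}) is absorbed into the labelling of the Lusztig series rather than moving it; and, most delicately, that all the $\epsilon_{\mathbf{G}}$-signs attached to the Deligne--Lusztig data, governed by Proposition~\ref{RationalLeviFqRank} and Lemma~\ref{LeviCentralizer}, cancel, so that $\omega^\flat_{m,m',s}$ is the image of $\mathbf{R}_{G_\#(s),1}\otimes\omega^\flat_{m-l,m'-l,1}$ exactly and not merely up to a sign. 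For $\GL$ and $\U$ this is precisely what the explicit Weil character together with the defining properties of $\mathfrak{L}$ deliver, and the whole verification may be organized as an induction on $l$, peeling off one orbit $[\lambda]$ at a time via Propositions~2.3 and~2.4 of \cite{AMR}; the analogous statement for symplectic--orthogonal pairs would follow from the same scheme once the centralizer decomposition and sign analysis of \cite{Pan2} are in place.
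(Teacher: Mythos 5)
The paper gives no proof of this statement at all: it is imported verbatim from \cite[Th\'eor\`eme 2.6]{AMR}, so the only ``approach'' to match is the citation itself, which your opening sentence already supplies, and in that sense you are doing essentially what the paper does. Your outline of the AMR argument (uniformity of the Weil character for linear/unitary pairs, compatibility of the Lusztig correspondence with Deligne--Lusztig induction, the eigenvalue splitting of $W_m$ and $W'_{m'}$ and the resulting four tensor blocks, and the $\epsilon$-sign bookkeeping in the spirit of Proposition \ref{RationalLeviFqRank} and Lemma \ref{LeviCentralizer}) is consistent with how that proof goes, but be aware it remains a sketch: the two decisive claims --- that the $(1,1)$ block reproduces $\omega^\flat_{m-l,m'-l,1}$ and that the remaining blocks assemble into $\mathbf{R}_{G_\#(s),1}$ --- and the cancellation of signs are asserted or explicitly deferred rather than verified, which is exactly the content one must take from \cite{AMR} rather than from this paper.
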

We express this theorem using a commutative diagram (compare with Theorem 3.10 in \cite{Pan}).

\begin{cor}\label{HoweLusztig}
 The following diagram is commutative :
\begin{center} 
 \begin{tikzcd}
 \mathscr{E}(G_m,(s_m)) \arrow[d, "\Theta^\flat_{m,m'}"] \arrow[r, "\Xi^{G_m}_{s_m}", "\sim" swap] 
                                                &  \mathscr{E}(G_\#(s),(1))\times \mathscr{E}(G_{m-l},(1)) \arrow[d, "\iota\hspace{0.05cm}\otimes\hspace{0.05cm}\Theta^\flat_{m-l,m'-l}"] \\
\mathscr{R}(G'_{m'},(s_{m'})) \arrow[r,  "\Xi^{G'_{m'}}_{s_{m'}}", "\sim" swap] 
 												   & \mathscr{R}(G_\#(s),(1))\otimes \mathscr{R}(G'_{m'-l},(1)).
 \end{tikzcd}
\end{center}.
\end{cor}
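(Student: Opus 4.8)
The goal is to prove Corollary~\ref{HoweLusztig}, the commutativity of the square relating the Howe correspondence $\Theta^\flat_{m,m'}$ to the Lusztig-type bijections $\Xi^{G_m}_{s_m}$ and $\Xi^{G'_{m'}}_{s_{m'}}$. The plan is to deduce this essentially formally from Theorem~\ref{ReductionUnipotentCase} together with the decomposition $\omega^\flat_{m,m'}=\bigoplus_l\bigoplus_{s}\omega^\flat_{m,m',s}$ recalled just before it. The key observation is that a commuting square of bijections between sets of irreducible representations is equivalent to an identity between the two ``pullbacks'' of the natural pairing (or, what is the same, between the corresponding permutation-type representations of the product group), and Theorem~\ref{ReductionUnipotentCase} is precisely such an identity at the level of the Weil representation $\omega^\flat_{m,m',s}$.

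First I would make the reduction to a single $s$. The bijection $\Xi^{G_m}_{s_m}$ is by construction the restriction of $\mathfrak{L}^{G_m}_{s_m}$ to $\mathscr{E}(G_m,(s_m))$ composed with the decomposition of the centralizer, and Proposition~2.4 of \cite{AMR} tells us that $\Theta^\flat_{m,m'}$ respects the orthogonal decomposition by $(l,s)$: it maps $\mathscr{R}(G_m,(s_m))$ into $\mathscr{R}(G'_{m'},(s_{m'}))$ with $s_m=(s,1)$, $s_{m'}=(s,1)$. So it suffices to check commutativity on each block $\mathscr{E}(G_m,(s_m))$ separately, with $s\in\mathbf{T}_{l,0}$ fixed and $l\le\min(m,m')$.

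Next I would translate the square into the language of Weil representations. For $\pi\in\mathscr{E}(G_m,(s_m))$, the multiplicity of $\pi'\in\mathscr{E}(G'_{m'},(s_{m'}))$ in $\Theta^\flat_{m,m'}(\pi)$ is $\langle\omega^\flat_{m,m',s},\pi\otimes\pi'\rangle$; under the bijection $\Xi^{G_m}_{s_m}\otimes\Xi^{G'_{m'}}_{s_{m'}}$ this pairing is transported to $\langle \mathbf{R}_{G_\#(s),1}\otimes\omega^\flat_{m-l,m'-l,1},\ (\pi_\#\otimes\pi_{m-l})\otimes(\pi'_\#\otimes\pi'_{m-l})\rangle$ by Theorem~\ref{ReductionUnipotentCase}. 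Since $\mathbf{R}_{G_\#(s)}=\sum_{\eta}\eta\otimes\hat\eta$, this equals $\langle\pi_\#,\hat{\pi'_\#}\rangle\cdot\langle\omega^\flat_{m-l,m'-l,1},\pi_{m-l}\otimes\pi'_{m-l}\rangle$, i.e.\ it is non-zero precisely when $\pi'_\#=\hat{\pi_\#}$ (using that $\mathbf{G}_\#(s)\simeq\mathbf{G}'_\#(s')$ by Proposition~\ref{PanGroups}, so $\pi_\#$ and $\pi'_\#$ live in the same group and $\iota$ makes sense) and $\pi'_{m-l}\in\Theta^\flat_{m-l,m'-l}(\pi_{m-l})$. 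But that is exactly the statement that $\Theta^\flat_{m,m'}(\pi)$, read through $\Xi^{G'_{m'}}_{s_{m'}}$, is $\iota(\pi_\#)\otimes\Theta^\flat_{m-l,m'-l}(\pi_{m-l})$, which is the commutativity asserted by the diagram. (One should note that $\Theta^\flat$ here is to be read as a virtual-representation / full-occurrence map, so the identity of multiplicities for all $\pi'$ is genuinely equivalent to the identity of the images; the $\iota$ on $\mathscr{E}(G_\#(s),(1))$ is exactly the contragredient coming from the $\sum\eta\otimes\hat\eta$ shape of $\mathbf{R}_{G_\#(s)}$.)

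The main obstacle is bookkeeping rather than mathematics: one must check that the three ``$\Xi$'' identifications appearing in Theorem~\ref{ReductionUnipotentCase} and in the statement of the corollary are literally the same maps (the same normalization of the Lusztig correspondence, the same ordering of the factors $\mathbf{G}_{[\lambda]}(s)$ in the centralizer, and the correct matching $s_m\leftrightarrow s_{m'}$ via Proposition~\ref{PanGroups}), and that the contragredient $\iota$ on $\mathscr{E}(G_\#(s),(1))$ is correctly placed — in particular that it acts on the $\mathbf{G}_\#(s)$-factor only and not on the $G_{m-l}$-factor. Once these compatibilities are pinned down, the proof is the two-line computation above. I would therefore write the proof as: reduce to fixed $(l,s)$ using the decomposition of $\omega^\flat_{m,m'}$; apply Theorem~\ref{ReductionUnipotentCase} and Proposition~\ref{PanGroups}; expand $\mathbf{R}_{G_\#(s),1}=\bigoplus_\eta \eta\otimes\hat\eta$; and read off that the transported correspondence is $\iota\otimes\Theta^\flat_{m-l,m'-l}$, which is the commutativity of the square.
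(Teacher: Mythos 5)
Your proposal is correct and follows essentially the same route as the paper: both arguments take $\pi\otimes\pi'$ occurring in $\omega^\flat_{m,m',s}$, transport it through Theorem \ref{ReductionUnipotentCase} to $\mathbf{R}_{G_\#(s),1}\otimes\omega^\flat_{m-l,m'-l,1}$, and use the decomposition $\mathbf{R}_{G_\#(s)}=\sum_\eta \eta\otimes\hat\eta$ to read off $\pi'_\#=\hat{\pi}_\#$ and $\pi'_{m'-l}\in\Theta^\flat_{m-l,m'-l}(\pi_{m-l})$. Your explicit multiplicity bookkeeping and the preliminary reduction to a fixed $(l,s)$ block are just slightly more detailed versions of what the paper leaves implicit.
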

\begin{proof}
 Let $\pi$ be in irreducible representation in the Lusztig series of $s_m$ and $\pi'$ an irreducible subrepresentation of $\Theta^\flat_{m,m'}(\pi)$. We must stablish that $\pi'_\#=\hat{\pi}_\#$ and that $\pi'_{m'-l}$ is an irreducible subrepresentation of $\Theta^\flat_{m-l,m'-l}(\pi_{m-l})$.
 
 According to Theorem \ref{ReductionUnipotentCase}, $\pi_\#\otimes\pi'_\#$ is an irreducible component of the representation $\mathbf{R}_{G_\#(s),1}$ of $G_\#(s)\times G_\#(s)$. This implies that $\pi'_\#=\hat{\pi}_\#$ (see Section \ref{PreliminariesRepresentationTheory}). The same theorem tells us that $\pi_{m-l}\otimes\pi'_{m'-l}$ is an irreducible constituent of the (unipotent part) of the Weil representation $\omega^\flat_{m-l,m'-l}$ associated to the dual pair $(G_{m-l},G'_{m'-l})$. Hence, $\pi'_{m'-l}$ is an irreducible subrepresentation of $\Theta^\flat_{m-l,m'-l}(\pi_{m-l})$.
\end{proof}

\subsection{Unipotent Harish-Chandra series}
Corollary \ref{HoweLusztig} shows how the study of the Howe correspondence can be brought to the study of a correspondence between unipotent representations of a smaller dual pair of the same kind. In this section we will use this result to show how to describe the Howe correspondence between Harish-Chandra series in terms of unipotent series. 
 
 Let $\mathbf{L}$ be a rational Levi contained in a rational parabolic of a unitary group $\mathbf{G}$, and $L$ be its group of rational points, let $s$ be a semisimple element of $L$.  In view of Lemma \ref{LeviCentralizer}, it makes sense to consider inductions $R_{C_L(s)}^{C_G(s)}$, $R_{L_\#(s)}^{G_\#(s)}$ and $R_{L_{m-l}(s)}^{G_{m-l}(s)}$. The following result (similar to Proposition 8.25 in \cite{CE}) shows the effect of the Lusztig bijection on the parabolic induction from $L$ to $G$.

\begin{prop}\label{LusztigHarish-Chandra}
 Parabolic induction $R_L^G$ sends the series $\mathscr{E}(L,(s))$ to $\mathscr{R}(G,(s))$. Furthermore, the following diagram is commutative:
\begin{center}
\begin{tikzcd}
 \mathscr{E}(L,(s)) \arrow[d, "R_L^{G}"] \arrow[r, "\mathfrak{L}^L_s" swap, "\sim"]
 								&  \mathscr{E}(C_L(s),(1)) \arrow[d, "R_{C_L(s)}^{C_G(s)}"]  \\
 \mathscr{R}(G,(s)) \arrow[r, "\sim", "\mathfrak{L}^{G}_s" swap]
 								& \mathscr{R}(C_{G}(s),(1)).
 \end{tikzcd}
 \end{center}
\end{prop}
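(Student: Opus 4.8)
The plan is to reduce everything to the corresponding statement of the Lusztig correspondence for \emph{tori} together with Lusztig's characterization of the bijection $\mathfrak{L}_s$ via its compatibility with twisted induction. First I would recall that the Lusztig correspondence $\mathfrak{L}^G_s$ is defined so as to intertwine Deligne--Lusztig induction $R_{\mathbf{T}}^{\mathbf{G}}$ on the $s$-side with $R_{\mathbf{T}_1}^{C_{\mathbf{G}^*}(s)}$ on the unipotent side, where $\mathbf{T}\leftrightarrow\mathbf{T}_1$ are dual tori and $s\in\mathbf{T}^*$; more precisely, for a rational maximal torus $\mathbf{T}$ of $\mathbf{G}$ with $s$ lying in (the dual of) $\mathbf{T}$, one has a sign-coherent identification $R_{\mathbf{T}}^{\mathbf{G}}(s) \leftrightarrow R_{\mathbf{T}^*}^{C_{\mathbf{G}^*}(s)}(1)$ under $\mathfrak{L}^G_s$. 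The analogous statement holds for $\mathbf{L}$ and $C_{\mathbf{L}^*}(s)=C_L(s)$, using Lemma~\ref{LeviCentralizer} to ensure $C_{\mathbf{L}^*}(s)$ really is a rational Levi of $C_{\mathbf{G}^*}(s)$ sitting inside a rational parabolic.

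Next I would use transitivity of Deligne--Lusztig induction, $R_{\mathbf{L}}^{\mathbf{G}}\circ R_{\mathbf{T}}^{\mathbf{L}} = R_{\mathbf{T}}^{\mathbf{G}}$, and likewise $R_{C_L(s)}^{C_G(s)}\circ R_{\mathbf{T}^*}^{C_L(s)} = R_{\mathbf{T}^*}^{C_G(s)}$, together with the fact that for a rational Levi inside a rational parabolic, $R_{\mathbf{L}}^{\mathbf{G}}$ is honest (Harish-Chandra) parabolic induction --- here Proposition~\ref{RationalLeviFqRank} is exactly what guarantees sign-coherence, since $\epsilon_{\mathbf{L}}=\epsilon_{\mathbf{G}}$ and $\epsilon_{C_L(s)}=\epsilon_{C_G(s)}$ means no stray signs intervene. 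Concretely: take $\rho\in\mathscr{E}(L,(s))$; it is a constituent of some $R_{\mathbf{T}}^{\mathbf{L}}(s)$, so $\mathfrak{L}^L_s(\rho)$ is a constituent of $\pm R_{\mathbf{T}^*}^{C_L(s)}(1)$. Applying $R_{C_L(s)}^{C_G(s)}$ and using transitivity, $R_{C_L(s)}^{C_G(s)}\mathfrak{L}^L_s(\rho)$ is supported on constituents of $\pm R_{\mathbf{T}^*}^{C_G(s)}(1)$, which by the characterization of $\mathfrak{L}^G_s$ is $\mathfrak{L}^G_s$ applied to the part of $\pm R_{\mathbf{T}}^{\mathbf{G}}(s) = \pm R_{\mathbf{L}}^{\mathbf{G}}R_{\mathbf{T}}^{\mathbf{L}}(s)$ lying in $\mathscr{E}(G,(s))$, i.e. $\mathfrak{L}^G_s(R_L^G\rho)$. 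Since $R_{\mathbf{L}}^{\mathbf{G}}$ preserves the Lusztig series $\mathscr{E}(\cdot,(s))$ (the geometric conjugacy class of $s$ is unchanged by twisted induction), this also proves the first assertion that $R_L^G$ sends $\mathscr{E}(L,(s))$ into $\mathscr{R}(G,(s))$.

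The main obstacle is the bookkeeping around uniform versus non-uniform functions and multiplicities: the argument above controls images only up to the span of Deligne--Lusztig characters, and one must know that $\mathfrak{L}_s$ is determined by (and compatible with) twisted induction strongly enough to pin down the commutation on the nose, not merely on uniform projections. For general groups this is delicate, but for unitary groups --- the only case we need --- every irreducible character is uniform (this is classical for $\GL$ and $\U$), so there is no non-uniform part and the identification of $R_{C_L(s)}^{C_G(s)}\circ\mathfrak{L}^L_s$ with $\mathfrak{L}^G_s\circ R_L^G$ as maps of virtual characters forces the diagram to commute. The even-orthogonal subtlety addressed by Aubert--Michel--Rouquier in extending $\mathfrak{L}_s$ (Proposition 1.7 of \cite{AMR}) does not arise here. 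I would close by remarking that this is formally the finite-field analogue of Proposition~8.25 in \cite{CE}, to which one may alternatively appeal, the only additional input being Lemma~\ref{LeviCentralizer} to see that $C_L(s)\subset C_G(s)$ is again a Levi in a parabolic.
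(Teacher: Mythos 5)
Your proposal is correct and follows essentially the same route as the paper: express each $\pi\in\mathscr{E}(L,(s))$ as a (virtual) combination of Deligne--Lusztig characters using uniformity for products of linear and unitary groups, apply transitivity of twisted induction and the defining compatibility of $\mathfrak{L}_s$ with $R_{\mathbf{T}}$, and match the signs via Lemma~\ref{LeviCentralizer} together with Proposition~\ref{RationalLeviFqRank}. The uniformity point you flag as the "main obstacle" is precisely the paper's opening step, so nothing is missing.
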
  
\begin{proof}
Let $\pi$ belong to $\mathscr{E}(L,(s))$. Since $\mathbf{L}$ is a product of linear and unitary groups, central functions in $L$ are uniform. Therefore we can express $\pi$ as a linear combination $\pi=\sum_{s\in T} n_T R_T^L(s)$ of Deligne-Lusztig characters with integral coefficients. Transitivity of Lusztig induction implies that $R_L^G(\pi) = \sum_{s\in T}  n_T R_T^G(s)$, this representation belongs to $\mathscr{R}(G,(s))$.

Applying the Lusztig bijection we obtain 
$$
\mathfrak{L}^G_s \circ R_L^G(\pi)= \epsilon_\mathbf{G}\epsilon_{C_\mathbf{G}(s)}\sum_{s\in T} n_ T R_T^{C_G(s)}(s),
$$
this representation belongs to $\mathscr{R}(C_G(s),(1))$. On the other side, inducing the representation $\mathfrak{L}^L_s(\pi)= \epsilon_\mathbf{L}\epsilon_{C_\mathbf{L}(s)}\sum_{s\in T} n_T R_T^{C_L(s)}(s)$ to $C_G(s)$, we obtain
$$
R_{C_L(s)}^{C_G(s)} \circ \mathfrak{L}^L_s(\pi)= \epsilon_\mathbf{L}\epsilon_{C_{\mathbf{L}}(s)}\sum_{s\in T} n_T R_T^{C_G(s)}(s).
$$
By Lemma \ref{LeviCentralizer} the group $C_\mathbf{L}(s)$ is a rational Levi contained in a rational parabolic subgroup of $C_\mathbf{G}(s)$. Proposition \ref{RationalLeviFqRank} implies that $\epsilon_\mathbf{L}$ and $\epsilon_{C_\mathbf{L}(s)}$ are equal to $\epsilon_\mathbf{G}$ and $\epsilon_{C_\mathbf{G}(s)}$ respectively, whence the result. 
\end{proof}

\begin{cor}
The following diagram is commutative :
\begin{center}
 \begin{tikzcd}
 \mathscr{E}(L,(s)) \arrow[d, "R_L^{G_m}"] \arrow[r, "\Xi^{L}_s", "\sim" swap]
 								& \mathscr{E}(L_\#(s),(1))\times \mathscr{E}(L_{m-l},(1)) \arrow[d, "R_{L_\#(s)}^{G_\#(s)}\times R_{L_{m-l}}^{G_{m-l}}"] \\
 \mathscr{R}(G_m,(s)) \arrow[r, "\sim" swap, "\Xi^{G_m}_s"]  							
                          	& \mathscr{R}(G_\#(s),(1))\otimes \mathscr{R}(G_{m-l},(1)).
 \end{tikzcd}
\end{center}
\end{cor}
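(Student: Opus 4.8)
The plan is to deduce this corollary from Proposition \ref{LusztigHarish-Chandra} by factoring the bijection $\Xi$ through the Lusztig bijection $\mathfrak{L}$. By its very definition, $\Xi^{G_m}_s$ is the composite of $\mathfrak{L}^{G_m}_s : \mathscr{E}(G_m,(s))\simeq\mathscr{E}(C_{G_m}(s),(1))$ with the identification $\mathscr{E}(C_{G_m}(s),(1))\simeq\mathscr{E}(G_\#(s),(1))\times\mathscr{E}(G_{m-l},(1))$ coming from the direct product decomposition $C_{\mathbf{G}_m}(s)=\mathbf{G}_\#(s)\times\mathbf{G}_{[1]}(s)$ of Section \ref{CentralizersSection}, and similarly $\Xi^L_s$ is $\mathfrak{L}^L_s$ followed by the analogous identification for $C_{\mathbf{L}}(s)$. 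So first I would observe that the square in the statement splits as the horizontal concatenation of two squares: the left-hand one is exactly the square of Proposition \ref{LusztigHarish-Chandra}, hence already known to commute, and it remains only to check that the right-hand square commutes. That right-hand square has the two centralizer-decomposition identifications as its horizontal arrows, $R_{C_L(s)}^{C_G(s)}$ as its left vertical arrow, and $R_{L_\#(s)}^{G_\#(s)}\times R_{L_{m-l}}^{G_{m-l}}$ as its right vertical arrow.

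To handle the right-hand square I would use Lemma \ref{LeviCentralizer}, applied factor by factor: it says that the inclusion $C_{\mathbf{L}}(s)\subset C_{\mathbf{G}_m}(s)$ is, componentwise in $C_{\mathbf{G}_m}(s)=\prod_{[\lambda]}\mathbf{G}_{[\lambda]}(s)$, the inclusion of the rational Levi $\mathbf{L}_{[\lambda]}(s)$ into the rational parabolic $\mathbf{P}_{[\lambda]}(s)$ of $\mathbf{G}_{[\lambda]}(s)$. Grouping the factors with $\lambda\neq 1$ into $\mathbf{G}_\#(s)$ (resp. $\mathbf{L}_\#(s)$) and keeping the $\lambda=1$ part as $\mathbf{G}_{m-l}$ (resp. $\mathbf{L}_{m-l}$), this exhibits the pair $(C_{\mathbf{L}}(s),C_{\mathbf{G}_m}(s))$, together with its distinguished parabolic, as the direct product of the pairs $(\mathbf{L}_\#(s),\mathbf{G}_\#(s))$ and $(\mathbf{L}_{m-l},\mathbf{G}_{m-l})$ with their respective parabolics. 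Then the commutativity I need is just the standard fact that parabolic (Harish-Chandra) induction in a direct product of groups is the external product of the parabolic inductions in the factors, i.e.\ $R_{M_1\times M_2}^{H_1\times H_2}(\rho_1\otimes\rho_2)=R_{M_1}^{H_1}(\rho_1)\otimes R_{M_2}^{H_2}(\rho_2)$; applying it with $\mathbf{H}_1=\mathbf{G}_\#(s)$ and $\mathbf{H}_2=\mathbf{G}_{m-l}$ makes $R_{C_L(s)}^{C_G(s)}$ correspond to $R_{L_\#(s)}^{G_\#(s)}\times R_{L_{m-l}}^{G_{m-l}}$ under the identifications. Concatenating the two commutative squares then yields the corollary.

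The one place I expect to need care is the bookkeeping in the second step: matching the parabolic subgroup of $\mathbf{G}_m$ underlying $R_L^{G_m}$ with the product of the parabolics $\mathbf{P}_{[\lambda]}(s)$ in a way that is compatible with the Levi $\mathbf{L}$ and survives the centralizer decomposition of both $\mathbf{L}$ and $\mathbf{G}_m$ (including the $\GL$ factors of $\mathbf{L}$). This is precisely what Lemma \ref{LeviCentralizer} is designed to supply, so I do not anticipate any genuinely new difficulty — everything beyond it is the same formal manipulation of Lusztig induction already carried out in the proof of Proposition \ref{LusztigHarish-Chandra}.
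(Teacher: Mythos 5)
Your proposal is correct and is exactly the argument the paper intends: the paper states this corollary without proof as an immediate consequence of Proposition \ref{LusztigHarish-Chandra}, since by definition $\Xi^{G_m}_s$ (resp. $\Xi^L_s$) is $\mathfrak{L}^{G_m}_s$ (resp. $\mathfrak{L}^L_s$) followed by the centralizer decomposition of Section \ref{CentralizersSection}, so the square factors into the square of that proposition and the square expressing that parabolic induction for the product $C_{G_m}(s)\simeq G_\#(s)\times G_{m-l}$ is the external product of the inductions on the factors, with Lemma \ref{LeviCentralizer} guaranteeing the compatibility of the parabolics. Your decomposition into two squares and your use of Lemma \ref{LeviCentralizer} coincide with this intended route, so no further comparison is needed.
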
 

Let $(G_m,G'_{m'})$ be a unitary dual pair. Theorem \ref{HoweCuspidalSupport} asserts that, for a cuspidal pair $(\mathbf{L},\rho)$ of $\mathbf{G}_m$, we can find a unique cuspidal pair $(\mathbf{L}',\rho')$ of $\mathbf{G}'_{m'}$ such that $\Theta^\flat_{m,m'}$ sends the series $\Irr(G_m,\rho)$ to $\mathscr{R}(G'_{m'},\rho')$.

\begin{prop}
Let $\omega^\flat_{m,m',\rho}$ denote the projection of the Weil representation $\omega^\flat_{m,m'}$ onto $\mathscr{R}(G_m,\rho)\otimes\mathscr{R}(G'_{m'},\rho')$. Then 
$$
\omega^\flat_{m,m'}=\bigoplus_{(\mathbf{L},\rho)}\omega^\flat_{m,m',\rho},
$$
where the sum runs over all rational conjugacy classes of cuspidal pairs of $G_m$.
\end{prop}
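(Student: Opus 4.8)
The statement to prove is the decomposition
$$
\omega^\flat_{m,m'}=\bigoplus_{(\mathbf{L},\rho)}\omega^\flat_{m,m',\rho},
$$
where the sum runs over rational conjugacy classes of cuspidal pairs of $G_m$. The plan is to deduce this directly from the fact that the Harish-Chandra series $\Irr(G_m,\rho)$, as $(\mathbf{L},\rho)$ ranges over rational conjugacy classes of cuspidal pairs of $\mathbf{G}_m$, form a partition of $\Irr(G_m)$, together with the analogous statement for $G'_{m'}$ and the compatibility of $\Theta^\flat_{m,m'}$ with Harish-Chandra series established in Theorem \ref{HoweHarish-Chandra} (equivalently Theorem \ref{HoweCuspidalSupport}).

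First I would recall that $\mathscr{R}(G_m\times G'_{m'})$ decomposes as the orthogonal direct sum of the subspaces $\mathscr{R}(G_m,\rho)\otimes\mathscr{R}(G'_{m'},\tau)$ over all pairs of rational conjugacy classes of cuspidal pairs, one for $G_m$ and one for $G'_{m'}$; this is just the tensor product of the two Harish-Chandra partitions. Projecting $\omega^\flat_{m,m'}$ onto each summand and summing gives $\omega^\flat_{m,m'}=\bigoplus_{\rho,\tau}\pi_{\rho,\tau}(\omega^\flat_{m,m'})$, where $\pi_{\rho,\tau}$ denotes the projection. The content of the theorem is then exactly that $\pi_{\rho,\tau}(\omega^\flat_{m,m'})=0$ unless $\tau=\rho'$, i.e. unless $\tau$ is the cuspidal pair assigned to $\rho$ by Theorem \ref{HoweCuspidalSupport}; this is what collapses the double sum to the single sum over $(\mathbf{L},\rho)$ and identifies $\pi_{\rho,\rho'}(\omega^\flat_{m,m'})$ with $\omega^\flat_{m,m',\rho}$.

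To establish that vanishing, I would argue as follows: an irreducible constituent of $\omega^\flat_{m,m'}$ has the form $\pi\otimes\pi'$ with $\pi'\in\Theta^\flat_{m,m'}(\pi)$ (up to contragredient conventions, matching the formulation in Theorem \ref{HoweCuspidalSupport}). If $\pi\in\Irr(G_m,\rho)$, then by Theorem \ref{HoweCuspidalSupport} the cuspidal support of $\pi'$ is determined by that of $\pi$: its cuspidal part on the $G'$-factor is $\theta^\flat(\varphi)$, and its $\GL$-part differs from that of $\pi$ only by adding or removing trivial characters. In all cases the cuspidal pair of $\pi'$ is $\rho'$, independently of the choice of $\pi$ in the series and of $\pi'$ in $\Theta^\flat_{m,m'}(\pi)$. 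Hence every constituent of $\pi_{\rho,\tau}(\omega^\flat_{m,m'})$ with $\tau\neq\rho'$ must vanish, and $\pi_{\rho,\rho'}(\omega^\flat_{m,m'})$ by definition is $\omega^\flat_{m,m',\rho}$. Summing over the rational conjugacy classes of cuspidal pairs $(\mathbf{L},\rho)$ of $\mathbf{G}_m$ — which, since $\Theta^\flat$ lands in zero outside the stated range, accounts for all constituents — yields the claimed decomposition. The only mild subtlety, which I would address in a sentence, is that the partition of $\Irr(G_m)$ into Harish-Chandra series is indexed by $\mathbf{G}_m$-conjugacy classes (equivalently rational conjugacy classes) of cuspidal pairs, so the indexing set in the statement is the correct one; this is standard Harish-Chandra theory for finite reductive groups and requires no new input here. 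I do not anticipate a genuine obstacle: the result is essentially a bookkeeping consequence of Theorem \ref{HoweCuspidalSupport} and the orthogonality of distinct Harish-Chandra series.
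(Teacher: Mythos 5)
Your argument is correct and matches the paper's treatment: the paper states this proposition without a separate proof, presenting it as the immediate bookkeeping consequence of Theorem \ref{HoweCuspidalSupport}/Theorem \ref{HoweHarish-Chandra} (each series $\Irr(G_m,\rho)$ is sent into a single series $\mathscr{R}(G'_{m'},\rho')$) combined with the partition of $\Irr(G_m\times G'_{m'})$ into Harish-Chandra blocks, which is exactly the vanishing-of-off-diagonal-blocks argument you spell out. No gap.
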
 


Let the cuspidal pairs $(\mathbf{L},\rho)$ and $(\mathbf{L}',\rho')$ be as above, and let $s$ (resp. $s'$) be a semisimple element of $L$ (resp. $L'$) whose Lusztig series contains $\rho$ (resp. $\rho'$). 

\begin{prop}\label{DecompositionCuspidalPairViaXi}
The pairs $(L_\#(s),\rho_\#)$ and $(L'_\#(s'),\rho'_\#)$ are isomorphic. 
\end{prop}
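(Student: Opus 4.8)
The plan is to split the cuspidal Levi $L=\GL_{\mathbf{t}}\times G_{m-|\mathbf{t}|}$ into its linear part and its $G$-part and treat the two separately, reading off the statement from Theorem \ref{HoweCuspidalSupport}, Proposition \ref{PanGroups} and Corollary \ref{HoweLusztig}. Write $s=(s_{\GL},s_0)$ with $\boldsymbol{\sigma}\in\mathscr{E}(\GL_{\mathbf{t}},(s_{\GL}))$ and $\varphi\in\mathscr{E}(G_{m-|\mathbf{t}|},(s_0))$, and similarly $s'=(s'_{\GL},s'_0)$, $\rho'=\boldsymbol{\sigma}'\otimes\varphi'$ with $\varphi'=\theta^\flat(\varphi)$. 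Since the centralizer decomposition of Section \ref{CentralizersSection} and the Lusztig correspondence are both compatible with direct products, one gets $L_\#(s)=(\GL_{\mathbf{t}})_\#(s_{\GL})\times (G_{m-|\mathbf{t}|})_\#(s_0)$ and $\rho_\#=\boldsymbol{\sigma}_\#\otimes\varphi_\#$, and likewise for the primed objects; so it suffices to produce compatible isomorphisms for the two blocks.

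For the linear block, each $\sigma_i$ is cuspidal on $\GL_{t_i}$, so $s_i$ has irreducible characteristic polynomial of degree $t_i$ and $C_{\GL_{t_i}}(s_i)$ is a Coxeter torus, possibly over a finite extension of $\mathbb{F}_q$, on which the unique unipotent representation is trivial. Hence $(\GL_{\mathbf{t}})_\#(s_{\GL})$ is the product of these tori over the indices with $\sigma_i$ nontrivial, and $\boldsymbol{\sigma}_\#$ is its trivial representation. By Theorem \ref{HoweCuspidalSupport}, $\boldsymbol{\sigma}'$ differs from $\boldsymbol{\sigma}$ only by insertion or deletion of some trivial $\GL_1$-factors, and those contribute nothing to the $\#$-part; thus $(\GL_{\mathbf{t}})_\#(s_{\GL})$ and $(\GL_{\mathbf{t}'})_\#(s'_{\GL})$ agree as products of tori and $\boldsymbol{\sigma}_\#\simeq\boldsymbol{\sigma}'_\#$.

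For the $G$-block, $\varphi$ and $\varphi'=\theta^\flat(\varphi)$ are Howe-related cuspidal representations of the unitary dual pair $(G_{m-|\mathbf{t}|},G'_{m'-|\mathbf{t}'|})$ — the index $m'-|\mathbf{t}'|$ being the first occurrence index of $\varphi$, by the proof of Theorem \ref{HoweHarish-Chandra} — and $s_0$, $s'_0$ label their Lusztig series. Proposition \ref{PanGroups} then provides an isomorphism $(G_{m-|\mathbf{t}|})_\#(s_0)\simeq (G'_{m'-|\mathbf{t}'|})_\#(s'_0)$. For the representations, apply Corollary \ref{HoweLusztig} to this smaller dual pair: since $\varphi'$ lies in $\Theta^\flat(\varphi)$, the commutative diagram forces $\varphi'_\#=\widehat{\varphi_\#}$, and since $\varphi_\#$ is a unipotent representation of a product of general linear and unitary groups it is self-dual, so $\varphi'_\#\simeq\varphi_\#$. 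Multiplying the two blocks yields the desired isomorphism of pairs.

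The main obstacle is the last paragraph. One has to check with care that $s_0$ and $s'_0$ form a Howe-related pair of semisimple parameters in the precise sense needed for Proposition \ref{PanGroups} (this follows from $\varphi\leftrightarrow\theta^\flat(\varphi)$ together with Proposition 2.3 of \cite{AMR}), and — the genuinely delicate point — one has to absorb the contragredient $\iota$ coming from Corollary \ref{HoweLusztig}. This is exactly why it matters that the factors of $L_\#(s)$ are of type $A$ or ${}^2A$, whose unipotent representations are self-dual: the contragredient stabilises each unipotent Lusztig series and acts trivially on its combinatorial labelling. A secondary, more routine point is the identification of $\boldsymbol{\sigma}_\#$ with the trivial representation, which relies on the standard description of the Lusztig correspondence for cuspidal representations of general linear and unitary groups.
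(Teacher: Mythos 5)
Your proof is correct and takes essentially the same route as the paper's: split $L$ into its $\GL$-block and its $G$-block, note that the $\#$-construction respects this product and that the $\GL$-blocks (hence their $\#$-parts) agree up to trivial $\GL_1$-factors, then use Proposition \ref{PanGroups} for the groups and Theorem \ref{ReductionUnipotentCase} (via Corollary \ref{HoweLusztig}) for the representations of the cuspidal $G$-block. The only difference is that you explicitly absorb the contragredient $\iota$ by invoking self-duality of unipotent characters of products of general linear and unitary groups, a point the paper passes over when it directly asserts $\varphi_\#\simeq\varphi'_\#$ from Theorem \ref{ReductionUnipotentCase}.
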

\begin{proof}
According to Theorem \ref{HoweHarish-Chandra}, $L = \GL_\mathbf{t}\times T_r\times G_l$, $L' = \GL_\mathbf{t}\times T_{r'}\times G'_{l'}$, $\rho = \boldsymbol{\sigma}\otimes 1\otimes \varphi$ and $\rho' = \boldsymbol{\sigma}\otimes 1\otimes \varphi'$, where $\boldsymbol{\sigma}=\sigma_1\otimes\ldots\otimes\sigma_d$ is a product of non-trivial cuspidal representations and $\varphi'$ denotes the first occurrence of $\varphi$. 

Taking $s = s_{\boldsymbol{\sigma}}\times 1\times s_\varphi$ and $s' = s_{\boldsymbol{\sigma}}\times 1\times s_{\varphi'}$, where $s_\varphi$, $s_{\varphi'}$ and $s_{\boldsymbol{\sigma}}=s_1\times\cdots\times s_d$ are semisimple elements of $G_l$, $G'_{l'}$ and $GL_\mathbf{t}$ whose Lusztig series contain $\varphi$, $\varphi'$ and $\boldsymbol{\sigma}$, we have isomorphisms $L_\#(s) = (G_l)_\#(s_\varphi)\times (GL_\mathbf{t})_\#(s_{\boldsymbol{\sigma}})$, and $L' = (G'_{l'})_\#(s_{\varphi'})\times (GL_\mathbf{t})_\#(s_{\boldsymbol{\sigma}})$. Since $(G_l)_\#(s_\varphi)$ is isomorphic to $(G'_{l'})_\#(s_{\varphi'})$ (see Lemma \ref{PanGroups}), the groups $L_\#(s)$ and $L'_\#(s')$ are isomorphic. Theorem \ref{ReductionUnipotentCase} applied to the pair $(G_l,G_{l'})$ implies that $\varphi_\#\simeq\varphi'_\#$. This completes the proof.



\end{proof}

The preceding proposition allows us to identify the Harish-Chandra series $\Irr(G_\#(s),\rho_\#)$ and $\Irr(G'_\#(s'),\rho'_\#)$. Denote by $\mathbf{R}_{G_\#(s),\rho_\#}$ the projection of the representation $\mathbf{R}_{G_\#(s)}$ onto the series $\Irr(G_\#(s)\times G_\#(s),\rho_\#\otimes\rho_\#)$. The following is the main theorem of this section.

\begin{thm}\label{HC-UnipotentHC}
The representation $\omega^\flat_{m,m',\rho}$ is identified with $\mathbf{R}_{G_\#(s),\rho_\#}\otimes \omega^\flat_{m-l,m'-l,\rho_{m-l}}$  via the bijection
\begin{align}\label{HC-UnipotentHCBijection}
\Irr(G_m\times G'_{m'},\rho\otimes\rho')\simeq \Irr(C_{G_m}(s)\times C_{G'_{m'}}(s'),\rho_u\otimes \rho'_u),
\end{align}
where $s$ and $s'$ are rational semisimple elements of $L$ and $L'$ whose Lusztig series contain $\rho$ and $\rho'$ respectively.
\end{thm}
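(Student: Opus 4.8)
The plan is to combine the two main reduction results already established: Corollary~\ref{HoweLusztig}, which handles the passage from a full Lusztig series to unipotent series of centralizers, and Theorem~\ref{HoweHarish-Chandra} together with the compatibility of parabolic induction with the Lusztig correspondence (Proposition~\ref{LusztigHarish-Chandra} and its corollary), which handles the Harish-Chandra refinement. Concretely, I would first fix a cuspidal pair $(\mathbf{L},\rho)$ of $\mathbf{G}_m$ and the associated pair $(\mathbf{L}',\rho')$ of $\mathbf{G}'_{m'}$, and write the semisimple elements as $s = s_{\boldsymbol{\sigma}}\times 1\times s_\varphi$ and $s' = s_{\boldsymbol{\sigma}}\times 1\times s_{\varphi'}$ exactly as in the proof of Proposition~\ref{DecompositionCuspidalPairViaXi}. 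Under the Lusztig correspondence $\Xi^{G_m}_{s_m}$, a representation $\pi$ in $\mathscr{E}(G_m,(s))$ corresponds to $\pi_\#\otimes\pi_{m-l}$ with $\pi_\#$ a unipotent representation of $G_\#(s)$ and $\pi_{m-l}$ a unipotent representation of $G_{m-l}$; by Proposition~\ref{DecompositionCuspidalPairViaXi} the cuspidal-pair data transports as $\rho_\# $ on the linear/unitary factor and the cuspidal unipotent $\rho_{m-l}$ on the $G_{m-l}$-factor.

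The heart of the argument is then to intersect the Harish-Chandra decomposition with the Lusztig decomposition. I would argue that the bijection (\ref{HC-UnipotentHCBijection}) is obtained by restricting the bijection of Corollary~\ref{HoweLusztig} to the Harish-Chandra series indexed by $(\mathbf{L},\rho)$ on the left and $(L_\#(s),\rho_\#)\times(G_{m-l}\text{-series of }\rho_{m-l})$ on the right; the fact that these match up is precisely the commutativity of the corollary after Proposition~\ref{LusztigHarish-Chandra}, i.e. $\Xi^{G_m}_{s}$ intertwines $R_L^{G_m}$ with $R_{L_\#(s)}^{G_\#(s)}\times R_{L_{m-l}}^{G_{m-l}}$, so it sends $\Irr(G_m,\rho)$ onto $\Irr(G_\#(s),\rho_\#)\otimes\Irr(G_{m-l},\rho_{m-l})$. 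Having set up the bijection, I would compute the image of $\omega^\flat_{m,m',\rho}$ by using Theorem~\ref{ReductionUnipotentCase} (the AMR result, Th\'eor\`eme 2.6): it tells us $\omega^\flat_{m,m',s}$ corresponds to $\mathbf{R}_{G_\#(s),1}\otimes\omega^\flat_{m-l,m'-l,1}$, and projecting both sides onto the appropriate Harish-Chandra series. On the $G_\#(s)$-factor, projecting $\mathbf{R}_{G_\#(s),1}=\sum_{\pi\in\Irr(G_\#(s))}\pi\otimes\hat\pi$ onto the $\rho_\#$-series gives exactly $\mathbf{R}_{G_\#(s),\rho_\#}$; on the $(G_{m-l},G'_{m'-l})$-factor, Corollary~\ref{HoweUnipotentHC} says $\Theta^\flat_{m-l,m'-l}$ sends $\Irr(G_{m-l},\rho_{m-l})$ (a cuspidal unipotent series) to $\mathscr{R}(G'_{m'-l},\theta^\flat(\rho_{m-l})) = \mathscr{R}(G'_{m'-l},\rho'_{m'-l})$, so the projection of $\omega^\flat_{m-l,m'-l,1}$ is $\omega^\flat_{m-l,m'-l,\rho_{m-l}}$.

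The main obstacle I anticipate is bookkeeping rather than conceptual: one must check that the Lusztig-correspondence labelling of the cuspidal datum is consistent on both sides of the dual pair simultaneously, i.e. that the same element $s_{\boldsymbol{\sigma}}$ (hence the same $\rho_\#$) appears for $G_m$ and $G'_{m'}$, and that the first-occurrence cuspidal $\theta^\flat(\varphi)$ lies in the Lusztig series of $s_{\varphi'}$ with $(G_l)_\#(s_\varphi)\cong(G'_{l'})_\#(s_{\varphi'})$ — this is exactly what Proposition~\ref{DecompositionCuspidalPairViaXi} and Lemma~\ref{PanGroups} provide, so I would lean on them. A second, more delicate point is the interplay between the contragredient $\iota$ appearing in Corollary~\ref{HoweLusztig} and the statement here with $\rho_u\otimes\rho'_u$: I need $\rho'_\# = \hat{\rho}_\#$, which holds because $\rho_\#$ is a (cuspidal) unipotent representation of a product of linear and unitary groups, and such representations are self-dual, so $\iota$ acts trivially on the relevant series and the identification of $\mathbf{R}_{G_\#(s),\rho_\#}$ with its own ``twisted'' version is harmless. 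Once these compatibilities are in place, the theorem follows by assembling the two factors, so I would present it as: restrict the commutative square of Corollary~\ref{HoweLusztig} to the Harish-Chandra series cut out by $(\mathbf{L},\rho)$, invoke Proposition~\ref{LusztigHarish-Chandra} to see the restriction is well-defined, and read off the image of the Weil representation from Theorem~\ref{ReductionUnipotentCase} and Corollary~\ref{HoweUnipotentHC}.
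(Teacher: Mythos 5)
Your proposal is correct and follows essentially the same route as the paper: the paper likewise uses Proposition \ref{LusztigHarish-Chandra} (compatibility of the Lusztig correspondence with parabolic induction, applied to $L\times L'\subset G_m\times G'_{m'}$) to show that the bijection of Theorem \ref{ReductionUnipotentCase} restricts to the Harish-Chandra series bijection (\ref{HC-UnipotentHCBijection}), and then reads off the identification of $\omega^\flat_{m,m',\rho}$ with $\mathbf{R}_{G_\#(s),\rho_\#}\otimes\omega^\flat_{m-l,m'-l,\rho_{m-l}}$ by projecting the statement of Theorem \ref{ReductionUnipotentCase}. Your extra remarks on the labelling via Proposition \ref{DecompositionCuspidalPairViaXi}, the self-duality of $\rho_\#$, and Corollary \ref{HoweUnipotentHC} only spell out compatibilities the paper treats implicitly (and records after the proof), so there is no substantive difference.
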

\begin{proof}
By Proposition \ref{LusztigHarish-Chandra}, the induced representation $R_{L\times L'}^{G_m\times G'_{m'}}(\rho\otimes\rho')$ belongs to $\mathscr{R}(G_m\times G'_{m'},(s\times s'))$. Therefore the Harish-Chandra series $\Irr(G_m\times G'_{m'},\rho\otimes\rho')$ is contained in $\mathscr{E}(G_m\times G'_{m'},(s \times s'))$.

Let $\pi$ and $\pi'$ be irreducible representations of $G_m$ and $G'_{m'}$ respectively. Proposition \ref{LusztigHarish-Chandra} implies that
$$
\langle \pi_u\otimes \pi'_u,  R_{C_L(s)}^{C_G(s)}(\rho_u)  \otimes R_{C_{L'}(s')}^{C_{G'}(s')} (\rho'_u)\rangle
=
\langle \pi,R_{C_L(s)}^{C_G(s)}\rho\rangle\langle\pi',R_{C_{L'}(s')}^{C_{G'}(s')} \rho'\rangle. 
$$
This means that the bijection Theorem \ref{ReductionUnipotentCase} restricts to (\ref{HC-UnipotentHCBijection}). The statement about the representation $\omega^\flat_{m,m',\rho}$ also follows. 
\end{proof}

From the proof we also realize that $\Theta^\flat_{m-l,m'-l}$ sends the Harish-Chandra series $\Irr(G_{m-l},\rho_{m-l})$ to $\mathscr{R}(G'_{m'-l},\rho'_{m'-l})$. As a consequence we have the following result. Its proof is the same as Corollary \ref{HoweLusztig} and will be omited.  
\begin{cor}\label{CorollaryHC-HCUnipotent}
 The following diagram is commutative :
\begin{center} 
 \begin{tikzcd}
  \Irr(G_m,\rho) \arrow[d, "\Theta^\flat_{m,m'}"] \arrow[r, "\Xi^{G_m}_s" swap, "\sim"] 
                                                & \Irr(G_\#(s),\rho_\#)\times \Irr(G_{m-l},\rho_{m-l}) \arrow[d, "\iota\hspace{0.05cm}\otimes\hspace{0.05cm}\Theta^\flat_{m-l,m'-l}"] \\
\mathscr{R}(G'_{m'},\rho') \arrow[r, "\sim", "\Xi^{G'_{m'}}_{s'}" swap] 
 												   & \mathscr{R}(G_\#(s),\rho_\#)\otimes \mathscr{R}(G'_{m'-l},\rho'_{m'-l}).
 \end{tikzcd}
\end{center}
\end{cor}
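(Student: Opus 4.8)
The plan is to mimic the proof of Corollary \ref{HoweLusztig} verbatim, with Theorem \ref{HC-UnipotentHC} playing the role that Theorem \ref{ReductionUnipotentCase} played there. First I would fix an irreducible representation $\pi$ in the Harish-Chandra series $\Irr(G_m,\rho)$ and an irreducible subrepresentation $\pi'$ of $\Theta^\flat_{m,m'}(\pi)$. By Theorem \ref{HoweHarish-Chandra} this $\pi'$ lies in the series $\Irr(G'_{m'},\rho')$, so both composites in the square are defined on $\pi$, and the diagram asserts the two equalities $\Xi^{G'_{m'}}_{s'}(\pi')=(\iota\otimes\Theta^\flat_{m-l,m'-l})\bigl(\Xi^{G_m}_s(\pi)\bigr)$. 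Writing $\Xi^{G_m}_s(\pi)=\pi_\#\otimes\pi_{m-l}$ and $\Xi^{G'_{m'}}_{s'}(\pi')=\pi'_\#\otimes\pi'_{m'-l}$, commutativity reduces to showing $\pi'_\#\simeq\hat{\pi}_\#$ and that $\pi'_{m'-l}$ is an irreducible subrepresentation of $\Theta^\flat_{m-l,m'-l}(\pi_{m-l})$, exactly as in the unipotent case.

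Next I would invoke Theorem \ref{HC-UnipotentHC}: under the bijection \eqref{HC-UnipotentHCBijection} the representation $\omega^\flat_{m,m',\rho}$ is identified with $\mathbf{R}_{G_\#(s),\rho_\#}\otimes\omega^\flat_{m-l,m'-l,\rho_{m-l}}$. Since $\langle\omega^\flat_{m,m',\rho},\pi\otimes\pi'\rangle\neq 0$ and \eqref{HC-UnipotentHCBijection} carries $\pi\otimes\pi'$ to $(\pi_\#\otimes\pi_{m-l})\otimes(\pi'_\#\otimes\pi'_{m'-l})$, it follows that $\pi_\#\otimes\pi'_\#$ is an irreducible constituent of $\mathbf{R}_{G_\#(s),\rho_\#}$ and $\pi_{m-l}\otimes\pi'_{m'-l}$ is an irreducible constituent of $\omega^\flat_{m-l,m'-l,\rho_{m-l}}$. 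As recalled in Section \ref{PreliminariesRepresentationTheory}, $\mathbf{R}_{G_\#(s)}=\sum_{\sigma}\sigma\otimes\hat{\sigma}$, and projecting onto the series indexed by $\rho_\#$ preserves this shape; hence $\pi_\#\otimes\pi'_\#$ being a constituent forces $\pi'_\#=\hat{\pi}_\#=\iota(\pi_\#)$. Similarly, $\pi_{m-l}\otimes\pi'_{m'-l}$ being a constituent of $\omega^\flat_{m-l,m'-l,\rho_{m-l}}$ says precisely that $\pi'_{m'-l}$ occurs in $\Theta^\flat_{m-l,m'-l}(\pi_{m-l})$. Unwinding the definitions of $\Xi^{G_m}_s$ and $\Xi^{G'_{m'}}_{s'}$ then gives the commutativity of the square.

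I expect the only delicate points to be bookkeeping rather than mathematics. One must check that the factorisation $\rho_u=\rho_\#\otimes\rho_{m-l}$ of the cuspidal unipotent support (coming from Proposition \ref{DecompositionCuspidalPairViaXi}) is compatible, on both the $G_m$ and the $G'_{m'}$ side, with the factorisation $C_{G_m}(s)=G_\#(s)\times G_{m-l}$ used to form \eqref{HC-UnipotentHCBijection}, so that identifying $\omega^\flat_{m,m',\rho}$ with a tensor product indexed separately by the two factors is legitimate; this is guaranteed by Proposition \ref{PanGroups} and Proposition \ref{DecompositionCuspidalPairViaXi}. A second minor point is the passage between ``irreducible subrepresentation of $\Theta^\flat$'' and ``irreducible constituent of $\omega^\flat$'', which is immediate from the definition of $\Theta^\flat$ as the full Howe correspondence attached to $\omega^\flat$. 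Once these are in place the argument is a verbatim copy of the proof of Corollary \ref{HoweLusztig}, which is why the statement can be left with the proof omitted.
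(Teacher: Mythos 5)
Your proposal is correct and follows exactly the route the paper intends: the paper omits the proof precisely because it is the same as that of Corollary \ref{HoweLusztig}, with Theorem \ref{HC-UnipotentHC} substituted for Theorem \ref{ReductionUnipotentCase}, which is what you carry out. Your bookkeeping remarks (compatibility of the factorisations via Propositions \ref{PanGroups} and \ref{DecompositionCuspidalPairViaXi}, and Theorem \ref{HoweHarish-Chandra} placing $\pi'$ in $\Irr(G'_{m'},\rho')$) match the ingredients the paper relies on.
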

\section{Extremal representations}\label{SectionExtremalRepresentations}
In previous section we saw how the Lusztig correspondence allows the study of the Howe correspondence to be brought to the study of its effect on unipotent representations. This enables us to extend results in \cite{Epequin}. 
 
Let $(G_m,G'_{m'})$ denote an unitary dual pair, and $\pi$ be an irreducible representation of $G_m$. We start by using Corollary \ref{CorollaryHC-HCUnipotent} to reduce the set $\Theta^\flat_{m,m'}(\pi)$ to a set of unipotent representations.

\begin{prop}\label{bijectionThetaThetaUnipotent}
The representation $\pi'$ of $G'_{m'}$ belongs to $\Theta^\flat_{m,m'}(\pi)$, if and only if, $\pi'_\#=\hat{\pi}_\#$ and $\pi'_{m'-l}$ belongs to $\Theta^\flat_{m-l,m'-l}(\pi_{m-l})$. In particular, the map sending $\pi'\in\Irr(G'_{m'})$ to $\pi'_{m'-l}\in\Irr(G'_{m'-l})$ defines a bijection between $\Theta^\flat_{m,m'}(\pi)$ and $\Theta^\flat_{m-l,m'-l}(\pi_{m-l})$.
\end{prop}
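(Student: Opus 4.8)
The plan is to deduce Proposition~\ref{bijectionThetaThetaUnipotent} directly from the commutative diagram of Corollary~\ref{CorollaryHC-HCUnipotent}, which already packages all the representation-theoretic content. First I would fix $\pi\in\Irr(G_m,\rho)$ and let $s$ be a rational semisimple element of $L$ whose Lusztig series contains $\rho$, so that $\pi$ lies in $\mathscr{E}(G_m,(s))$ and the bijection $\Xi^{G_m}_s$ sends $\pi$ to $\pi_\#\otimes\pi_{m-l}$, with $\pi_\#\in\Irr(G_\#(s),\rho_\#)$ and $\pi_{m-l}\in\Irr(G_{m-l},\rho_{m-l})$. Similarly any $\pi'$ occurring in $\Theta^\flat_{m,m'}(\pi)$ automatically lies in $\mathscr{R}(G'_{m'},\rho')\subset\mathscr{E}(G'_{m'},(s'))$ by Theorem~\ref{HoweHarish-Chandra} (and the discussion preceding Theorem~\ref{ReductionUnipotentCase}), so $\Xi^{G'_{m'}}_{s'}$ is defined on $\pi'$ and sends it to some $\pi'_\#\otimes\pi'_{m'-l}$.

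The core step is to chase the diagram of Corollary~\ref{CorollaryHC-HCUnipotent}. Going down then right sends $\pi$ to $\Xi^{G'_{m'}}_{s'}\bigl(\Theta^\flat_{m,m'}(\pi)\bigr)$; going right then down sends it to $(\iota\otimes\Theta^\flat_{m-l,m'-l})(\pi_\#\otimes\pi_{m-l})=\hat\pi_\#\otimes\Theta^\flat_{m-l,m'-l}(\pi_{m-l})$. Since $\Xi^{G'_{m'}}_{s'}$ is a bijection (hence an isometry of the spanned module categories), matching irreducible constituents on the two sides shows exactly that $\pi'$ is a constituent of $\Theta^\flat_{m,m'}(\pi)$ if and only if $\Xi^{G'_{m'}}_{s'}(\pi')=\pi'_\#\otimes\pi'_{m'-l}$ satisfies $\pi'_\#=\hat\pi_\#$ and $\pi'_{m'-l}$ is a constituent of $\Theta^\flat_{m-l,m'-l}(\pi_{m-l})$. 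This is the first assertion.

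For the final sentence I would observe that, by Proposition~\ref{DecompositionCuspidalPairViaXi}, the data $\pi'_\#$ is forced to equal the single representation $\hat\pi_\#$ of the fixed group $G'_\#(s')\simeq G_\#(s)$, so the only free part of $\Xi^{G'_{m'}}_{s'}(\pi')$ is $\pi'_{m'-l}$. Composing the restriction of the bijection $\Xi^{G'_{m'}}_{s'}$ to $\Theta^\flat_{m,m'}(\pi)$ with the projection onto the second tensor factor therefore gives a well-defined map $\pi'\mapsto\pi'_{m'-l}$ landing in $\Theta^\flat_{m-l,m'-l}(\pi_{m-l})$; injectivity follows because $\Xi^{G'_{m'}}_{s'}$ is injective and $\pi'_\#$ is constant, and surjectivity because any $\pi'_{m'-l}\in\Theta^\flat_{m-l,m'-l}(\pi_{m-l})$ arises as the image under $\Xi^{G'_{m'}}_{s'}$ of the unique $\pi'$ with $\Xi^{G'_{m'}}_{s'}(\pi')=\hat\pi_\#\otimes\pi'_{m'-l}$, and that $\pi'$ lies in $\Theta^\flat_{m,m'}(\pi)$ by the first assertion.

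I expect the only real subtlety to be bookkeeping: one must check that all the groups and semisimple elements appearing when we invoke Corollary~\ref{CorollaryHC-HCUnipotent} for the pair $(G_m,G'_{m'})$ are the same ones — in particular that the common rank $l$ and the element $s$ (up to conjugacy) are those attached to the cuspidal pair $(\mathbf{L},\rho)$ via Proposition~\ref{DecompositionCuspidalPairViaXi}, so that $G'_\#(s')$ is genuinely identified with $G_\#(s)$ and $\hat\pi_\#$ makes sense as an element of $\Irr(G'_\#(s'),\rho'_\#)$. Once that identification is in place, the argument is a formal diagram chase with no further computation.
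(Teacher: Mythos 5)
Your argument is correct and matches the paper's (implicit) proof: the proposition is exactly what one reads off from Corollary \ref{CorollaryHC-HCUnipotent} (equivalently Theorem \ref{HC-UnipotentHC}), using that $\Xi^{G'_{m'}}_{s'}$ is a bijection, that $\mathbf{R}_{G_\#(s),\rho_\#}$ pairs $\pi_\#$ with $\hat{\pi}_\#$, and that any constituent of $\Theta^\flat_{m,m'}(\pi)$ lies in the series of $s'$ so the chase applies. The bookkeeping you flag (identifying $G'_\#(s')$ with $G_\#(s)$ via Propositions \ref{PanGroups} and \ref{DecompositionCuspidalPairViaXi}) is precisely how the paper sets things up, so no gap remains.
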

This proposition tells us, in other words, that the Lusztig correpondence restricts to 
$$
\Xi_s^{G'_{m'}}:\Theta^\flat_{m,m'}(\pi)\simeq \{\hat{\pi}_\#\}\times\Theta^\flat_{m-l,m'-l}(\pi_{m-l}).
$$

Definition 7 in \cite{Epequin} introduced a partial order on the set $\Theta^\flat_{m,m'}(\pi)$, where $\pi$ is a unipotent representation of $G_m$. Using Proposition \ref{bijectionThetaThetaUnipotent}, we can extend this order to arbitrary irreducible representations. 

\begin{defi}
Let $\pi'$ and $\sigma'$ belong to $\Theta^\flat_{m,m'}(\pi)$. Then $\pi'\leq\sigma'$ if and only if $\pi'_{m'-l}\leq\sigma'_{m'-l}$. This defines a partial order in $\Theta^\flat_{m,m'}(\pi)$.
\end{defi}
When $\pi$ is a unipotent representation, the order defined on $\Theta^\flat_{m,m'}(\pi)$ is not necessarily total. However, in Theorems 9 and 10 of \cite{Epequin} we were able to find a minimal and a maximal representations for this order. The same is true for arbitrary irreducible representations.

\begin{thm}\label{ExtremalRepresentations}
Let $\pi$ be an irreducible representation of $G_m$. There exists a unique minimal (resp. maximal) irreducible representation $\pi'_\mathrm{min}$ (resp. $\pi'_\mathrm{max}$) in $\Theta^\flat_{m,m'}(\pi)$.
\end{thm}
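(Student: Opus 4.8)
The plan is to reduce everything to the unipotent case via Proposition \ref{bijectionThetaThetaUnipotent}, and then invoke Theorems 9 and 10 of \cite{Epequin}. First I would recall that, by Proposition \ref{bijectionThetaThetaUnipotent}, the map $\pi'\mapsto\pi'_{m'-l}$ is an order-preserving bijection from $\Theta^\flat_{m,m'}(\pi)$ onto $\Theta^\flat_{m-l,m'-l}(\pi_{m-l})$ — order-preserving essentially by definition of the partial order we just introduced (indeed $\pi'\leq\sigma'$ was \emph{defined} to mean $\pi'_{m'-l}\leq\sigma'_{m'-l}$), so the bijection is in fact an isomorphism of posets. Consequently $\Theta^\flat_{m,m'}(\pi)$ has a unique minimal (resp. maximal) element if and only if $\Theta^\flat_{m-l,m'-l}(\pi_{m-l})$ does, and the extremal elements correspond under the bijection.

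Next I would observe that $\pi_{m-l}$ is a unipotent representation of the unitary group $G_{m-l}$, by construction of the bijection $\Xi^{G_m}_s$ in Section \ref{HoweLusztigCorrespondence} (it lands in $\mathscr{E}(G_{m-l},(1))$). Therefore Theorems 9 and 10 of \cite{Epequin}, which establish the existence and uniqueness of a minimal and a maximal representation in $\Theta^\flat_{m-l,m'-l}(\varphi)$ for any unipotent $\varphi$, apply directly to $\varphi=\pi_{m-l}$. This produces $(\pi_{m-l})'_{\mathrm{min}}$ and $(\pi_{m-l})'_{\mathrm{max}}$ in $\Theta^\flat_{m-l,m'-l}(\pi_{m-l})$. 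Pulling these back through the poset isomorphism of the first paragraph, I define $\pi'_{\mathrm{min}}$ (resp. $\pi'_{\mathrm{max}}$) to be the unique element of $\Theta^\flat_{m,m'}(\pi)$ with $(\pi'_{\mathrm{min}})_{m'-l}=(\pi_{m-l})'_{\mathrm{min}}$ (resp. with $(\pi'_{\mathrm{max}})_{m'-l}=(\pi_{m-l})'_{\mathrm{max}}$); explicitly, by the remark after Proposition \ref{bijectionThetaThetaUnipotent}, $\pi'_{\mathrm{min}}=(\Xi^{G'_{m'}}_{s'})^{-1}(\hat{\pi}_\#,(\pi_{m-l})'_{\mathrm{min}})$ and similarly for the maximum. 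Minimality, maximality and uniqueness transfer immediately because the poset structures are identified.

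The only genuinely substantive ingredient is Theorem \ref{ExtremalRepresentations} in the unipotent case, i.e. Theorems 9 and 10 of \cite{Epequin}, which I am permitted to assume; the rest is bookkeeping through the Lusztig correspondence. The one point requiring a word of care — and the place I would expect a referee to push — is the verification that the bijection of Proposition \ref{bijectionThetaThetaUnipotent} really is order-preserving in \emph{both} directions and not merely a set bijection; but this is immediate from the definition of the order, since the order on $\Theta^\flat_{m,m'}(\pi)$ was transported from $\Theta^\flat_{m-l,m'-l}(\pi_{m-l})$ precisely along this map. I would state this explicitly as the first line of the proof and then cite \cite{Epequin} for the unipotent case.
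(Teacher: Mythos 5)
Your proposal is correct and follows essentially the same route as the paper: reduce via the bijection of Proposition \ref{bijectionThetaThetaUnipotent} (which is a poset isomorphism by the very definition of the order) to the unipotent representation $\pi_{m-l}$, invoke Theorems 9 and 10 of \cite{Epequin}, and pull the extremal elements back. Your explicit remark that the bijection is order-preserving in both directions is a welcome clarification but does not change the argument.
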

\begin{proof}
Let $\sigma$ denote the unipotent representation $\pi_{m-l}$. According to Theorems 9 and 10 in \cite{Epequin}, there is an irreducible representation $\sigma'_\mathrm{min}$ (resp. $\sigma'_\mathrm{max}$) in $\Theta^\flat_{m-l,m'-l}(\sigma)$ verifying $\sigma'_\mathrm{min}\leq \sigma'$ (resp. $\sigma'\leq \sigma'_\mathrm{max}$) for all $\sigma'$ in $\Theta^\flat_{m-l,m'-l}(\sigma)$. Thanks to Proposition \ref{bijectionThetaThetaUnipotent}, we see that $\pi'_\mathrm{min}$ (resp. $\pi'_\mathrm{max}$) verifying $(\pi'_\mathrm{min})_{m'-l}=\sigma'_\mathrm{min}$ (resp. $(\pi'_\mathrm{max})_{m'-l}=\sigma'_\mathrm{max}$) is the desired minimal (resp. maximal) representation. 
\end{proof}
\section{Correspondence for Weyl groups}
Let $\mathbf{G}$ be a reductive group defined over $\mathbb{F}_q$, and $\mathbf{P}=\mathbf{L}\mathbf{U}$ be a Levi decomposition of the rational parabolic subgroup $\mathbf{P}$. For a cuspidal representation $\delta$ of $L$ set
 $$
 W_\mathbf{G}(\delta) = \{x\in N_{G}(\mathbf{L})/L : {}^x\delta=\delta\}.
 $$
\begin{prop}\cite[Corollary 5.4]{HL} and \cite[Corollary 2]{Geck}\label{HLBijection}
There is an isomorphism 
$$
\End_G(R_L^G(\delta)) \simeq \mathbb{C}[W_\mathbf{G}(\delta)].
$$
In particular, irreducible representations in the \emph{Harish-Chandra series} $\Irr(\mathbf{G},\delta)$ are indexed by irreducible representations of $W_\mathbf{G}(\delta)$. We refer to this as the \emph{Howlett-Lehrer bijection}.
\end{prop}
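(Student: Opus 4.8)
The plan is to take the algebra isomorphism $\End_G(R_L^G(\delta)) \simeq \mathbb{C}[W_\mathbf{G}(\delta)]$ as the essential input, supplied by \cite{HL} and \cite{Geck}, and to derive the indexing statement by the standard semisimple-module (double-centralizer) argument. First I would recall from Harish-Chandra theory that, because $\delta$ is cuspidal, an irreducible representation $\pi$ of $G$ lies in the Harish-Chandra series $\Irr(\mathbf{G},\delta)$ precisely when $\pi$ occurs as a constituent of $R_L^G(\delta)$: transitivity of Harish-Chandra induction together with the fact that the cuspidal support is well defined up to $G$-conjugacy, and that distinct Harish-Chandra series are disjoint, guarantees this. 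Thus the set we wish to parametrize is exactly the set of irreducible constituents of the completely reducible $\mathbb{C}[G]$-module $R_L^G(\delta)$.

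Next I would invoke the general structure of a semisimple module $V$ over the semisimple algebra $\mathbb{C}[G]$. Writing $E=\End_G(V)$, one has the isotypic decomposition
\[
V \;\simeq\; \bigoplus_{j} S_j \otimes \Hom_G(S_j,V),
\]
where the $S_j$ run over the pairwise non-isomorphic irreducible constituents of $V$, each $\Hom_G(S_j,V)$ is an irreducible $E$-module, and every irreducible $E$-module arises this way; hence $S_j \mapsto \Hom_G(S_j,V)$ is a bijection between the set of irreducible constituents of $V$ and $\Irr(E)$ (the multiplicity of $S_j$ in $V$ being $\dim \Hom_G(S_j,V)$, so $R_L^G(\delta)$ need not be multiplicity-free). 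Applying this with $V=R_L^G(\delta)$, combining with $E\simeq\mathbb{C}[W_\mathbf{G}(\delta)]$, and using the identification of the set of constituents with $\Irr(\mathbf{G},\delta)$ from the first paragraph, yields the desired bijection $\Irr(\mathbf{G},\delta)\leftrightarrow\Irr(W_\mathbf{G}(\delta))$.

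The one delicate point — and the one that the cited papers are there to handle — is the algebra isomorphism itself. Howlett and Lehrer identify $\End_G(R_L^G(\delta))$ with a twisted group algebra $\mathbb{C}_\mu[W_\mathbf{G}(\delta)]$ attached to a $2$-cocycle $\mu$ (and carrying an associated Hecke-algebra deformation structure on the parabolic reflection part of $W_\mathbf{G}(\delta)$), and it is Geck's result \cite[Corollary 2]{Geck}, building on work of Lusztig and others, that ensures the cocycle $\mu$ is cohomologically trivial, so that the endomorphism algebra is the untwisted group algebra $\mathbb{C}[W_\mathbf{G}(\delta)]$. I would cite \cite[Corollary 5.4]{HL} and \cite[Corollary 2]{Geck} verbatim for this step. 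Consequently the genuine obstacle lies entirely in that cited input; granted it, the argument above is short and formal, and I would present it essentially as written.
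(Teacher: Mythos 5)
Your proposal is correct and matches the paper's treatment: the paper offers no independent proof, simply citing \cite[Corollary 5.4]{HL} and \cite[Corollary 2]{Geck} for the isomorphism $\End_G(R_L^G(\delta)) \simeq \mathbb{C}[W_\mathbf{G}(\delta)]$ (Howlett--Lehrer giving the twisted group algebra and Geck the triviality of the cocycle), with the indexing of $\Irr(\mathbf{G},\delta)$ by $\Irr(W_\mathbf{G}(\delta))$ then following by exactly the standard isotypic/double-centralizer argument you spell out.
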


Between the classical groups, those having cuspidal unipotent representations are scarce. For instance, the only linear group affording a unipotent cuspidal representation is $\GL_1(q)$, the representation being trivial. The only unitary groups having such a representation are $U_{k(k+1)/2}(q)$ for non-negative integers $k$. Furthermore, this representation is unique, and we denote it by $\lambda_k$. Since cuspidal representations are preserved on the first occurrence, the Howe correspondance between cuspidal unipotent representations can be seen as a correspondance (also denoted by) $\theta^\flat$ between non-negative integers (cf. Section 2 in \cite{Epequin}).

Let $m(k)$ denote the Witt index of the group $U_{k(k+1)/2}(q)$. The discussion above implies that unipotent Harish-Chandra series of the unitary group $G_m$ can be parametrized by cuspidal pairs $(G_{m(k)}\times T_{m-m(k)},\lambda_k\otimes 1)$, for non-negative integers $k$. The corresponding series will be denoted by $\Irr(G_m)_k$. The unicity of $\lambda_k$ implies that the condition on the elements of the group $W_{G_m}(\lambda_k \otimes 1)$ is trivial. Therefore, the Howlett-Lehrer bijection identifies $\Irr(G_m)_k$ to the set of irreducible representation of $N_{G_m}(L_k)/L_k$, which is a Weyl group of type $\mathbf{B}_{m-m(k)}$.

Let $(G_m,G'_{m'})$ be a unitary dual pair. Let $(\mathbf{L},\rho)$ be a cuspidal pair of $\mathbf{G}_m$, and $(\mathbf{L}',\rho')$ be the unique cuspidal pair of $\mathbf{G}'_{m'}$ such that $\Theta^\flat_{m,m'}$ sends the series $\Irr(G_m,\rho)$ to $\mathscr{R}(G'_{m'},\rho')$.
Proposition \ref{HLBijection} yields a bijection 
$$
\Irr(W_{\mathbf{G}_m}(\rho)\times W_{\mathbf{G}'_{m'}}(\rho'))\simeq \Irr(G_m\times G'_{m'},\rho\otimes\rho').
$$
This bijection identifies the representation $\omega^\flat_{m,m',\rho}$ with a representation of the direct product of Weyl groups $W_{\mathbf{G}_m}(\rho)\times W_{\mathbf{G}'_{m'}}(\rho')$ that we denote by $\Omega_{m,m',\rho}$. When $\rho$ is the cuspidal unipotent representation $\lambda_k\otimes 1$, Corollary \ref{HoweUnipotentHC} implies that $\rho'$ is equal to $\lambda_{k'}\otimes 1$, where $k'=\theta^\flat(k)$. In this case $\Omega_{m,m',\rho}$ is denoted by $\Omega_{m,m',k}$. 

Let $r$ (resp. $r'$) be equal to $m-m(k)$ (resp. $m'-m'(k')$). Aubert, Michel and Rouquier proved the following. 

\begin{thm}\cite[Theorem 3.10]{AMR}\label{ReductionWeilUnipotent}
Let $(G_m,G'_{m'})$ be a unitary dual pair. The bijection
$$
\Irr(G_m)_k\times \Irr(G'_{m'})_{k'}\simeq \Irr(W_{m-m(k)}\times W_{m'-m'(k')}),
$$
identifies $\omega^\flat_{m,m',k}$ with the representation $\Omega_{m,m',k}$ whose character is :
\begin{align}\label{u1}
\sum_{l=0}^{\min(r,r')}\sum_{\chi\in\Irr(W_l)}(\Ind_{W_l\times W_{r-l}}^{W_r}\chi \otimes 1) \otimes (\Ind_{W_l\times W_{r'-l}}^{W_r'}\sgn\chi \otimes 1),
\end{align}
when $k$ is odd or $k=k'=0$; and
\begin{align}\label{u2}
\sum_{l=0}^{\min(r,r')}\sum_{\chi\in\Irr(W_l)}(\Ind_{W_l\times W_{r-l}}^{W_r}\chi \otimes \sgn) \otimes (\Ind_{W_l\times W_{r'-l}}^{W_r'}\sgn\chi \otimes 1),
\end{align}
otherwise.
\end{thm}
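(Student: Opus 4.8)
The plan is to transport the geometric decomposition of $\omega^\flat_{m,m'}$ supplied by Corollary \ref{coinv-submod} down to the level of the Weyl groups $W_r\times W_{r'}$ by means of the Howlett--Lehrer bijection. Write $\delta=\lambda_k\otimes 1$ and $\delta'=\lambda_{k'}\otimes 1$ for the two cuspidal unipotent data, carried by $\mathbf{L}_k=\mathbf{G}_{m(k)}\times\mathbf{T}_r$ and $\mathbf{L}'_{k'}=\mathbf{G}'_{m'(k')}\times\mathbf{T}'_{r'}$, with relative Weyl groups $W_r$ and $W_{r'}$ of type $\mathbf{B}$. First I would record that, by Proposition \ref{HLBijection} applied to $\mathbf{G}_m\times\mathbf{G}'_{m'}$, one has $R_{\mathbf{L}_k\times\mathbf{L}'_{k'}}^{\mathbf{G}_m\times\mathbf{G}'_{m'}}(\delta\otimes\delta')\simeq\bigoplus_{\chi,\chi'}(\chi\otimes\chi')\boxtimes(\pi_\chi\otimes\pi'_{\chi'})$ as a $\mathbb{C}[W_r\times W_{r'}]\otimes\mathbb{C}[G_m\times G'_{m'}]$-bimodule, so that the representation $\Omega_{m,m',k}$ is, up to a contragredient which is irrelevant since Weyl-group representations are self-dual, the module $\mathrm{Hom}_{G_m\times G'_{m'}}\!\bigl(R_{\mathbf{L}_k\times\mathbf{L}'_{k'}}(\delta\otimes\delta'),\omega^\flat_{m,m'}\bigr)$ over its own endomorphism algebra $\mathbb{C}[W_r\times W_{r'}]$. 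Thus it suffices to compute this $\mathrm{Hom}$-space together with its $W_r\times W_{r'}$-action.

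Next I would compute the relevant Harish-Chandra restriction. Applying Corollary \ref{coinv-submod}(a) to the maximal parabolic of $\mathbf{G}_m$ with Levi $\GL_r\times G_{m(k)}$, then Corollary \ref{coinv-submod}(b) to the Levi $\GL_{r'}\times G'_{m'(k')}$ of $\mathbf{G}'_{m'}$, and finally restricting the two $\GL$-factors to their diagonal tori, expresses $\prescript{*}{}{R}_{\mathbf{L}_k\times\mathbf{L}'_{k'}}(\omega^\flat_{m,m'})$ as a sum of induced pieces, each carrying one tensor factor of the form $\mathbf{R}_{\GL_i}$ (bridging a $\GL_i$ inside $G_m$ and a $\GL_i$ inside $G'_{m'}$) and one factor involving $\omega^\flat$ on the complementary blocks. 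Since $\lambda_k$ is cuspidal, Theorem \ref{HoweCuspidalSupport} together with Corollary \ref{HoweUnipotentHC} forces the $G_{m(k)}$-block (resp. the $G'_{m'(k')}$-block) to be exactly $\lambda_k$ (resp. $\lambda_{k'}=\theta^\flat(\lambda_k)$) whenever the piece contributes, and collapses the resulting double sum to a single sum over an integer $l\in\{0,\dots,\min(r,r')\}$ --- the number of $\GL_1$-factors matched across the two towers. After reassembling by transitivity of Harish-Chandra induction, the $l$-th term is $R^{G_m\times G'_{m'}}_{\mathbf{M}^{(l)}\times\mathbf{M}'^{(l)}}$ of a level-$l$ block, with $\mathbf{M}^{(l)}=\mathbf{G}_{m(k)+l}\times\mathbf{T}_{r-l}$ and $\mathbf{M}'^{(l)}=\mathbf{G}'_{m'(k')+l}\times\mathbf{T}'_{r'-l}$. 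The point is that the relative Weyl groups of $\mathbf{M}^{(l)}$ and $\mathbf{M}'^{(l)}$ with respect to the fixed cuspidal data are $W_l\times W_{r-l}$ and $W_l\times W_{r'-l}$; since the Howlett--Lehrer bijection is compatible with transitive Harish-Chandra induction, induction from $\mathbf{M}^{(l)}$ becomes $\Ind_{W_l\times W_{r-l}}^{W_r}$, which is precisely the outer shape of the formulas \eqref{u1}--\eqref{u2}.

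It then remains to identify, for each $l$, the level-$l$ block with the $W_l\times W_l$-representation $\bigoplus_{\chi\in\Irr(W_l)}\chi\boxtimes\sgn\chi$ tensored with the trivial representation of $W_{r-l}\times W_{r'-l}$, and to pin down the global $\sgn$-twist on the $W_r$-factor that separates \eqref{u2} from \eqref{u1}. This reduces to three ingredients: (a) the decomposition $\mathbf{R}_{\GL_l}=\bigoplus_{\mu\vdash l}\pi_\mu\boxtimes\hat{\pi}_\mu$ and the identification $\Irr(\GL_l,1)\simeq\Irr(S_l)$; (b) the fact that Harish-Chandra inducing $\pi_\mu\otimes\lambda_k$ from $\GL_l\times\mathbf{G}_{m(k)}$ into $\mathbf{G}_{m(k)+l}$ is, at Weyl-group level, inflation along $S_l=W_{\GL_l}(1)\hookrightarrow W_l=W_{\mathbf{G}_{m(k)+l}}(\lambda_k\otimes 1)$, that is, essentially $\Ind_{S_l}^{W_l}$; and (c) the precise character of $\mathbb{F}_{q^2}^{\times}$ --- the factor $\det^{(q-1)/2}$ appearing in Lemma \ref{mix-meta}, which the $\flat$-normalization removes only up to a residual sign --- by which $\omega^\flat$ couples the matched $\GL_1$'s; its effect on the second $\GL_l$-factor produces the $\sgn$ in $\sgn\chi$, while its interaction with the parity of $k$ (recall $\lambda_k$ is the unipotent cuspidal representation of $U_{k(k+1)/2}(q)$) produces the global sign distinguishing the two cases.

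The hard part will be exactly this last step: propagating the full $W_r\times W_{r'}$-equivariance --- not merely the multiplicities --- through the filtration of Theorem \ref{coinv-filtr}, and in particular matching the a priori $S_l\times S_l$-symmetry of the $\mathbf{R}_{\GL_l}$-blocks with the $W_l\times W_l$-symmetry appearing in \eqref{u1}--\eqref{u2}, while keeping exact account of the one-dimensional sign characters. Everything else is bookkeeping with adjunctions, transitivity and the structural results already established; by contrast, the sign analysis genuinely uses the $\flat$-normalization and the arithmetic of first occurrence for cuspidal unipotent representations.
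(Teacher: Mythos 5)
You are proposing a proof for a statement that this paper does not prove at all: Theorem \ref{ReductionWeilUnipotent} is quoted verbatim from Aubert--Michel--Rouquier (Th\'eor\`eme 3.10 of \cite{AMR}) and is used here as an external input, so there is no internal argument to compare yours with. Judged on its own terms, your sketch is not yet a proof, and the gap sits exactly where you place ``the hard part''. The filtration of Theorem \ref{coinv-filtr} / Corollary \ref{coinv-submod}, combined with cuspidality of $\lambda_k$, is indeed how one pins down \emph{which} Harish--Chandra series occur (that is the content of Theorem \ref{HoweCuspidalSupport}, i.e.\ of AMR's Theorem 3.7), but it only gives $\prescript{*}{}{R}(\omega^\flat_{m,m'})$ as a module over the Levi, hence at best the dimension of $\Hom_{G_m\times G'_{m'}}\bigl(R_{\mathbf{L}_k\times\mathbf{L}'_{k'}}(\delta\otimes\delta'),\omega^\flat_{m,m'}\bigr)$; the $W_r\times W_{r'}$-action lives on the endomorphism-algebra side and is not determined by the associated graded pieces without tracking the intertwining operators through the filtration. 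Deferring that step defers the entire content of the theorem, namely the $\sgn$-twist and the dichotomy according to the parity of $k$.

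Moreover, the two concrete mechanisms you offer for the missing step are not viable. First, transporting the unipotent part of $\mathbf{R}_{\GL_l}$ (which is the diagonal $\sum_{\mu\vdash l}\pi_\mu\otimes\pi_\mu$, governed by $S_l$ since the $\GL$-factors are $\GL_l(q^2)$ with relative Weyl group $S_l$) through Howlett--Lehrer would produce level-$l$ terms of the shape $\sum_{\mu\vdash l}\Ind_{S_l\times W_{r-l}}^{W_r}(\chi_\mu\otimes 1)\otimes\Ind_{S_l\times W_{r'-l}}^{W_{r'}}(\chi_\mu\otimes 1)$, and this does not even have the same dimension as the level-$l$ term of (\ref{u1})--(\ref{u2}) (the two differ by a factor $2^{l}$), so passing from the $S_l\times S_l$-diagonal to the $W_l\times W_l$-diagonal with the $\sgn$-twist is a genuine identity requiring new input, not bookkeeping. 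Second, the sign cannot come from a ``residual'' $\det^{(q-1)/2}$ of Lemma \ref{mix-meta}: the $\flat$-normalization is designed precisely to cancel that character (this is why Corollary \ref{HoweUnipotentHC} holds and unipotent series go to unipotent series), and $\sgn\chi$ for $\chi\in\Irr(W_l)$, $W_l$ of type $\mathbf{B}_l$, is not the twist of $\chi$ by the inflation of any linear character of $\GL_l(q^2)$. In \cite{AMR} the $\sgn$ and the parity-of-$k$ case distinction come out of an explicit computation with Deligne--Lusztig characters and the Hecke-algebra parameters attached to $\lambda_k$, which your outline never engages; to make your route work you would have to supply an argument of comparable strength at precisely that point.
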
  

Let $(\mathbf{L},\rho)$ and $(\mathbf{L}',\rho')$ be as above. Since the unipotent representation $\rho_{m-l}$ of $G_{m-l}$ is also cuspidal, there exist a non-negative integer $k$ such that the cuspidal pair $(L_{m-l},\rho_{m-l})$ is equal to $(G_{m(k)}\times T_r,\lambda_k\otimes 1)$, where $r$ is equal to $m-l-m(k)$. Let $k'$ be equal to $\theta^\flat(k)$, and $r'$ to $m'-l-m(k')$. 

The following is the main result of this section, it expresses $\Omega_{m,m',\rho}$ in terms of the representation described in the previous theorem. 

\begin{thm}\label{GeneralizedWeylGroupCorrespondence}
The Weyl group $W_{\mathbf{G}_m}(\rho)$ (resp. $W_{\mathbf{G}'_{m'}}(\rho')$) is isomorphic to $W_{\mathbf{G}_\#(s)}(\rho_\#)\times W_r$ (resp. $ W_{\mathbf{G}_\#(s)}(\rho_\#)\times W_{r'}$). The ensuing bijection
$$
\Irr(W_{\mathbf{G}_m}(\rho)\times W_{\mathbf{G}'_{m'}}(\rho'))\simeq \Irr(W_{\mathbf{G}_\#(s)}(\rho_\#)\times W_{\mathbf{G}_\#(s)}(\rho_\#))\times \Irr(W_r,W_{r'}),
$$
identifies the representation $\Omega_{m,m',\rho}$ with $\mathbf{R}_{W_{\mathbf{G}_\#(s)}(\rho_\#)}\otimes \Omega_{m-l,m'-l,k}$.   
\end{thm}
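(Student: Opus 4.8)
The plan is to assemble the statement from three ingredients already at hand: the commutative diagram of Corollary \ref{CorollaryHC-HCUnipotent} (equivalently, the identification of $\omega^\flat_{m,m',\rho}$ in Theorem \ref{HC-UnipotentHC}), the Howlett--Lehrer bijection of Proposition \ref{HLBijection}, and the decomposition $\mathbf{G}_m\rightsquigarrow \mathbf{G}_\#(s)\times\mathbf{G}_{m-l}$ coming from the Lusztig correspondence. First I would establish the claimed isomorphism $W_{\mathbf{G}_m}(\rho)\simeq W_{\mathbf{G}_\#(s)}(\rho_\#)\times W_r$. By Theorem \ref{HoweHarish-Chandra} the cuspidal pair $(\mathbf{L},\rho)$ of $\mathbf{G}_m$ has the shape $L=\GL_{\mathbf t}\times T_r\times G_{m(k)}$ with $\rho=\boldsymbol\sigma\otimes 1\otimes\lambda_k$, so the relative Weyl group $N_{G_m}(\mathbf{L})/L$ splits along the block structure of $L$: the $\GL_{\mathbf t}\times T_r$-part contributes the relative Weyl group attached to $\boldsymbol\sigma\otimes 1$ inside $\mathbf{G}_\#(s)$ (after passing through the Lusztig picture via Lemma \ref{LeviCentralizer} and Proposition \ref{DecompositionCuspidalPairViaXi}), which is $W_{\mathbf{G}_\#(s)}(\rho_\#)$, while the $T_r$-factor sitting "on top of" the cuspidal-unipotent block $G_{m(k)}$ contributes a Weyl group of type $\mathbf{B}_r$, namely $W_r$; the mixed ${}^x\delta=\delta$ condition decouples because $\boldsymbol\sigma$ is a non-trivial cuspidal of $\GL$-type and $1$ is the trivial character of the torus. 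The same analysis for $\mathbf{G}'_{m'}$ gives $W_{\mathbf{G}'_{m'}}(\rho')\simeq W_{\mathbf{G}_\#(s)}(\rho_\#)\times W_{r'}$, using Proposition \ref{DecompositionCuspidalPairViaXi} to identify the first factors on the two sides.

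Next I would track $\Omega_{m,m',\rho}$ through these isomorphisms. By definition $\Omega_{m,m',\rho}$ is the representation of $W_{\mathbf{G}_m}(\rho)\times W_{\mathbf{G}'_{m'}}(\rho')$ corresponding to $\omega^\flat_{m,m',\rho}$ under the Howlett--Lehrer bijection $\Irr(W_{\mathbf{G}_m}(\rho)\times W_{\mathbf{G}'_{m'}}(\rho'))\simeq\Irr(G_m\times G'_{m'},\rho\otimes\rho')$. Theorem \ref{HC-UnipotentHC} identifies $\omega^\flat_{m,m',\rho}$ with $\mathbf{R}_{G_\#(s),\rho_\#}\otimes\omega^\flat_{m-l,m'-l,\rho_{m-l}}$ under the Lusztig-induced bijection on irreducibles. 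The key compatibility I need is that the Howlett--Lehrer bijection is itself "multiplicative" with respect to the block decomposition: the bijection $\Irr(G_m,\rho)\leftrightarrow\Irr(W_{\mathbf{G}_m}(\rho))$ factors, via $\Xi^{G_m}_s$, as the product of $\Irr(G_\#(s),\rho_\#)\leftrightarrow\Irr(W_{\mathbf{G}_\#(s)}(\rho_\#))$ and $\Irr(G_{m-l},\rho_{m-l})\leftrightarrow\Irr(W_r)$ — the latter being exactly the parametrization of $\Irr(G_{m-l})_k$ by $\Irr(W_r)$ recalled just before Theorem \ref{ReductionWeilUnipotent}. Granting this, the factor $\mathbf{R}_{G_\#(s),\rho_\#}$ (a representation of $G_\#(s)\times G_\#(s)$ whose constituents are $\pi_\#\otimes\widehat{\pi_\#}$) goes over to $\mathbf{R}_{W_{\mathbf{G}_\#(s)}(\rho_\#)}$ on the Weyl-group side, since Howlett--Lehrer sends contragredients to the appropriate dual and the regular-type representation to the regular-type representation; and the factor $\omega^\flat_{m-l,m'-l,\rho_{m-l}}=\omega^\flat_{m-l,m'-l,\lambda_k\otimes 1}$, which by definition is $\omega^\flat_{m-l,m'-l,k}$, goes over to $\Omega_{m-l,m'-l,k}$ by the very definition of the latter. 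Tensoring the two identifications gives $\Omega_{m,m',\rho}\simeq\mathbf{R}_{W_{\mathbf{G}_\#(s)}(\rho_\#)}\otimes\Omega_{m-l,m'-l,k}$, as desired.

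The main obstacle is the compatibility in the middle step: one must check that the Howlett--Lehrer (Howlett--Lehrer--Geck) parametrization of a Harish-Chandra series behaves well under the Lusztig correspondence, i.e. that the square relating $\Irr(G_m,\rho)$, $\Irr(W_{\mathbf{G}_m}(\rho))$, and their counterparts for $C_{G^*_m}(s)$ actually commutes, and moreover that it is compatible with the \emph{product} decomposition of $C_{G^*_m}(s)$ into $\mathbf{G}_\#(s)\times\mathbf{G}_{m-l}$. For the product decomposition this is essentially formal, since both the relative Weyl group and the endomorphism algebra $\End_G(R_L^G\delta)$ are multiplicative over a direct product of groups with a matching decomposition of the Levi. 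For the compatibility with Lusztig's bijection the cleanest route is to invoke Proposition \ref{LusztigHarish-Chandra}: it shows that Lusztig's bijection intertwines parabolic induction $R_L^G$ with $R_{C_L(s)}^{C_G(s)}$, hence carries the Hecke algebra $\End_G(R_L^G\rho)\simeq\mathbb{C}[W_{\mathbf{G}_m}(\rho)]$ isomorphically onto $\End_{C_G(s)}(R_{C_L(s)}^{C_G(s)}\rho_u)\simeq\mathbb{C}[W_{C_{\mathbf G}(s)}(\rho_u)]$, and this in turn is the product of the two Weyl-group algebras; one then checks the induced bijection on irreducibles is the product of Howlett--Lehrer bijections (uniqueness of $\lambda_k$ makes the second factor's twisting condition vacuous, which is what makes $W_r$ a full type-$\mathbf{B}_r$ group). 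With that compatibility in place the rest is bookkeeping.
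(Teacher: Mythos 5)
Your proposal is correct and follows essentially the same route as the paper: it composes the identification of $\omega^\flat_{m,m',\rho}$ from Theorem \ref{HC-UnipotentHC} with the Howlett--Lehrer bijection of Proposition \ref{HLBijection} applied to $G_\#(s)\times G_\#(s)$ and with the bijection of Theorem \ref{ReductionWeilUnipotent} for the unipotent part. The only difference is minor: you sketch the splitting $W_{\mathbf{G}_m}(\rho)\simeq W_{\mathbf{G}_\#(s)}(\rho_\#)\times W_r$ and the compatibility of the parametrizations directly from the block structure of the cuspidal pair, whereas the paper simply takes the composed bijection and cites Chapter 28 of \cite{Lusztig2} for the Weyl-group isomorphisms.
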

\begin{proof}
Theorem \ref{HC-UnipotentHC} states that $\omega^\flat_{m,m',\rho}$ is identified with $\mathbf{R}_{G_\#(s),\rho_\#}\otimes \omega^\flat_{m-l,m'-l,k}$ via the Lusztig correspondence
$$
\Irr(G_m\times G'_{m'},\rho\otimes\rho')\simeq \Irr(C_{G_m}(s_m)\times C_{G'_{m'}}(s'_{m'}),\rho_u\otimes \rho'_u).
$$

Proposition \ref{HLBijection} furnishes a bijection
$$
\Irr(G_\#(s)\times G_\#(s),\rho_\#\otimes\rho_\#)\simeq\Irr(W_{\mathbf{G}_\#(s)}(\rho_\#)\times W_{\mathbf{G}_\#(s)}(\rho_\#)),
$$
which clearly identifies $\mathbf{R}_{G_\#(s),\rho_\#}$ with $\mathbf{R}_{W_{\mathbf{G}_\#(s)}(\rho_\#)}$. 

Corollary \ref{HoweUnipotentHC} implies that the cuspidal representation $\rho'_{m'-l}$ is equal to $\lambda_{k'}\otimes 1$, where $r'=m'-l-m(k')$. In that case, Theorem \ref{ReductionWeilUnipotent} implies that the bijection 
$$
\Irr(G_{m-l})_k\times\Irr(G'_{m'-l})_{k'}\simeq \Irr(W_r,W_{r'}),
$$
identifies $\omega^\flat_{m-l,m'-l,k}$ with the representation $\Omega_{m-l,m'-l,k}$.  The bijection in the statement comes from composing the last three bijections. The isomorphisms between the Weyl groups come from Chapter 28 in \cite{Lusztig2}.
\end{proof}
\bibliography{Paper2bib}
\bibliographystyle{acm}
\end{document}